\documentclass{article}
\textwidth 5.99in \textheight 9.24in \topmargin -0.5in
\parskip 6pt
\oddsidemargin=0.3in \evensidemargin=0.3in

\usepackage{amsmath}
\usepackage{amssymb}
\usepackage{amsfonts}
\usepackage{latexsym,longtable}
\usepackage{graphicx}
\usepackage{tikz}
\usetikzlibrary{matrix}
\begin{document}
\newcommand{\B}{{\cal B}}
\newcommand{\D}{{\cal D}}
\newcommand{\E}{{\cal E}}
\newcommand{\F}{{\cal F}}
\newcommand{\A}{{\cal A}}
\newcommand{\Hh}{{\cal H}}
\newcommand{\Pp}{{\cal P}}
\newcommand{\Z}{{\bf Z}}
\newcommand{\T}{{\cal T}}
\newcommand{\ZZ}{{\mathbb{Z}}}
\newcommand{\qed}{\hphantom{.}\hfill $\Box$\medbreak}
\newcommand{\proof}{\noindent{\bf Proof \ }}
\renewcommand{\theequation}{\thesection.\arabic{equation}}
\newtheorem{theorem}{Theorem}[section]
\newtheorem{lemma}[theorem]{Lemma}
\newtheorem{corollary}[theorem]{Corollary}
\newtheorem{remark}[theorem]{Remark}
\newtheorem{example}[theorem]{Example}
\newtheorem{definition}[theorem]{Definition}
\newtheorem{construction}[theorem]{Construction}


\medskip
\title{New Necessary Conditions for Existence of Strong External Difference Families \thanks{Research supported by NSFC grants 11701303 (J. Bao), 12271390 (L. Ji).   {\em Corresponding author: Jingjun Bao.}
 }}

 \author{{\small   Jingjun Bao$^1$, \ Lijun Ji$^2$} \\
 {\small $^1$  The School of Mathematics and Statistics, Ningbo University, Ningbo 315211, China}\\
 {\small $^2$ Department of Mathematics, Soochow University, Suzhou, 215006, China}\\
 {\small E-mail: baojingjun@hotmail.com,\ jilijun@suda.edu.cn}\\
 }


\date{}
\maketitle
\begin{abstract}
\noindent \\ \noindent \\
Strong external difference families (SEDFs) were introduced by Paterson and Stinson as a more restrictive version of external difference families. SEDFs can be used to produce optimal strong algebraic manipulation detection codes. In this paper, we use the theory of cyclotomic fields, algebraic number theory and character theory to give some new necessary conditions for the existence of SEDFs. Based on the results of decomposition of prime ideals and Schmidt's field descent method, two exponent bounds of SEDFs are presented. Based on the field descent method, a special homomorphism from an abelian group to its cyclic subgroup and Gauss sums, some bounds for prime divisors of $v$ and some congruence relations between $k, m$ and $\lambda$ for $(v,m,k,\lambda)$-SEDFs with $m>2$ are established.

\medskip

\noindent {\bf Keywords}:  Strong external difference family, Exponent bound, Field descent method, Gauss sum

\medskip


\end{abstract}


\section{Introduction}

Motivated by applications to algebraic manipulation detection codes (or AMD codes) \cite{CDFPW2008, CFP2013, CPX2015}, Paterson and Stinson introduced strong external difference families (or SEDFs) in \cite{PS2016}. SEDFs are closely related to but stronger than  external difference families (or EDFs) \cite{OKSS2004}. In \cite{PS2016}, it was noted that optimal AMD codes can be obtained from EDFs, whereas optimal strong AMD codes can be obtained from SEDFs. See \cite{PS2016} for a discussion of these and related structures and how they relate to AMD codes.

In 2016, Martin and Stinson \cite{MS-arXiv} and Hucznska and Paterson \cite{HP2018} further investigated the existence of SEDFs and gave some new nonexistence results of certain SEDFs. Their results showed that the existence of SEDFs is an interesting mathematical problem in its own right, independent of any applications to AMD codes. In \cite{BJWZ2018} and \cite{MS-arXiv}, character theory was used to obtain some necessary conditions for the existence of SEDFs and some nonexistence results of SEDFs were presented. In \cite{JL2018}, Jedwab and Li characterized the parameters $(v,m,k,\lambda)$ of a nontrivial near-complete SEDF and gave a range of nonexistence results by using character theory and algebraic number theory. In \cite{LNC2018}, difference sets and cyclotomic classes were used to construct families of generalized SEDFs.  In \cite{HJN2019}, Huczynska et al. presented the first family of non-abelian SEDFs and gave some necessary conditions of SEDFs. In \cite{LLP2019}, Leung et al. developed various  nonexistence results of SEDFs in an abelian group $G$, where $|G|$ is a product of at most three not necessarily distinct primes. In \cite{LP2022}, Leung et al. derived some new nonexistence results for $(v,m,k,pq)$-SEDFs in an abelian group $G$, where $p$ and $q$ are primes.
Let us recall the definition of SEDFs.

Let $G$ be a finite abelian group of order $v$ (written multiplicatively) with the identity element $e\in  G$. For any two nonempty subsets $D_1,D_2$ of $G$, the multiset
$$\Delta_E(D_1,D_2)=\{xy^{-1}\colon x\in D_1, y\in D_2\}$$
is called the external difference of $D_1$ and $D_2$.
Let $k, \lambda, m$ be positive integers and let $D_1,\ldots, D_m$ be pairwise disjoint $k$-subsets of $G$. If the following multiset equation holds:
\begin{equation}
\label{SEDF11}
\bigcup\limits_{\{\ell \colon \ell\neq j\}}\Delta_E(D_{\ell},D_j)=\lambda (G\setminus \{e\})
\end{equation}
for each $1\leq j\leq m$,
then the collection $\{D_1,\ldots, D_m\}$ is called a {\bf strong external difference family} and denoted as a $(v,m,k,\lambda)$-SEDF.

When the multiset equation is replaced with $$\bigcup\limits_{\{\ell,j \colon \ell\neq j\}}\Delta_E(D_{\ell},D_j)=\lambda (G\setminus \{e\}),$$ the collection $\{D_1,\ldots, D_m\}$ is called an {\bf external difference family} and denoted as a $(v, m, k, \lambda)$-EDF. Clearly, a $(v,m,k,\lambda)$-SEDF is a $(v,m,k,m\lambda)$-EDF.

From the definition of an SEDF, it is easy to see that $m>1$, $mk\leq v$ and
\begin{equation}
\label{mk2=lambda(n-1)}
(m-1)k^2=\lambda (v-1).
\end{equation}
A $(v, m, k, \lambda)$-SEDF is trivial if $k=1$; it follows from (\ref{mk2=lambda(n-1)}) that the parameters of a trivial SEDF have the form $(v,v,1,1)$, and an SEDF with these parameters exists (trivially) in every group of order $v$. The following lemma describes the parameters and groups of the known nontrivial SEDFs.

\begin{lemma}\label{SEDFL1}
A $(v,m,k,\lambda)$-SEDF in $G$ exists in each of the following cases:

{\rm (1)} $(v,m,k,\lambda)=(k^2+1, 2,k,1)$ and $G=\mathbb{Z}_{k^2+1}$ {\rm \cite[Example 2.2]{PS2016}}.

{\rm (2)} $(v,m,k,\lambda)=(v, 2,\frac{v-1}{2},\frac{v-1}{4})$ and $v\equiv 1\pmod 4$, provided that there exists a
$(v, \frac{v-1}{2},\frac{v-5}{2},\frac{v-1}{4})$ partial difference set in $G$  {\rm \cite[Theorem 4.4]{HP2018}}.

{\rm (3)} $(v,m,k,\lambda)=(p, 2,\frac{p-1}{4},\frac{p-1}{16})$ where $p=16t^2+1$ is a prime and $t$ is an integer, and $G=\mathbb{Z}_{p}$ {\rm \cite[Theorem 4.3]{BJWZ2018}}.

{\rm (4)} $(v,m,k,\lambda)=(p, 2,\frac{p-1}{6},\frac{p-1}{36})$ where $p=108t^2+1$ is a prime and $t$ is an integer, and $G=\mathbb{Z}_{p}$ {\rm \cite[Theorem 4.6]{BJWZ2018}}.

{\rm (5)} $(v,m,k,\lambda)=(243, 11, 22, 20)$ and $G=\mathbb{F}_{3^5}$ {\rm \cite[Theorem 3.1]{JL2018}}, {\rm \cite[Section 2]{WYFF2018}}.

\end{lemma}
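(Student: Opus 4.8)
The statement collects five families of SEDFs, each already realised in the literature, so the plan is to recall an explicit family in each case and verify the defining multiset equation \eqref{SEDF11}; the parameter identity \eqref{mk2=lambda(n-1)} serves throughout as a consistency check on the sizes. A useful reduction is available whenever $m=2$ (cases (1)--(4)): here \eqref{SEDF11} asks only that $\Delta_E(D_1,D_2)=\lambda(G\setminus\{e\})$ and $\Delta_E(D_2,D_1)=\lambda(G\setminus\{e\})$, and since $\Delta_E(D_2,D_1)$ is the image of $\Delta_E(D_1,D_2)$ under the bijection $g\mapsto g^{-1}$ of $G\setminus\{e\}$, the two conditions are equivalent. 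Thus for $m=2$ it suffices to show that the single external difference multiset $\{xy^{-1}\colon x\in D_1,\,y\in D_2\}$ covers each nonidentity element exactly $\lambda$ times.

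For (1) I would exhibit two explicit disjoint $k$-subsets of $\ZZ_{k^2+1}$, namely the interval $D_1=\{0,1,\dots,k-1\}$ against the arithmetic progression $D_2=\{k,2k,\dots,k^2\}$ of step $k$, and check directly that the $k^2=v-1$ external differences $i-jk$ are pairwise distinct modulo $k^2+1$; disjointness forces all of them to be nonzero, so they exhaust $G\setminus\{e\}$ exactly once. This is the shortest and most self-contained case. For (2) the input is a Paley-type partial difference set $S$ with the stated parameters; setting $D_1=S$ and $D_2=G\setminus(S\cup\{e\})$ (whose sizes are both $\tfrac{v-1}{2}=k$), the balanced external-difference count follows from the standard character-sum (equivalently eigenvalue) computation for partial difference sets, the conference-graph restriction $v\equiv1\pmod 4$ being exactly what makes the two halves symmetric.

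Cases (3) and (4) are cyclotomic: one forms $D_1,D_2$ from unions of cyclotomic classes of index $4$ (resp.\ $6$) in $\ZZ_p^{*}$, and the external difference count is governed by the cyclotomic numbers $(i,j)$ of that order. Verifying \eqref{SEDF11} then amounts to showing these cyclotomic numbers take the constant values forced by \eqref{SEDF11}, and the Diophantine shapes $p=16t^2+1$ and $p=108t^2+1$ are precisely the conditions under which the quartic (resp.\ sextic) period equations admit the required solution. I expect this to be the main obstacle: the quartic and sextic cyclotomic numbers must be evaluated explicitly, via Gauss sums and the associated period polynomials, and the parameter constraints read off from exactly when those evaluations collapse to the needed constants. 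Finally, case (5) is a sporadic example in $\mathbb{F}_{3^5}$ with $m=11$, where the reduction above does not apply; here I would verify \eqref{SEDF11} for each of the eleven indices $j$ directly from the explicit $22$-subsets, a check most safely carried out with computer assistance.
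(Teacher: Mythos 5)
This lemma is stated in the paper without any proof: it is a catalogue of known existence results, with the proofs delegated entirely to the cited references \cite{PS2016}, \cite{HP2018}, \cite{BJWZ2018}, \cite{JL2018}, \cite{WYFF2018}. Your reconstruction is sound and, for cases (1)--(4), follows essentially the route those sources take: your $m=2$ reduction via the inversion bijection is correct and is implicit in all of them; your interval-versus-progression construction for (1) verifies correctly (the differences $i-jk$ with $0\leq i\leq k-1$, $1\leq j\leq k$ are pairwise distinct and nonzero modulo $k^2+1$, since any coincidence would force $|(j_1-j_2)k+t(k^2+1)|\leq k-1$, impossible for $t\neq 0$ and forcing $j_1=j_2$, $i_1=i_2$ for $t=0$); your treatment of (2) with $D_1=S$, $D_2=G\setminus(S\cup\{e\})$ is exactly the construction of \cite[Theorem 4.4]{HP2018}; and for (3)--(4) the sets in \cite{BJWZ2018} are single cyclotomic classes of order $4$ (resp.\ $6$), with the verification running through the quartic and sextic cyclotomic numbers just as you describe --- the shapes $p=16t^2+1$ and $p=108t^2+1$ arise from the quadratic partitions $p=x^2+4y^2$ (resp.\ $4p=L^2+27M^2$) degenerating so that the relevant character sums become constant. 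The one genuine divergence is case (5): you propose a direct (computer-assisted) check of the eleven conditions, whereas the cited proofs are theoretical --- the $D_i$ are the eleven cosets of the unique subgroup of order $22$ in $\mathbb{F}_{3^5}^{*}$ (note $242=2\cdot 11^2$), and \eqref{SEDF11} is verified by evaluating the associated Gauss/character sums for multiplicative characters of order $11$, which is feasible precisely because of the special arithmetic of $3$ modulo $11^2$. Your brute-force check would certainly settle the single sporadic case, but it buys no insight into why the parameters work; the cyclotomic argument does, and it is what both \cite{JL2018} and \cite{WYFF2018} carry out.
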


When $\lambda=1$, the parameters of a nontrivial $(v,m,k,\lambda)$-SEDF have been characterized.

\begin{lemma} {\rm \cite{PS2016}}
There exists a $(v, m, k, 1)$-SEDF if and only if $m=2$ and $v=k^2+1$.
\end{lemma}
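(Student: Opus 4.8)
The plan is to treat the `if' direction as already settled—when $m=2$ and $v=k^2+1$, the required SEDF is exhibited in Lemma~\ref{SEDFL1}(1)—and to concentrate on the `only if' direction, namely that a nontrivial $(v,m,k,1)$-SEDF (so $k\ge 2$) forces $m=2$, whence $v=k^2+1$ by the basic identity (\ref{mk2=lambda(n-1)}) with $\lambda=1$. I will work in the integral group ring $\mathbb{Z}[G]$, writing $D_j$ also for the element $\sum_{x\in D_j}x$, putting $S=\sum_{\ell}D_\ell$, and letting $D_j^{(-1)}=\sum_{x\in D_j}x^{-1}$. The defining equation (\ref{SEDF11}) with $\lambda=1$ reads $(S-D_j)D_j^{(-1)}=G-e$ for every $j$, where $G=\sum_{g}g$ and $e$ is the identity. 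Applying a nontrivial character $\chi$, writing $a_j=\chi(D_j)$ and $s=\chi(S)$, and using $\chi(G)=0$ together with $\chi(D_j^{(-1)})=\overline{a_j}$, this collapses to the scalar relation $s\,\overline{a_j}=|a_j|^2-1$ for each $j$.

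The first key step exploits that the right-hand side is real. Assume $m\ge 3$ for contradiction and suppose some nontrivial $\chi$ has $\sigma:=|s|^2>0$ (so $s\neq 0$). Then $\overline{a_j}=(|a_j|^2-1)/s$, so each $a_j$ is a real multiple of $s$; substituting back, $r_j:=|a_j|^2-1$ satisfies the single quadratic $r_j^2-\sigma r_j-\sigma=0$, independent of $j$. Hence every $a_j$ takes one of only two values $a_\pm=r_\pm s/\sigma$, where $r_\pm$ are the two roots. Summing $\sum_j a_j=s$ and using $r_++r_-=\sigma$ gives $(m-2)\sigma=(q-p)\sqrt{\sigma^2+4\sigma}$, where $p,q$ (with $p+q=m$) count how many $a_j$ equal $a_+$ and $a_-$ respectively.

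The decisive step is a rationality/integrality argument. Since $\sigma>0$ and $m\ge 3$, the left-hand side is positive, forcing $d:=q-p\ge 1$ and $\sqrt{\sigma^2+4\sigma}=(m-2)\sigma/d$; as $\sqrt{\sigma^2+4\sigma}>\sigma$ this already yields $d<m-2$, so squaring gives the positive rational $\sigma=4d^2/((m-2)^2-d^2)$. But $\sigma=\chi(SS^{(-1)})$ is an algebraic integer, hence a positive rational integer; then $\sqrt{\sigma^2+4\sigma}$ is rational, so $\sigma^2+4\sigma$ must be a perfect square. This is impossible for $\sigma\ge 1$, since $(\sigma+1)^2<\sigma^2+4\sigma<(\sigma+2)^2$ strictly. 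This contradiction shows $\chi(S)=0$ for every nontrivial $\chi$.

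It remains to convert $\chi(S)=0$ for all nontrivial $\chi$ into a contradiction. These values force $S=\tfrac{mk}{v}G$ in $\mathbb{Q}[G]$, i.e.\ every coefficient of $S$ equals $mk/v$; but $S$ is a genuine $0/1$ element (the $D_\ell$ being pairwise disjoint), so $mk/v\in\{0,1\}$, i.e.\ $mk=v$. For a nontrivial SEDF one checks $v=(m-1)k^2+1>mk$ when $k\ge 2$, a contradiction. Hence $m\ge 3$ is impossible, so $m=2$. I expect the main obstacle to be the third paragraph: the per-character relation alone admits configurations with $m\ge 4$, so the proof genuinely needs the observation that $\sigma$ is simultaneously a rational function of the integer data $(m,d)$ and an algebraic integer, combined with the elementary `consecutive squares' fact that $\sigma^2+4\sigma$ is never a perfect square for $\sigma\ge 1$.
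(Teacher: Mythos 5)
Your argument is correct, and each step checks out: the scalar relation $s\,\overline{a_j}=|a_j|^2-1$ is exactly the paper's Equation (\ref{Dj1}) with $\lambda=1$; the two-value dichotomy for the $a_j$ and the count $(m-2)\sigma=(q-p)\sqrt{\sigma^2+4\sigma}$ reproduce Equations (\ref{E1})--(\ref{E2}); the integrality of $\sigma=|\chi(S)|^2$ is Lemma \ref{integer1} (via Lemma \ref{integer r}); the consecutive-squares estimate $(\sigma+1)^2<\sigma^2+4\sigma<(\sigma+2)^2$ for integers $\sigma\ge 1$ is right; and your closing Fourier-inversion step (all coefficients of $S$ equal $mk/v$ by Lemma \ref{FIF}, forcing $v=mk$, against $v-mk=((m-1)k-1)(k-1)>0$ for $k\ge 2$, $m\ge 2$) is a correct re-derivation of Lemma \ref{integer r1} rather than a citation of it. Note, though, that the paper itself gives no proof of this lemma --- it is quoted from \cite{PS2016} --- and the original Paterson--Stinson argument is elementary double counting, using only that for $\lambda=1$ each nonidentity element occurs exactly once among the external differences into each $D_j$; no characters appear there. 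So your route is genuinely different from the cited source, though it is essentially the $\lambda=1$ specialization of the character machinery this paper assembles in Section 2; what it buys is a very short kill, since for $\lambda=1$ the rationality condition (\ref{AX}) collapses to demanding that $\sigma^2+4\sigma$ be a perfect square for a positive integer $\sigma$, which is impossible, whereas for general $\lambda$ that condition has genuine solutions and only yields the divisibility constraints of Theorem \ref{LJ20191}. One caveat you should make explicit: as literally stated, the `only if' direction fails at $k=1$, since trivial $(v,v,1,1)$-SEDFs exist in every group of order $v$ (as the paper notes right after Equation (\ref{mk2=lambda(n-1)})); your restriction to nontrivial SEDFs with $k\ge 2$ is the intended reading, but you should state that $k=1$ forces $m=v$ by (\ref{mk2=lambda(n-1)}) and is excluded by the nontriviality convention.
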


By using character theory, Martin and Stinson \cite{MS-arXiv} established various nonexistence results of SEDFs, including the following:

\begin{lemma} {\rm\cite{MS-arXiv}}
\label{m34}
 If there is a $(v,m,k,\lambda)$-SEDF with $k>1$, then $m\neq 3$ and $m\neq 4$.

\end{lemma}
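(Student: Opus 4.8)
The plan is to use character theory in the group ring $\mathbb{Z}[G]$. Writing each block as the group-ring element $D_j=\sum_{x\in D_j}x$ and setting $A=\sum_{\ell=1}^m D_\ell$, the defining equation (\ref{SEDF11}) reads $\sum_{\ell\neq j}D_\ell D_j^{(-1)}=\lambda(G-e)$ for every $j$, where $D_j^{(-1)}=\sum_{x\in D_j}x^{-1}$ and $G=\sum_{g\in G}g$. First I would apply an arbitrary nontrivial character $\chi$ of $G$, using $\chi(G)=0$, $\chi(e)=1$ and $\chi(D_j^{(-1)})=\overline{\chi(D_j)}$, to obtain for each $j$ the scalar identity
\[
\chi(A)\,\overline{\chi(D_j)}=|\chi(D_j)|^2-\lambda .
\]
Abbreviating $s=\chi(A)$ and $a_j=\chi(D_j)$, this reads $s\overline{a_j}=|a_j|^2-\lambda$, whose right-hand side is real. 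Note also that for the principal character $\chi_0$ one has $\chi_0(A)=mk$.

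Next I would analyse a fixed nontrivial $\chi$ under the assumption $s\neq 0$. Conjugating the identity gives $\overline{s}\,a_j=|a_j|^2-\lambda$ as well, so $s\overline{a_j}=\overline{s}\,a_j$ and hence $\rho_j:=a_j/s$ is real for every $j$. Substituting $a_j=\rho_j s$ into the identity shows that each $\rho_j$ is a root of the fixed quadratic $|s|^2t^2-|s|^2t-\lambda=0$; since $\lambda\ge 1$ its two real roots $\rho_{\pm}$ satisfy $\rho_{+}+\rho_{-}=1$ and $\rho_{+}\rho_{-}=-\lambda/|s|^2<0$, whence $\rho_{+}>1>0>\rho_{-}$. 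Because $\sum_j a_j=s$ we also get $\sum_j\rho_j=1$. The decisive step, and the main obstacle, is to rule this out for $m\in\{3,4\}$: if $p$ of the $\rho_j$ equal $\rho_{+}$ and the remaining $q=m-p$ equal $\rho_{-}$, then $p\rho_{+}+q\rho_{-}=1=\rho_{+}+\rho_{-}$, i.e. $(p-1)\rho_{+}+(q-1)\rho_{-}=0$. Since $\rho_{+}>0>\rho_{-}$, this forces $p-1$ and $q-1$ to share the same sign, and a short check of every admissible $(p,q)$ with $p+q=3$ or $p+q=4$ shows each case contradicts $\rho_{+}>1>0>\rho_{-}$ together with $\rho_{+}+\rho_{-}=1$ (the only balanced split $(2,2)$ would force $\rho_{+}+\rho_{-}=0$). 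Thus $s\neq 0$ is impossible when $m\in\{3,4\}$, so $\chi(A)=0$ for every nontrivial character $\chi$.

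Finally I would translate the conclusion $\chi(A)=0$ (for all nontrivial $\chi$) back to $G$. By Fourier inversion every coefficient of $A$ equals $\chi_0(A)/v=mk/v$; but the blocks are pairwise disjoint, so $A$ is a $0/1$ element of $\mathbb{Z}[G]$, which forces $mk/v=1$, i.e. $v=mk$ and $\bigcup_j D_j=G$. Feeding $v=mk$ into the basic relation (\ref{mk2=lambda(n-1)}), namely $(m-1)k^2=\lambda(mk-1)$, and using $\gcd(mk-1,k)=1$, gives $(mk-1)\mid(m-1)$; but for $k>1$ we have $mk-1>m-1>0$, which is impossible. Hence neither $m=3$ nor $m=4$ can occur when $k>1$, proving the lemma. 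I expect the genuinely delicate point to be the exclusion of $s\neq 0$: one must verify that for $m=3,4$ no distribution of the $\rho_j$ between the two roots is consistent with both $\sum_j\rho_j=1$ and $\rho_{+}+\rho_{-}=1$, whereas the same argument deliberately fails for $m=2$ (via the split $(1,1)$) and for $m\ge 5$.
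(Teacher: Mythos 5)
Your proof is correct and follows essentially the same character-theoretic route as the paper's Section 2 machinery (and Martin--Stinson's original argument): your identity $s\overline{a_j}=|a_j|^2-\lambda$, the real ratios $\rho_j$ satisfying a fixed quadratic with roots $\rho_+>1>0>\rho_-$, and the multiplicity count $p\rho_++q\rho_-=1$ are exactly Equations (\ref{Dj1})--(\ref{E2}), with your sign analysis of $(p-1)\rho_++(q-1)\rho_-=0$ equivalent to the non-integrality of $x$ in Equation (\ref{E2}) for $m\in\{3,4\}$. Your closing Fourier-inversion step for the degenerate case (all nonprincipal $\chi$ vanish on $D$, forcing $v=mk$ and then $(mk-1)\mid(m-1)$, impossible for $k>1$) simply re-proves Lemma \ref{integer r1} inline rather than citing it, so nothing essential differs.
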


\begin{lemma} {\rm\cite{MS-arXiv}}
\label{t.stinson}
If $G$ is any group of prime order, $k > 1$ and $m > 2$, then there does not exist a $(v, m, k, \lambda)$-SEDF in $G$.
\end{lemma}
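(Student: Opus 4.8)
The plan is to pass to character sums over $G$ (cyclic of prime order $v$) and then exploit a feature unavailable for general groups: the Galois group $\mathrm{Gal}(\mathbb{Q}(\zeta_v)/\mathbb{Q})$ acts transitively on the nontrivial characters of $G$. Working in the group ring $\mathbb{Z}[G]$, I would write $D = \sum_{\ell=1}^{m} D_\ell$ and $D^{(-1)} = \sum_{d \in D} d^{-1}$, so that (\ref{SEDF11}) becomes $(D - D_j)D_j^{(-1)} = \lambda(G - e)$ for each $j$. Applying a nontrivial character $\chi$ and abbreviating $a_j = \chi(D_j)$ and $s = \chi(D) = \sum_j a_j$, this reads $s\overline{a_j} - |a_j|^2 = -\lambda$ for $1 \le j \le m$. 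Since $m \ge 3$ and $k \ge 2$, the product $mk$ is composite and $mk \le v$; as $v$ is prime this forces $0 < mk < v$, so $D$ is a nonempty proper subset of $G$ and hence $s = \chi(D) \ne 0$ for every nontrivial $\chi$ (a $0/1$ sum of $v$-th roots of unity with $v$ prime vanishes only for the empty or the full set).

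Because $s\overline{a_j} = |a_j|^2 - \lambda$ is real, it equals its conjugate $\overline{s}a_j$, so each $a_j$ is a real multiple $a_j = t_j\,s/|s|$ of the common unit $s/|s|$; substituting back gives $t_j^2 - |s|\,t_j - \lambda = 0$, so every $t_j$ is one of the two fixed roots $t_\pm = \tfrac12(|s| \pm \sqrt{|s|^2 + 4\lambda})$. Writing $n_+$ for the number of indices with $t_j = t_+$ and using $\sum_j t_j = |s|$, I would derive $(m-2)|s| = (m - 2n_+)\sqrt{|s|^2 + 4\lambda}$, hence
$$|\chi(D)|^2 = |s|^2 = \frac{4\lambda\,(m-2n_+)^2}{(m-2)^2 - (m-2n_+)^2} \in \mathbb{Q},$$
with positivity forcing $2 \le n_+ < m/2$. (This already excludes $m \in \{3,4\}$, in agreement with Lemma \ref{m34}, so henceforth $m \ge 5$.)

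The crux, and the step I expect to be the main obstacle, is to convert this pointwise rationality into global rigidity. Since $|\chi(D)|^2 = \chi(DD^{(-1)})$ is an algebraic integer that we have just shown to be rational, it is a fixed integer; and because $\mathrm{Gal}(\mathbb{Q}(\zeta_v)/\mathbb{Q})$ acts transitively on the nontrivial characters — the automorphism $\sigma_t\colon \zeta_v \mapsto \zeta_v^t$ sends $\chi$ to $\chi^t$ — the identity $\sigma_t(\chi(DD^{(-1)})) = \chi^t(DD^{(-1)}) = |\chi^t(D)|^2$ shows that $|\chi(D)|^2$, and with it $t_\pm$ and $n_+$, is independent of the nontrivial character $\chi$. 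Now each $|\chi(D_j)|^2 \in \{t_+^2, t_-^2\}$ is a root of the integral quadratic $x^2 - (|s|^2 + 2\lambda)x + \lambda^2$, whose discriminant is $|s|^2(|s|^2 + 4\lambda)$, and I would split into two cases.

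If this discriminant is not a perfect square, then $t_+^2$ and $t_-^2$ are Galois-conjugate irrationals, so by transitivity the multiset $\{|\chi(D_j)|^2 : \chi \ne 1\}$ splits evenly between them; combining $\sum_{\chi \ne 1}|\chi(D_j)|^2 = k(v - k)$ and $\sum_{\chi \ne 1}|\chi(D)|^2 = mk(v - mk)$ with $(m-1)k^2 = \lambda(v-1)$ then collapses to $v = mk$, contradicting $mk < v$. If instead the discriminant is a perfect square, each $|\chi(D_j)|^2$ is rational, hence (again by transitivity) constant in $\chi$, so every $D_j$ is a difference set with the common value $k(v-k)/(v-1)$; since $t_+^2 \ne t_-^2$ this forces all blocks to share one sign, i.e. $n_+ \in \{0, m\}$, contradicting $2 \le n_+ < m/2$. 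As both cases are impossible, no such SEDF exists.
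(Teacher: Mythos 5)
Your proof is correct, and it is worth noting that the paper itself gives no proof of Lemma \ref{t.stinson}: it is quoted from \cite{MS-arXiv} without argument, so yours is a self-contained reconstruction rather than a match or mismatch with an in-text proof. The first half of your derivation coincides, up to notation, with the paper's Section 2 machinery: your identity $s\overline{a_j}-|a_j|^2=-\lambda$ is Equation (\ref{Dj1}), your collinearity $a_j=t_js/|s|$ and quadratic $t_j^2-|s|t_j-\lambda=0$ are Equations (\ref{Dj2})--(\ref{E1}), your count $(m-2)|s|=(m-2n_+)\sqrt{|s|^2+4\lambda}$ is Equation (\ref{E2}) with $n_+=x$ and $\sqrt{1+4\lambda/|s|^2}=b_\chi/a_\chi$ as in (\ref{AX}), and the integrality of $|\chi(D)|^2$ is Lemma \ref{integer1}. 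What the prime order buys you, and what the paper's general setup does not supply, are the two genuinely new steps: (i) $\chi(D)\neq 0$ for \emph{every} nontrivial $\chi$, since a $0/1$ sum of $v$-th roots of unity with $v$ prime vanishes only for $\emptyset$ or $G$, and $mk$ is composite hence $mk<v$; and (ii) the transitive action of $\mathrm{Gal}(\mathbb{Q}(\zeta_v)/\mathbb{Q})$ on nontrivial characters, which makes $|\chi(D)|^2$, hence $t_\pm$ and $n_+$, character-independent, after which Parseval closes the argument. I verified both endgames: in your Case A the three identities do collapse, via $k(v-k)=\frac{mk(v-mk)}{2}+(m-1)k^2$, to $(v-mk)(2-m)=0$, i.e. $v=mk$, a contradiction; and in Case B the per-$j$ constancy forces all $t_j^2$ equal, hence $n_+\in\{0,m\}$ against $2\le n_+<m/2$. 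One remark for streamlining: your Case A is actually vacuous. From $(m-2)|s|=(m-2n_+)R$ with $R=\sqrt{|s|^2+4\lambda}$ you already know $R/|s|=(m-2)/(m-2n_+)\in\mathbb{Q}$, so the discriminant $|s|^2(|s|^2+4\lambda)=(|s|R)^2=\left(|s|^2\cdot\frac{m-2}{m-2n_+}\right)^2$ is the square of a rational and, being a rational integer, a perfect square; equivalently, the argument of Lemma \ref{integer1} already gives $t_\pm^2=|\chi(D_j)|^2\in\mathbb{Z}$. So only Case B ever occurs, and the even-split computation, while internally valid, argues from a hypothesis that never holds; the proof reduces to the single Case B contradiction, recovering Lemma \ref{m34} along the way exactly as you observed.
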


Huczynska and Paterson \cite{HP2018} gave the following necessary conditions for the existence of SEDFs with $m \geq 3$.

\begin{lemma} {\rm\cite{HP2018}}
\label{NC2}
Suppose there exists a $(v, m, k, \lambda)$-SEDF with $m\geq 3$ and $k> \lambda \geq 2$. Then the
following inequality must hold: $$\frac{\lambda(k-1)(m-2)}{(\lambda-1)k(m-1)}\leq 1.$$
\end{lemma}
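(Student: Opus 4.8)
The plan is to encode the SEDF condition in the integral group ring $\ZZ[G]$ and then extract the inequality from a single second-moment computation, with the positivity coming from a cross-correlation count. Identifying each subset $S\subseteq G$ with the group-ring element $\sum_{x\in S}x$ and writing $S^{(-1)}=\sum_{x\in S}x^{-1}$, set $D=\sum_{i=1}^m D_i$. For $g\neq e$ let $r_j(g)$ be the number of ordered pairs $(x,y)\in D_j\times D_j$ with $xy^{-1}=g$, and let $R(g)$ be the analogous autocorrelation count for $D$. The first step is the structural identity, valid for all $g\neq e$,
$$f_j(g):=|\{(x,d)\in D_j\times D:xd^{-1}=g\}|=r_j(g)+\lambda .$$
Indeed, the $\ell=j$ contribution to $f_j(g)$ is $r_j(g)$, while the remaining contribution is the coefficient of $g$ in $\sum_{\ell\neq j}D_jD_\ell^{(-1)}$; applying the inverse map $x\mapsto x^{-1}$ to (\ref{SEDF11}) shows this last sum equals $\lambda(G-e)$, so its coefficient is $\lambda$. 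Summing the identity over the $m$ blocks also records the global relation $R(g)=m\lambda+\sum_{j'}r_{j'}(g)$.

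The heart of the argument is to evaluate $\sum_{g\in G}f_j(g)^2$ in two ways. Substituting $f_j(g)=r_j(g)+\lambda$ for $g\neq e$ and handling the identity term separately expresses this sum through $\sum_{g}r_j(g)^2$, the known total $\sum_{g\neq e}r_j(g)=k(k-1)$, and $\lambda^2(v-1)$. Independently, since $f_j(g)=|\{x\in D_j:g^{-1}x\in D\}|$, a direct double count gives $\sum_{g}f_j(g)^2=\sum_{x,x'\in D_j}R(x^{-1}x')$, because for fixed $x,x'$ the number of $g$ with both $g^{-1}x,g^{-1}x'\in D$ is exactly the autocorrelation $R(x^{-1}x')$ of $D$. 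Splitting off the diagonal $x=x'$, which contributes $mk^2$, and inserting $R(h)=m\lambda+\sum_{j'}r_{j'}(h)$ rewrites this second evaluation in terms of the cross-correlations $C_{jj'}=\sum_{h\neq e}r_j(h)r_{j'}(h)$.

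Equating the two evaluations, the diagonal terms $\sum_{g}r_j(g)^2$ cancel, and after substituting the parameter relation (\ref{mk2=lambda(n-1)}) the algebra collapses to
$$\sum_{j'\neq j}C_{jj'}=k^2(m-1)(\lambda-1)-\lambda k(k-1)(m-2).$$
Each $C_{jj'}$ counts coincidences between an internal difference of $D_j$ and one of $D_{j'}$, hence is non-negative, so the left-hand side is at least $0$; dividing by the positive quantity $(\lambda-1)k(m-1)$, whose positivity is guaranteed by $\lambda\geq 2$ and $m\geq 3$, gives the asserted bound. I expect the main obstacle to be the combinatorial double count $\sum_{g}f_j(g)^2=\sum_{x,x'\in D_j}R(x^{-1}x')$ together with the bookkeeping that makes the $\sum_{g}r_j(g)^2$ terms cancel and the remainder factor cleanly; keeping the orientation of the external-difference identity correct, via the symmetry $D_\ell D_j^{(-1)}\mapsto D_jD_\ell^{(-1)}$, is the other delicate point.
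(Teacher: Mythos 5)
Your proof is correct, and I verified the computation in full: with $f_j(e)=k$ and $R(e)=mk$ (the one place where pairwise disjointness of the $D_i$ enters), your two evaluations of $\sum_{g\in G}f_j(g)^2$ do cancel the $\sum_{h\neq e}r_j(h)^2$ terms, and after substituting $(m-1)k^2=\lambda(v-1)$ one gets exactly
\[
\sum_{j'\neq j}C_{jj'}\;=\;\sum_{j'\neq j}\sum_{h\neq e}r_j(h)r_{j'}(h)\;=\;k^2(m-1)(\lambda-1)-\lambda k(k-1)(m-2)\;\geq\;0,
\]
which with $\lambda\geq 2$ and $m\geq 3$ gives the stated bound; your inversion step $\big(\sum_{\ell\neq j}D_\ell D_j^{(-1)}\big)^{(-1)}=\sum_{\ell\neq j}D_jD_\ell^{(-1)}=\lambda(G-e)$ is also sound (commutativity is really only needed later, in the rewriting $d'd^{-1}=x^{-1}x'$ inside the double count). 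Note that the paper itself gives no proof of this lemma---it is quoted from \cite{HP2018}---so the comparison is with Huczynska and Paterson's original argument, which is a direct combinatorial count of repeated internal differences and makes genuine use of the hypothesis $k>\lambda$. Your group-ring second-moment identity never invokes $k>\lambda$, so it in fact proves the inequality under the weaker hypotheses $m\geq 3$, $\lambda\geq 2$; moreover it yields an exact identity rather than just an estimate: the slack in the inequality is precisely $\frac{1}{k}\sum_{j'\neq j}C_{jj'}$, the correlation between the internal-difference multiset of $D_j$ and those of the other blocks, structural information the counting proof does not expose, and a quantity in the same spirit as the character-sum bookkeeping (Table I and Lemma \ref{integer1}) that this paper uses elsewhere.
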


Bao et al. \cite{BJWZ2018} gave some nonexistence results of SEDFs by using character theory.

\begin{lemma} {\rm\cite{BJWZ2018}}
\label{NC2}
A $(v,m,k,\lambda)$-SEDF in $G$ does not exist in each of the following cases:

{\rm (1)} $m>4$ and $v \equiv 0\pmod k$.

{\rm (2)} $m>4$ and for any prime divisor $p$ of $v$, $m \not\equiv 2\pmod p$ whenever $\gcd(mk,p)=1$.


{\rm (3)} $(v,m,k,\lambda)=(p^2, m,k,\lambda)$, where $p$ is a prime, $m>2$ and $G=\mathbb{Z}_{p^2}$.

\end{lemma}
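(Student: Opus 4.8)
The plan is to recast the defining condition in the group ring $\mathbb{Z}[G]$ and to extract a single governing identity from the characters of $G$. Writing $D_j=\sum_{x\in D_j}x$, $D_j^{(-1)}=\sum_{x\in D_j}x^{-1}$ and $S=\sum_{\ell=1}^{m}D_\ell$, the multiset equation (\ref{SEDF11}) reads $(S-D_j)D_j^{(-1)}=\lambda G-\lambda e$ for every $j$. Applying a nontrivial character $\chi$ and using $\chi(G)=0$, $\chi(e)=1$ and $\chi(D_j^{(-1)})=\overline{\chi(D_j)}$ gives the fundamental relation
\begin{equation}
\chi(S)\,\overline{\chi(D_j)}-|\chi(D_j)|^{2}=-\lambda,\qquad 1\le j\le m.\tag{$\star$}
\end{equation}
From ($\star$) I read off a dichotomy for each nontrivial $\chi$: either $\chi(S)=0$, forcing $|\chi(D_j)|^{2}=\lambda$ for all $j$; or $\chi(S)\neq0$, in which case $\chi(S)\overline{\chi(D_j)}\in\mathbb{R}$ forces $\chi(D_j)=r_j\,\chi(S)$ for some real $r_j$, and substituting back shows $r_j^{2}-r_j-\lambda/|\chi(S)|^{2}=0$. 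Since $\sum_j r_j=1$, writing $t$ for the number of indices taking the larger root yields the rational values $r_{+}=\frac{m-t-1}{m-2t}$, $r_{-}=-\frac{t-1}{m-2t}$ and $|\chi(S)|^{2}=\frac{\lambda(m-2t)^{2}}{(t-1)(m-t-1)}$, with $2\le t<m/2$; in particular this second type of character can occur only when $m>4$.

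For part (1) no characters are needed. If $k\mid v$ then every prime dividing $k$ divides $v$ and is therefore coprime to $v-1$, so $\gcd(k^{2},v-1)=1$; feeding this into (\ref{mk2=lambda(n-1)}) gives $k^{2}\mid\lambda$ and hence $\lambda\ge k^{2}$. But for a nontrivial family ($k>1$) one has $v\ge mk>m$, so $\lambda=\frac{(m-1)k^{2}}{v-1}<k^{2}$, a contradiction.

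For part (2) I fix a prime $p\mid v$ with $\gcd(mk,p)=1$, choose a character $\chi$ of order $p$, and reduce modulo the prime ideal $\mathfrak{p}=(1-\zeta_p)$ of $\mathbb{Z}[\zeta_p]$ lying over $p$, where $\chi(D_j)\equiv k$ and $\chi(S)\equiv mk$. Since $p\nmid mk$, the alternative $\chi(S)=0$ is excluded, so $\chi$ is of the second type; reducing the integral identities $(m-2t)\chi(D_j)=(m-t-1)\chi(S)$ and $(m-2t)\chi(D_j)=-(t-1)\chi(S)$ modulo $\mathfrak{p}$ and cancelling $k$ (note $p\nmid k$) gives $(m-2)(m-t)\equiv0$ and $t(m-2)\equiv0\pmod p$. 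As $m\not\equiv2\pmod p$ by hypothesis, this forces $p\mid t$ and $p\mid(m-t)$, hence $p\mid m$, contradicting $\gcd(mk,p)=1$.

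For part (3) I first note that $m\in\{3,4\}$ is already excluded by Lemma \ref{m34}, so I may assume $m\ge5$ and use the second-type parametrisation above. The extra ingredient is the Galois action: the $p(p-1)$ characters of order $p^{2}$ form a single orbit under $\mathrm{Gal}(\mathbb{Q}(\zeta_{p^{2}})/\mathbb{Q})\cong(\mathbb{Z}/p^{2}\mathbb{Z})^{\times}$, and the $p-1$ characters of order $p$ form another; since the type, the integer $t$, the multipliers $r_j$ and the value $|\chi(S)|^{2}$ are all Galois invariant, all characters of a fixed order behave identically. When $p\nmid k$ and $p\nmid m(m-2)$ the order-$p$ characters reproduce exactly the computation of part (2) and give $p\mid m$, a contradiction. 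The genuine obstacle is the residual range $p\mid k$, $p\mid m$, or $m\equiv2\pmod p$, where the congruences degenerate; here I would invoke the Fourier-support principle (an element of $\mathbb{Q}[G]$ killed by every character of order $p^{2}$ is constant on the cosets of the subgroup $H$ of order $p$) to show that the first alternative makes $S$ a union of $H$-cosets, push the analysis down to $G/H\cong\mathbb{Z}_p$, and conclude by the prime-order nonexistence result Lemma \ref{t.stinson}; the global count $\sum_{\chi(S)\neq0}|\chi(S)|^{2}=mk(v-mk)$, together with the integrality of each $|\chi(S)|^{2}$, should eliminate the finitely many surviving numerical configurations.
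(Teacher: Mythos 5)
You should first note that the paper never proves this lemma: it is quoted verbatim from \cite{BJWZ2018}, so the only in-paper material to compare against is the Section~2 machinery, and your setup is exactly that machinery --- your identity $(\star)$ is Equation~(\ref{Dj1}), your roots $r_{\pm}$ and the count $t$ are the paper's $\alpha,\beta$ and $x$ from Equations~(\ref{E1})--(\ref{E2}), and your observation that the second type forces $2\le t<m/2$, hence $m>4$, is Lemma~\ref{m34}. Within that framework your parts (1) and (2) are essentially sound, and (1) is in fact sharper than required: $\gcd(k^2,v-1)=1$ together with (\ref{mk2=lambda(n-1)}) forces $v-1\mid m-1$, impossible once $k>1$ since $v\ge mk>m$; no characters and no hypothesis $m>4$ are needed (the restriction $k>1$ is implicit anyway, as the trivial $(v,v,1,1)$-SEDF satisfies $k\mid v$). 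Your reduction in (2) modulo $\mathfrak{p}=(1-\zeta_p)$ is correct, including the nice point that $p\nmid mk$ automatically yields $\chi(S)\not\equiv 0$, and the congruences $p\mid t(m-2)$, $p\mid(m-2)(m-t)$ do force $m\equiv 2\pmod p$. One caveat: the hypothesis of (2) is universally quantified, so it is vacuously satisfied when every prime divisor of $v$ divides $mk$; your proof silently assumes a prime $p\mid v$ with $p\nmid mk$ exists and says nothing in the degenerate case, which your method cannot reach.

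The genuine gap is part (3), which remains an outline rather than a proof, with two concrete defects. First, the proposed descent is misapplied: if every order-$p^2$ character kills $S$, then $S$ is a union of cosets of the subgroup $H$ of order $p$, but projecting the defining identity to $G/H\cong\mathbb{Z}_p$ gives $\bigl(\rho(S)-\rho(D_j)\bigr)\rho(D_j)^{(-1)}=\lambda p\,(G/H)-\lambda e$, in which the blocks $\rho(D_i)$ are multisets rather than pairwise disjoint $k$-sets; Lemma~\ref{t.stinson} applies to SEDFs in groups of prime order, not to this weighted equation, so the prime-order analysis would have to be redone from scratch. Second, the residual range $p\mid k$, $p\mid m$, or $m\equiv 2\pmod p$ is precisely where the theorem's content lives --- note that $m\equiv 2\pmod p$ is genuinely compatible with existence in other $p$-groups, witness the $(243,11,22,20)$-SEDF in $\mathbb{F}_{3^5}$ with $11\equiv 2\pmod 3$, so ruling it out must exploit the cyclic structure of $\mathbb{Z}_{p^2}$ through the order-$p^2$ characters --- and your closing claim that the Parseval count $\sum_{\chi\neq\chi_0}|\chi(S)|^2=mk(v-mk)$ plus integrality ``should eliminate the finitely many surviving numerical configurations'' is unsupported: the surviving tuples $(p,m,k,\lambda,t)$ are not finitely many a priori, and extracting a contradiction from that identity (with its $p(p-1)$ Galois-conjugate order-$p^2$ characters each contributing the common integer $\lambda(m-2t)^2/\bigl((t-1)(m-t-1)\bigr)$, by your own invariance remark, together with Lemma~\ref{integer1}) is exactly the work that \cite{BJWZ2018} actually carries out and that your proposal defers.
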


Recently, Huczynska et al. \cite{HJN2019} obatined the following nonexistence results of SEDFs.

\begin{lemma} {\rm\cite{HJN2019}}
\label{NC2}
Let $v-1$ be square-free. Then there exists no non-trivial $(v,m,k,\lambda)$-SEDF.
\end{lemma}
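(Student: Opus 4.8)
The plan is to prove the sharper arithmetic statement that the square-free hypothesis forces $k=1$; since a $(v,m,k,\lambda)$-SEDF is trivial precisely when $k=1$, this is exactly the lemma. Pleasingly, no character theory is needed here: I would use only the counting identity $(m-1)k^2=\lambda(v-1)$, the two elementary bounds $\lambda\le k$ and $mk\le v$, and the behaviour of prime factorisations under the square-free hypothesis. So suppose, for contradiction, that a nontrivial SEDF exists, which means $k\ge 2$ (and $m\ge 2$).

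First I would extract a divisibility constraint. From $(m-1)k^2=\lambda(v-1)$ we get immediately $k^2\mid\lambda(v-1)$. Fix a prime $p$ dividing $k$ and compare $p$-adic valuations $v_p(\cdot)$: the identity gives $v_p(\lambda)+v_p(v-1)\ge 2\,v_p(k)$. Here the square-free hypothesis enters decisively, since it forces $v_p(v-1)\le 1$ for every prime $p$. Hence $v_p(\lambda)\ge 2\,v_p(k)-1\ge v_p(k)$, the last inequality using $v_p(k)\ge 1$. As this holds for every prime divisor of $k$ (and is vacuous for primes not dividing $k$), I conclude $k\mid\lambda$, and in particular $\lambda\ge k$.

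Next I would bound $\lambda$ from above by a direct count. Fix an index $j$ and a nonidentity element $g\in G$, and write $D=\bigcup_{\ell}D_\ell$. In the defining equation $\bigcup_{\ell\ne j}\Delta_E(D_\ell,D_j)=\lambda(G\setminus\{e\})$ the element $g$ occurs exactly $\lambda$ times, and each occurrence comes from a pair $(x,y)$ with $y\in D_j$ and $x=gy\in D\setminus D_j$ (using pairwise disjointness). Since $x$ is determined by $y$ and $y$ ranges over the $k$-set $D_j$, there are at most $k$ such pairs, so $\lambda\le k$. Combined with $\lambda\ge k$ from the previous step, this gives $\lambda=k$. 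Substituting into $(m-1)k^2=\lambda(v-1)$ yields $(m-1)k=v-1$, i.e. $v=(m-1)k+1$, and feeding this into $mk\le v$ gives $mk\le(m-1)k+1$, that is $k\le 1$, contradicting $k\ge 2$. Hence no nontrivial SEDF exists when $v-1$ is square-free.

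I expect the delicate points to be bookkeeping rather than depth: one must apply the valuation argument to \emph{every} prime so that $k\mid\lambda$ genuinely holds (not merely $p\mid\lambda$ for a single $p$), and one must justify the upper bound $\lambda\le k$ cleanly from the multiset equation. The conceptual heart, and the one place the hypothesis is indispensable, is the passage from the automatic divisibility $k^2\mid\lambda(v-1)$ to $k\mid\lambda$: this step collapses entirely once $v-1$ is permitted a square factor, consistent with the fact that the known nontrivial families have $v-1$ non-square-free (for instance the $(k^2+1,2,k,1)$-SEDF, where $v-1=k^2$).
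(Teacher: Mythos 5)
Your proposal is correct, but a caveat on the comparison: the paper you are being checked against does not prove this lemma at all — it is stated as a quoted result from Huczynska, Jefferson and Nepsinska \cite{HJN2019} — so there is no in-paper proof to match; your argument is in the same elementary spirit as the original source, resting on the identity $(m-1)k^2=\lambda(v-1)$ rather than on any of the character-theoretic machinery developed in this paper. All three of your steps check out. The divisibility $k^2\mid\lambda(v-1)$ is immediate since $\lambda(v-1)$ literally equals $(m-1)k^2$; the valuation step $v_p(\lambda)\ge 2v_p(k)-v_p(v-1)\ge v_p(k)$ for every prime $p\mid k$ uses square-freeness exactly where claimed (via $v_p(v-1)\le 1$) together with $v_p(k)\ge 1$, and running it over all primes does give $k\mid\lambda$, hence $\lambda\ge k$. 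The double count of a fixed $g\neq e$ in $\bigcup_{\ell\neq j}\Delta_E(D_\ell,D_j)$ is sound: pairwise disjointness of the $D_\ell$ means $x=gy$ lies in at most one block $D_\ell$ with $\ell\neq j$, so each $y\in D_j$ contributes at most one occurrence and $\lambda\le k$. The endgame $\lambda=k\Rightarrow v=(m-1)k+1\Rightarrow mk\le (m-1)k+1\Rightarrow k\le 1$ is a clean contradiction with $k\ge 2$, and you correctly invoke $mk\le v$, which holds because the $D_i$ are $m$ pairwise disjoint $k$-subsets of $G$. One available streamlining: the combinatorial count is dispensable, since $mk\le v$ and the identity already give $\lambda=\frac{(m-1)k^2}{v-1}\le\frac{(m-1)k^2}{mk-1}<k$ strictly whenever $k\ge 2$ (the last inequality being equivalent to $k^2>k$), which collides with $k\mid\lambda$ at once, with no need to analyse the equality case $\lambda=k$.
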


In \cite{LLP2019}, Leung et al. developed some new nonexistence results of SEDFs as follows.

\begin{lemma} {\rm\cite{LLP2019}}
\label{NC2}
A $(v,m,k,\lambda)$-SEDF in an abelian $G$ does not exist in each of the following cases:

{\rm (1)} $(v,m,k,\lambda)=(pqr, m,k,\lambda)$, where $p, q, r$ are three distinct primes, $m>2$ and $G=\mathbb{Z}_{pqr}$.

{\rm (2)} $(v,m,k,\lambda)=(pq^n, m,k,\lambda)$, where $p, q$ are two distinct primes, $m>2$ and $n\in \{1,2\}$.

{\rm (3)} $(v,m,k,\lambda)=(p^n, m,k,\lambda)$, where $p$ is a prime, $m>2$ and $G=\mathbb{Z}_{p^n}$.

{\rm (4)} $(v,m,k,\lambda)=(p^3, m,k,\lambda)$, where $p$ is a prime, $m>2$ and $G=\mathbb{Z}_{p}\times\mathbb{Z}_{p^2}$.

{\rm (5)} $(v,m,k,\lambda)=(p^3, m,k,\lambda)$ with $m>2$ and $p$ being a prime less than $3\times 10^{12}$, where $G=\mathbb{Z}_{p}\times\mathbb{Z}_{p}\times\mathbb{Z}_{p}$.

\end{lemma}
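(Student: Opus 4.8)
The plan is to translate the SEDF condition into the group ring $\mathbb{Z}[G]$ and then apply the characters of the abelian group $G$. Writing $A=\sum_{\ell=1}^m D_\ell$, the defining equation (\ref{SEDF11}) becomes $AD_j^{(-1)}-D_jD_j^{(-1)}=\lambda(G-e)$ in $\mathbb{Z}[G]$ for every $j$. Applying a nontrivial character $\chi$ and using $\chi(G)=0$ together with $\chi(D_j^{(-1)})=\overline{\chi(D_j)}$, I obtain the single scalar identity
\[
\chi(A)\,\overline{\chi(D_j)}-|\chi(D_j)|^2=-\lambda \qquad (1\le j\le m).
\]
This already yields the key dichotomy on which the whole argument rests: either $\chi(A)=0$, in which case $|\chi(D_j)|^2=\lambda$ for all $j$; or $\chi(A)\neq0$, in which case $\chi(A)\,\overline{\chi(D_j)}\in\mathbb{R}$ forces each $\chi(D_j)$ to be a real multiple $t_j\,\chi(A)$ of $\chi(A)$, and the scalars $t_j$ are then roots of the fixed quadratic $|\chi(A)|^2t^2-|\chi(A)|^2t-\lambda=0$, so they take at most two values summing over $j$ to $1$.

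First I would record the global constraints. Parseval's relation gives $\sum_{\chi\neq1}|\chi(D_j)|^2=k(v-k)$ for each $j$ and $\sum_{\chi\neq1}\chi(D_i)\overline{\chi(D_j)}=-k^2$ for $i\neq j$, while (\ref{mk2=lambda(n-1)}) ties $m,k,\lambda$ to $v$; together with Lemma~\ref{m34} I may assume $m\ge5$. The heart of the matter is that the character values are algebraic integers in cyclotomic fields $\mathbb{Q}(\zeta_d)$ with $d\mid v$, and the relation $|\chi(D_j)|^2=\lambda$ is exactly of the type controlled by Schmidt's field descent and by the decomposition of the primes dividing $v$ in these fields. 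My strategy is therefore to localize at the primes $p,q,r$: for a character $\chi$ whose order is a power of a single prime, the relation $|\chi(D_j)|^2=\lambda$ either is impossible (forcing $\chi(A)\neq0$ and pushing the $\chi(D_j)$ into the rigid two-value configuration above) or forces $p$-adic valuation conditions on $\lambda$ that must be reconciled with $(m-1)k^2=\lambda(v-1)$ and the Parseval totals.

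For the cyclic cases (1)–(3), where $G=\mathbb{Z}_{pqr}$, $\mathbb{Z}_{pq^n}$ or $\mathbb{Z}_{p^n}$, the field $\mathbb{Q}(\zeta_v)$ is the compositum of its prime-power pieces $\mathbb{Q}(\zeta_{p^a})$, and I would treat each prime separately. Characters of $p$-power order take values in $\mathbb{Q}(\zeta_{p^a})$, where a single rational prime lies over $p$ and is totally ramified; applying the field descent to $|\chi(D_j)|^2=\lambda$ then yields valuation conditions that, combined with the counting identities, are incompatible unless the configuration degenerates to $m=2$ or to a prime-order group already excluded by Lemma~\ref{t.stinson}. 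Running this at each of $p,q$ (and $r$), with the mild extra care needed when a prime occurs to higher power as in $q^n$ or $p^n$, should close (1)–(3); the same bookkeeping adapted to the two invariant factors of $\mathbb{Z}_p\times\mathbb{Z}_{p^2}$ should give (4).

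The genuinely hard case is (5), $G=\mathbb{Z}_p\times\mathbb{Z}_p\times\mathbb{Z}_p$, where every nontrivial character has order exactly $p$, so the layer-by-layer descent available in the cyclic setting collapses: all $p^3-1$ nontrivial characters live in the single field $\mathbb{Q}(\zeta_p)$, and $|\chi(D_j)|^2=\lambda$ becomes a statement about Gauss-sum-like integers there whose only descent consequence is a size bound rather than an outright contradiction. I expect this to be the crux. The field descent together with a Gauss-sum estimate should bound $p$ from above by an explicit constant, reducing (5) to checking that no admissible parameter set $(p^3,m,k,\lambda)$ survives for any prime $p<3\times10^{12}$, which is then dispatched by a computer-assisted exhaustion over that range. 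The real work lies precisely in making the analytic bound explicit enough to render the search finite and in organizing that search.
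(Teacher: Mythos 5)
This statement is not proved in the paper at all: it is Lemma~1.9, quoted verbatim from Leung, Li and Prabowo \cite{LLP2019} as background, so the only fair comparison is against the actual argument in that reference. Your opening reductions are correct and are indeed the standard machinery (they coincide with the paper's Section~2: your identity $\chi(A)\overline{\chi(D_j)}-|\chi(D_j)|^2=-\lambda$ is Equation~(2.3), your two-valued scalars $t_j$ are the $\alpha,\beta$ of Equation~(2.5), and your Parseval sums check out). But from that point on what you have is a plan, not a proof. For cases (1)--(4) you assert that prime-by-prime field descent ``yields valuation conditions that, combined with the counting identities, are incompatible,'' with no derivation. This step would not go through as stated: the same local/valuation data is available in $\mathbb{Z}_p\times\mathbb{Z}_p\times\mathbb{Z}_p$, where nonexistence is \emph{not} known in general, so valuation conditions plus counting cannot by themselves be the engine. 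What is actually needed --- and what occupies the 27 pages of \cite{LLP2019} --- includes the rational-integrality and divisibility constraints on the character data (in this paper's notation, Theorem~\ref{LJ20191}: $b_{\chi}\mid(m-2)$, $(b_{\chi}^2-a_{\chi}^2)\mid 4\lambda$, etc.), a count of how the nonprincipal characters distribute among $\widehat{G}^{0}$, $\widehat{G}^{+}$, $\widehat{G}^{-}$, and a lengthy group-structure-dependent case analysis; none of this appears in your sketch even in outline.

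There is also a concrete logical error in your treatment of case (5). You propose that field descent plus a Gauss-sum estimate ``should bound $p$ from above by an explicit constant, reducing (5) to checking \ldots for any prime $p<3\times 10^{12}$.'' If such an analytic bound existed (and were at most $3\times 10^{12}$), the conclusion would hold for \emph{all} primes and the hypothesis $p<3\times 10^{12}$ would be superfluous. The restriction appears in the statement precisely because no analytic upper bound on $p$ is known: in \cite{LLP2019} the elementary abelian case is reduced to arithmetic conditions that are then eliminated by a computer search over primes up to $3\times 10^{12}$, and the problem remains open beyond that range. So the bound is the ceiling of the computation, not its launching point, and the ``explicit constant'' you are counting on is exactly the missing ingredient that makes this case hard.
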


Very recently, Leung et al. derived some new nonexistence results of SEDFs.

\begin{lemma} {\rm\cite{LP2022}}
\label{NC20}
A $(v,m,k,\lambda)$-SEDF in an abelian $G$ does not exist in each of the following cases:

{\rm (1)} $(v,m,k,\lambda)=(v, m,k, p^2)$, where $p$ is a prime and $m>2$.

{\rm (2)} $(v,m,k,\lambda)=(v, m,k, 2p)$, where $p$ is a prime and $m>2$.

{\rm (3)} $(v,m,k,\lambda)=(v, m,k, pq)$, where  $m>2$ and $p,q$ are primes with $q<p<\frac{q^3}{2}$ and $q<200$.

{\rm (4)} $(v,m,k,\lambda)=(v, m,k, pq)$, where  $m>2$ and $p,q$ are primes with $q<p$ and $q=3, 5, 7, 13, 19, 31$.

\end{lemma}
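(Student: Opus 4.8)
The plan is to translate the defining multiset equation (\ref{SEDF11}) into the group ring $\mathbb{Z}[G]$ and attack it with characters, exactly as in the cited works. Writing $S=\sum_{\ell=1}^{m}D_\ell$ and $D_j^{(-1)}=\sum_{x\in D_j}x^{-1}$, condition (\ref{SEDF11}) becomes $(S-D_j)D_j^{(-1)}=\lambda(G-e)$ in $\mathbb{Z}[G]$ for every $j$. Applying a nontrivial character $\chi$ and using $\chi(G)=0$ yields the fundamental relation $\overline{\chi(D_j)}\,(\chi(D_j)-\chi(S))=\lambda$ for each $j$ and each nontrivial $\chi$. Setting $a_j=\chi(D_j)$, $\sigma=\chi(S)$, and comparing this identity with its complex conjugate shows that $r_j:=a_j\overline{\sigma}$ is \emph{real}; since $a_j,\sigma$ are algebraic integers, so is $r_j$.

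Next I would extract the arithmetic backbone. Writing $s=|\sigma|^2$, the fundamental relation gives $|a_j|^2=\lambda+r_j$ and $|a_j|^2=r_j^2/s$, hence each $r_j$ is a real algebraic integer satisfying the fixed quadratic $r_j^2-s\,r_j-s\lambda=0$. Thus $r_j\in\{r_+,r_-\}$ with $r_++r_-=s$ and $r_+r_-=-s\lambda$. Counting the number $t$ of indices with $r_j=r_+$ and using $\sum_j r_j=s$ forces $s=\frac{\lambda(m-2t)^2}{(t-1)(m-t-1)}$, so $s$ is rational and therefore an integer; this also re-derives $2\le t\le m-2$, i.e. $m\ge 5$ (recovering Lemma \ref{m34} for $m\in\{3,4\}$). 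Consequently the quantities $\mu_\pm:=\lambda+r_\pm$ are positive integers obeying the key identity $\mu_+\mu_-=\lambda^2$, and for each nontrivial $\chi$ the multiset $\{\,|\chi(D_j)|^2:j\,\}$ consists of $t$ copies of $\mu_+$ and $m-t$ copies of $\mu_-$ (with $\chi(S)=0$ degenerating to $|\chi(D_j)|^2=\lambda$ for all $j$).

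The special shapes of $\lambda$ now enter through $\mu_+\mu_-=\lambda^2$. For $\lambda=p^2$ the only admissible unordered pairs are $\{p^4,1\}$ and $\{p^3,p\}$; for $\lambda=2p$ and $\lambda=pq$ one lists the (few) factorizations of $4p^2$ and $p^2q^2$. For each surviving pair I would invoke Schmidt's field descent method to decide whether an algebraic integer $\chi(D_j)$, for $\chi$ of prime-power order, can actually attain $|\chi(D_j)|^2$ equal to the prescribed prime power: the self-conjugacy of $p$ (or $q$) modulo the order of $\chi$, together with Kronecker's theorem in the extreme case $\mu=1$ (which forces $\chi(D_j)$ to be a root of unity and hence $D_j$ to be degenerate), eliminates most configurations. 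The configurations that remain are then tested against the two global identities $(m-1)k^2=\lambda(v-1)$ from (\ref{mk2=lambda(n-1)}) and $\sum_{\chi\ne\chi_0}|\chi(S)|^2=mk(v-mk)$ (Parseval applied to $S$), which pin down $t$, $m$ and the prime divisors of $v$; Gauss sums serve to evaluate the relevant character sums on prime-order subgroups, and the homomorphism onto a cyclic subgroup reduces everything to characters of prime-power order.

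The hard part, I expect, lies in parts (3) and (4), where $\lambda=pq$ and $\mu_+\mu_-=p^2q^2$ admits several divisor splittings whose realizability hinges delicately on the multiplicative orders of $p$ and $q$ modulo the exponent of $G$. The explicit hypotheses $q<p<q^3/2$ with $q<200$ in (3), and the finite list $q\in\{3,5,7,13,19,31\}$ in (4), strongly suggest that after the descent reduces each case to a finite search, the surviving possibilities are killed by a computer-assisted verification rather than by one uniform inequality. The principal obstacle is therefore to bound the exponent of $G$ and the order of the relevant characters effectively, so that only finitely many triples $(v,m,k)$ remain to be checked; making that bound explicit via the field descent function and Gauss-sum estimates is where the genuine difficulty — and the dependence on the specific numerical conditions on $p$ and $q$ — is concentrated.
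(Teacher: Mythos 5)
You should first be aware that the paper you are working against contains no proof of this statement at all: Lemma \ref{NC20} is quoted verbatim from the cited work of Leung and Prabowo, so there is no internal argument to compare with line by line. What can be compared is your setup against the paper's Section 2 machinery, and there your backbone is correct and essentially reproduces it: your fundamental relation is Equation (\ref{Dj1}), your $r_j=\chi(D_j)\overline{\chi(D)}$ being real and your quadratic $r_j^2-sr_j-s\lambda=0$ is a repackaging of Equations (\ref{Dj2})--(\ref{E1}), your counting identity $s=\frac{\lambda(m-2t)^2}{(t-1)(m-t-1)}$ is consistent with Equation (\ref{E2}), the integrality of $s$ and of $\mu_\pm$ is Lemma \ref{integer1}, and the key identity $\mu_+\mu_-=\lambda^2$ is exactly the product of the two entries $\frac{(b_\chi+a_\chi)\lambda}{b_\chi-a_\chi}$ and $\frac{(b_\chi-a_\chi)\lambda}{b_\chi+a_\chi}$ in Table I. Your use of Kronecker's theorem in the case $\mu_-=1$ is also legitimate, since complex conjugation commutes with every element of the abelian Galois group of a cyclotomic field, so $|\chi(D_j)|^2=1$ does propagate to all conjugates.

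The genuine gap is that everything after this backbone is a plan rather than a proof, and the plan as stated would fail on parts (1) and (2). You propose to eliminate the surviving factorizations $\{p^4,1\}$, $\{p^3,p\}$ (and the analogues for $2p$, $pq$) by "the self-conjugacy of $p$ (or $q$) modulo the order of $\chi$" together with field descent; but parts (1) and (2) assert nonexistence in \emph{every} abelian group $G$, with no hypothesis whatsoever relating $p$ to $v$, and self-conjugacy of $p$ modulo $\exp(G)$ simply fails for generic $G$. This is precisely the difficulty that makes the cited result nontrivial: the unconditional cases require the Leung--Schmidt norm-equation and anti-field-descent machinery for solutions of $X\overline{X}=N$ in $\mathbb{Z}[\zeta_{p^a}]$ without self-conjugacy assumptions --- the kind of results this paper imports as Theorems \ref{Kv^2} and \ref{N^2+N+1} --- together with a delicate case analysis of which divisor pairs $(\mu_+,\mu_-)$ are simultaneously realizable for the \emph{same} character (both values must occur, since $2\le t\le m-2$), combined with the global constraints you list. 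Your proposal never identifies this ingredient, and it is exactly where the numerical hypotheses $p<\frac{q^3}{2}$, $q<200$ and the list $q\in\{3,5,7,13,19,31\}$ originate; deferring them to "a computer-assisted verification" of an unspecified finite search concedes the substance of all four parts rather than proving them.
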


From now on, we shall use $G_p$ to denote the Sylow $p$-subgroup of the group $G$, where $p$ is a prime. Let $g\in G$, we use $\text{ord}(g)$ to denote the order of $g$. The {\bf exponent} of a finite abelian group $G$ is defined as the least common multiple of the orders of all elements of the group $G$. We use $ \text{exp}(G)$ to denote the exponent of the group $G$.

In 2019, Jedwab and Li \cite{JL2018} gave the following exponent bound for SEDFs with $m \geq 2$.

\begin{lemma} {\rm\cite{JL2018}}
\label{}
Suppose there exists a $(v, m, k, \lambda)$-SEDF in a group $G$. Let $p$ and $q$ be primes such that $p^d||v$ and $q^f ||\lambda$ for some positive integers $d$ and $f$, and suppose that $q$ is a primitive root modulo $p^d$. Let $G_p$ be the Sylow $p$-subgroup of $G$. Then
$$ \text{exp}(G_p) \leq \frac{v}{q^{\lceil \frac{f}{2} \rceil}}.$$
\end{lemma}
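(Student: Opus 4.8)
The plan is to combine the character-theoretic translation of the SEDF condition with the arithmetic of the cyclotomic field $\mathbb{Q}(\zeta_{p^a})$, and then to convert a divisibility of a character sum back into a lower bound on the order of a subgroup. First I would write the defining equation (\ref{SEDF11}) in the group ring $\mathbb{Z}[G]$ as $(D-D_j)D_j^{(-1)}=\lambda(\widehat{G}-e)$ for each $j$, where $D=\sum_{\ell=1}^{m}D_\ell$, $R_j=\sum_{\ell\neq j}D_\ell=D-D_j$ and $\widehat{G}=\sum_{g\in G}g$. Put $p^{a}=\mathrm{exp}(G_p)$ (so $a\le d$) and pick a character $\chi$ of $G$ of order exactly $p^{a}$; let $U=\ker\chi$, so that $G/U$ is cyclic of order $p^{a}$ and $|U|=v/p^{a}$. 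Applying $\chi$ and using $\chi(\widehat{G})=0$ gives, for every $j$, the relation $(\chi(D)-\chi(D_j))\,\overline{\chi(D_j)}=-\lambda$, equivalently $\chi(R_j)\,\overline{\chi(D_j)}=-\lambda$. Since $\lambda\ge 1$, this already shows $\chi(D_j)\neq 0$ and $\chi(R_j)\neq 0$.

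Next I would exploit the hypothesis on $q$. As $q$ is a primitive root modulo $p^{d}$ it is one modulo $p^{a}$, so the order of $q$ in $(\mathbb{Z}/p^{a}\mathbb{Z})^{\times}$ equals $\phi(p^{a})=[\mathbb{Q}(\zeta_{p^{a}}):\mathbb{Q}]$; hence $q$ is inert and $Q:=(q)$ is a prime ideal of $\mathbb{Z}[\zeta_{p^{a}}]$ with $v_Q$ restricting to $v_q$ on $\mathbb{Z}$. Being the unique prime above $q$, the ideal $Q$ is fixed by the whole Galois group, in particular by complex conjugation, so $q$ is self-conjugate in the sense $v_Q(\overline{x})=v_Q(x)$ for all $x$. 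Applying $v_Q$ to $\chi(R_j)\,\overline{\chi(D_j)}=-\lambda$ and writing $\alpha_j=v_Q(\chi(D_j))$, $\beta_j=v_Q(\chi(R_j))$ then gives $\alpha_j+\beta_j=v_Q(\lambda)=f$, because $q^{f}$ exactly divides $\lambda$. Consequently $\max(\alpha_j,\beta_j)\ge\lceil f/2\rceil$: for each $j$, at least one of $\chi(D_j),\chi(R_j)$ is divisible by $q^{\lceil f/2\rceil}$.

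The crux is then a descent turning this divisibility into the bound. Fix a $j$ with, say, $q^{\lceil f/2\rceil}\mid\chi(D_j)$, and push $D_j$ into $\mathbb{Z}[G/U]\cong\mathbb{Z}[\mathbb{Z}_{p^{a}}]$ via $\pi\colon G\to G/U$. Its coefficients are the coset-intersection numbers $n_i=|D_j\cap\pi^{-1}(i)|$, which satisfy $0\le n_i\le |U|$ (they are genuine intersection sizes because the $D_\ell$ are disjoint), and $\chi(D_j)=\sum_{i=0}^{p^{a}-1}n_i\zeta^{i}$ for a primitive $p^{a}$-th root $\zeta$. Writing this in the integral basis $\{\zeta^{i}:0\le i<\phi(p^{a})\}$ and using $\sum_{t=0}^{p-1}\zeta^{tp^{a-1}}=0$, the condition $q^{\lceil f/2\rceil}\mid\chi(D_j)$ (with $q$ inert) is equivalent to $n_i\equiv n_{i'}\pmod{q^{\lceil f/2\rceil}}$ whenever $i\equiv i'\pmod{p^{a-1}}$. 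If every such fibre were constant then the same relation $\sum_{t}\zeta^{tp^{a-1}}=0$ would force $\chi(D_j)=0$, contradicting the non-vanishing established above; hence some fibre contains two distinct integers of $[0,|U|]$ that are congruent modulo $q^{\lceil f/2\rceil}$, which yields $|U|\ge q^{\lceil f/2\rceil}$. The identical argument applies verbatim to $R_j$, whose coset-intersection numbers also lie in $[0,|U|]$. Since for each $j$ one of $\chi(D_j),\chi(R_j)$ is divisible by $q^{\lceil f/2\rceil}$, we obtain $q^{\lceil f/2\rceil}\le|U|=v/p^{a}$, i.e. $\mathrm{exp}(G_p)=p^{a}\le v/q^{\lceil f/2\rceil}$.

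The hard part is this final descent: converting the algebraic divisibility of the cyclotomic integer $\chi(D_j)$ into the combinatorial inequality $|U|\ge q^{\lceil f/2\rceil}$. It rests on two facts that must be handled carefully — the non-vanishing of both $\chi(D_j)$ and $\chi(R_j)$, and the explicit description of $q^{c}\mathbb{Z}[\zeta_{p^{a}}]$ in the integral basis — and it is precisely here that the inertness and self-conjugacy of $q$ (forced by its being a primitive root modulo $p^{d}$) enter in an essential way.
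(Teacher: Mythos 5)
Your proof is correct, and it takes a genuinely different route from the paper's. The paper obtains this lemma as Corollary \ref{primitive11}, by specializing its general exponent bound Theorem \ref{SEDF1} (with $\lambda'=q^f$ and $d=\text{exp}(G_p)$): that proof splits into the cases $\chi(D)=0$ and $\chi(D)\neq 0$, extracts the divisibility of $\chi(D_j)$ by a suitable power of $q$ from Lemma \ref{X1} via the factorization of $(q)$ into self-conjugate prime ideals, and converts it into the bound through Schmidt's integral-basis estimate (Lemma \ref{BSC}, Corollary \ref{c1}); the case $\chi(D)\neq 0$ leans on Lemma \ref{integer1} and the rational structure $\chi(D_j)=\frac{a_\chi\pm b_\chi}{2a_\chi}\chi(D)$ from Equation (\ref{AX}), together with the $u_1,\ldots,u_4$ gcd bookkeeping, which is why the paper's version carries the hypothesis $m\geq 3$. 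You avoid the case split altogether: the single identity $\chi(R_j)\overline{\chi(D_j)}=-\lambda$ gives non-vanishing of both factors, and since $q$ is inert in $\mathbb{Z}[\zeta_{p^a}]$ with $(q)$ fixed by complex conjugation, additivity of the $Q$-adic valuation yields $v_Q(\chi(D_j))+v_Q(\chi(R_j))=f$, so one factor is divisible by $q^{\lceil f/2\rceil}$; your closing fibre-congruence argument on the coefficients of the projection to the cyclic quotient $G/\ker\chi$ is exactly the prime-power ($t=1$) instance of Corollary \ref{c1}, slightly sharpened by comparing two congruent coefficients in $[0,|U|]$ rather than bounding a single one. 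What each buys: your argument (essentially Jedwab and Li's original one) works for every $m\geq 2$ and hence proves the statement as given, whereas the paper's own machinery only covers $m\geq 3$; conversely, the detour through Theorem \ref{SEDF1} pays for that restriction with extra generality (composite $d$, composite self-conjugate $\lambda'$, the $2^{t-1}$ factor). One cosmetic remark: inertness of $q$ is needed only for the valuation step ($v_Q(\lambda)=f$ and conjugation-invariance of $Q$); the coefficientwise description of divisibility by $q^{\lceil f/2\rceil}$ in the basis $\{\zeta^i:0\le i<\phi(p^a)\}$ holds for any rational integer, since $\mathbb{Z}[\zeta_{p^a}]$ is a free $\mathbb{Z}$-module on that basis.
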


In this paper, we present some nonexistence results for nontrivial SEDFs by using character theory, algebraic number theory and the theory of cyclotomic fields. In Section 2, we give some necessary definitions and notations, and some properties of SEDFs. In Section 3, we use the knowledge on decomposition of prime ideals and integer basis of cyclotomic fields to obtain an exponent bound for SEDFs and apply it to rule out some SEDFs with $m>2$. In Section 4, we use the field descent method, a special homomorphism from a group to its cyclic subgroup and Gauss sums to establish some bounds for prime divisors of $v$ and some congruence relations between $k, m$ and $\lambda$ for $(v,m,k,\lambda)$-SEDFs with $m>2$. We apply them to rule out various SEDFs with $m>2$. In Section 5, we use Schmidt's theorem to get another exponent bound for SEDFs and apply it to rule out several SEDFs with $m>2$.

\section{Preliminaries}

In the next section, we will use some tools of algebraic number theory and the theory of cyclotomic fields to prove the nonexistence results of SEDFs. Now we introduce some definitions and notations in group rings.

Let $\mathbb{Z}$ denote the ring of integers. The group ring $\mathbb{Z}[G]$ is defined to be the ring of formal sums
$$\mathbb{Z}[G]=\left \{ \sum\limits_{g\in G} a_gg\colon a_g\in \mathbb{Z}\right \},$$
where the addition is given by
$$\sum\limits_{g\in G}a_gg+\sum\limits_{g\in G}b_gg=\sum\limits_{g\in G}(a_g+b_g)g,$$
and the multiplication is defined by
$$\left (\sum\limits_{g\in G}a_gg\right )\left (\sum\limits_{g\in G}b_gg\right )=\sum\limits_{h\in G}\left (\sum\limits_{g\in G}a_gb_{hg^{-1}} \right )h.$$
If $S$ is a multiset of $G$ where for $g\in G$, $g$ occurs in $S$ exactly $a_g$ times, we will identify $S$ with the group ring element $S=\sum_{g\in G}a_gg$.  
Let $t$ be an integer. For $X=\sum_{g\in G}a_gg\in \mathbb{Z}[G]$, we write $X^{(t)}=\sum_{g\in G}a_gg^t$.

For a finite abelian group $G$, there are exactly $n = |G|$ distinct homomorphisms (called {\em characters}) from $G$ to the multiplicative group of complex numbers. In particular, the character $\chi_0\colon G\rightarrow \mathbb{C}$ defined by $\chi_0(g)=1$ for all $g\in G$ is called the {\bf principal character}. When $G$ is abelian, the product of characters $(\chi\psi)(g) =\chi(g)\psi(g)$ is again a character and provided $G$ is finite, this gives us a group $\widehat{G}$ of characters isomorphic to $G$. So we can label the $|G|$ distinct characters as $\chi_g$, where $g$ runs through all elements of $G$.

Each $\chi$ extends to an algebra homomorphism from the group algebra $\mathbb{Z}[G]$ to $\mathbb{C}$ by
$$\chi\left(\sum_{g\in G}a_gg\right )=\sum_{g\in G}a_g\chi(g).$$
For a subgroup $H$ of $G$, we write $H^{\perp}=\{\chi\in \widehat{G}:~ \chi(g)=1\ {\rm for\ all}\ g\in H \}$. If $\chi\in H^{\perp}$, we say that $\chi$ is {\bf principal} on $H$. Clearly, $|H^{\perp}|=\frac{|G|}{|H|}$ holds.
We begin with two preparatory lemmas.

\begin{lemma}\label{FIF}{\rm \cite[Lemma 3.5]{BJ1999}}{\rm (Fourier inversion formula)}
Let $G$ be a finite abelian group and $X=\sum_{g\in G}a_gg\in \mathbb{Z}[G]$. Then
$$a_g=\frac{1}{|G|}\sum\limits_{\chi\in \widehat{G}}\chi(Xg^{-1}) $$
holds for all $g\in G$.
\end{lemma}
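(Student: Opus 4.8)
The plan is to prove the identity via the orthogonality relations for characters of a finite abelian group, which in turn rest only on the fact (already recorded in the excerpt) that $\widehat{G}$ is a group isomorphic to $G$, so that characters separate the elements of $G$.

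First I would establish the orthogonality relation
$$\sum_{\chi\in\widehat{G}}\chi(h)=\begin{cases}|G|, & h=e,\\ 0, & h\neq e.\end{cases}$$
The case $h=e$ is immediate, since $\chi(e)=1$ for every $\chi$ and $|\widehat{G}|=|G|$. For $h\neq e$, the isomorphism $\widehat{G}\cong G$ guarantees a character $\psi$ with $\psi(h)\neq 1$. Writing $S=\sum_{\chi\in\widehat{G}}\chi(h)$ and using that the map $\chi\mapsto\psi\chi$ permutes $\widehat{G}$, one gets $\psi(h)S=\sum_{\chi\in\widehat{G}}(\psi\chi)(h)=S$, whence $(\psi(h)-1)S=0$ and therefore $S=0$.

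Next I would unfold the right-hand side of the claimed formula. Writing $X=\sum_{h\in G}a_h h$ and using that each $\chi$ extends to an algebra homomorphism on $\mathbb{Z}[G]$, we have $\chi(Xg^{-1})=\sum_{h\in G}a_h\chi(hg^{-1})$. Summing over all $\chi\in\widehat{G}$ and interchanging the two finite sums yields
$$\sum_{\chi\in\widehat{G}}\chi(Xg^{-1})=\sum_{h\in G}a_h\sum_{\chi\in\widehat{G}}\chi(hg^{-1}).$$
By the orthogonality relation the inner sum equals $|G|$ when $hg^{-1}=e$, i.e.\ $h=g$, and vanishes otherwise, so the double sum collapses to $a_g\,|G|$. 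Dividing by $|G|$ gives the assertion for every $g\in G$.

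The only delicate point is the separation property invoked in the orthogonality relation (the existence of a $\psi$ with $\psi(h)\neq 1$ for each $h\neq e$); everything else is a purely formal manipulation of finite sums. Since the excerpt already asserts $\widehat{G}\cong G$, this ingredient is available, and the proof reduces to the short computation above once orthogonality is in hand.
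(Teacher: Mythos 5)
Your proof is correct, but there is nothing in the paper to compare it against: the paper states this lemma as a quotation from \cite{BJ1999} (Lemma 3.5 there) and supplies no proof of its own. Your orthogonality-based derivation is precisely the standard textbook argument, and the reduction of the inversion formula to the relation $\sum_{\chi\in\widehat{G}}\chi(h)=|G|$ for $h=e$ and $0$ otherwise, followed by the interchange of the two finite sums and the collapse to $a_g|G|$, is carried out correctly. The one step worth tightening is your justification of the separation property: the bare existence of an abstract isomorphism $\widehat{G}\cong G$ does not by itself guarantee, for a given $h\neq e$, a character $\psi$ with $\psi(h)\neq 1$, since an abstract isomorphism says nothing about the evaluation pairing between $G$ and $\widehat{G}$. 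The standard repairs are either to write $G$ as a direct product of cyclic groups and exhibit $\psi$ explicitly on a factor where $h$ has a nontrivial coordinate, or to use the count (asserted in the paper) that $G$ has exactly $|G|$ distinct characters: if every character were trivial on $h$, then all $|G|$ characters would factor through $G/\langle h\rangle$, and since distinct characters are linearly independent as functions, that quotient carries at most $|G|/\text{ord}(h)<|G|$ distinct characters, a contradiction. With that ingredient in place, the rest of your argument is complete.
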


\begin{lemma}\label{integer r}{\rm \cite[Proposition 6.1.1]{IR1992}}
A rational number $r\in \mathbb{Q}$ is an algebraic integer if and only if $r$ is an integer.
\end{lemma}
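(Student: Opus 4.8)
The plan is to prove the two implications separately, as the statement is a biconditional. The forward direction is immediate: if $r \in \mathbb{Z}$, then $r$ is a root of the monic polynomial $x - r \in \mathbb{Z}[x]$, so by the very definition of an algebraic integer (a root of some monic polynomial with integer coefficients), $r$ is an algebraic integer. This requires no further argument.

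For the nontrivial converse, I would start by writing a rational algebraic integer $r$ in lowest terms as $r = a/b$ with $a, b \in \mathbb{Z}$, $b \geq 1$, and $\gcd(a,b) = 1$. By hypothesis there is a monic polynomial $f(x) = x^n + c_{n-1}x^{n-1} + \cdots + c_1 x + c_0$ with every $c_i \in \mathbb{Z}$ and $f(r) = 0$. Substituting $r = a/b$ and clearing denominators by multiplying through by $b^n$ yields the integer relation $a^n + c_{n-1}a^{n-1}b + \cdots + c_1 a b^{n-1} + c_0 b^n = 0$. Isolating the leading term gives $a^n = -b\,(c_{n-1}a^{n-1} + c_{n-2}a^{n-2}b + \cdots + c_0 b^{n-1})$, so that $b \mid a^n$.

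The final step is to invoke unique factorization in $\mathbb{Z}$: since $\gcd(a,b) = 1$, no prime divides both $a$ and $b$, hence no prime divides both $a^n$ and $b$, so $\gcd(a^n, b) = 1$. Combined with $b \mid a^n$ and $b \geq 1$, this forces $b = 1$, whence $r = a \in \mathbb{Z}$.

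The argument is elementary throughout, and I do not anticipate any genuine obstacle. The only point meriting a moment's care is the divisibility deduction at the end, where coprimality of $a$ and $b$ must be transferred to coprimality of $a^n$ and $b$ before concluding $b = 1$; this is exactly the content of the rational root theorem specialized to monic polynomials, and it rests solely on unique factorization in $\mathbb{Z}$.
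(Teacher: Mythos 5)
Your proof is correct. The paper itself offers no proof of this lemma --- it is quoted directly from Ireland and Rosen \cite[Proposition 6.1.1]{IR1992} --- and your argument (clear denominators in a monic integer polynomial evaluated at $r=a/b$ in lowest terms, deduce $b\mid a^n$, then use $\gcd(a^n,b)=1$ to force $b=1$) is precisely the standard rational-root-theorem proof given in that reference, so there is nothing to add.
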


Martin and Stinson provided the following property of a nonprincipal character for a SEDF.
\begin{lemma}\label{integer r1}{\rm \cite{MS-arXiv}}
Suppose $\{D_1,...,D_m\}$ is a nontrivial $(v,m, k, \lambda)$-SEDF in a group $G$, and let $D=\bigcup\limits_{i=1}^m D_i$. Then exists a nonprincipal character $\chi\in \widehat{G}$ such that $\chi(D)\neq 0$.
\end{lemma}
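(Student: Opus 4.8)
The claim is that for a nontrivial SEDF, with $D = \bigcup_{i=1}^m D_i$, some nonprincipal character $\chi$ satisfies $\chi(D) \neq 0$. The plan is to argue by contradiction: suppose $\chi(D) = 0$ for every nonprincipal character $\chi \in \widehat{G}$. The natural first move is to translate the defining multiset equation \eqref{SEDF11} into the group ring $\mathbb{Z}[G]$. Writing each $D_j$ as a group ring element, the external difference condition for a fixed $j$ becomes $\sum_{\ell \neq j} D_\ell D_j^{(-1)} = \lambda(G - e)$, where $G$ here denotes the group-ring element $\sum_{g \in G} g$. Summing the left-hand expression is cleaner if I first set $D = \sum_i D_i$ and observe that $\sum_{\ell \neq j} D_\ell = D - D_j$, so the relation reads $(D - D_j)D_j^{(-1)} = \lambda(G - e)$ for each $j$.

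**Applying characters.** I would now apply an arbitrary nonprincipal character $\chi$ to this identity. Since $\chi(G) = 0$ for nonprincipal $\chi$ and $\chi(e) = 1$, the right-hand side becomes $-\lambda$. On the left, using $\chi(D_j^{(-1)}) = \overline{\chi(D_j)}$ (as the $D_j$ have integer, in fact $0/1$, coefficients), I get $\bigl(\chi(D) - \chi(D_j)\bigr)\overline{\chi(D_j)} = -\lambda$ for every $j$. Under the contradiction hypothesis $\chi(D) = 0$, this collapses to $-|\chi(D_j)|^2 = -\lambda$, i.e. $|\chi(D_j)|^2 = \lambda$ for all $j$ and all nonprincipal $\chi$. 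This is the key structural consequence: the hypothesis would force every $D_j$ to have all its nonprincipal character values of constant modulus $\sqrt{\lambda}$.

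**Deriving the contradiction.** The remaining step is to show this uniform-modulus condition is impossible for a nontrivial SEDF. I would feed the relation $|\chi(D_j)|^2 = \lambda$ back through the Fourier inversion formula (Lemma \ref{FIF}) applied to $D_j D_j^{(-1)} = \sum_g a_g g$: the coefficient $a_g = \frac{1}{|G|}\sum_\chi \chi(D_j D_j^{(-1)} g^{-1}) = \frac{1}{|G|}\sum_\chi |\chi(D_j)|^2 \chi(g^{-1})$. Splitting off the principal character (which contributes $\chi_0(D_j D_j^{(-1)}) = k^2$) and using $|\chi(D_j)|^2 = \lambda$ for the rest gives $a_g = \frac{1}{|G|}\bigl(k^2 + \lambda\sum_{\chi \neq \chi_0}\chi(g^{-1})\bigr)$. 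Since $\sum_{\chi} \chi(g^{-1}) = 0$ for $g \neq e$ and $= |G|$ for $g = e$, this evaluates to $a_e = \frac{k^2 - \lambda}{|G|} + \lambda$ and $a_g = \frac{k^2 - \lambda}{|G|}$ for $g \neq e$. Forcing these to be nonnegative integers pins down $D_j$ as (essentially) a $(v,k,\lambda)$-difference-set-like object, and I expect the contradiction to emerge from the parameter relation \eqref{mk2=lambda(n-1)} together with disjointness and nontriviality ($k>1$); in particular $|G|$ must divide $k^2 - \lambda$, which clashes with the SEDF parameter constraints once $m > 1$ and $k > 1$ are used.

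**Main obstacle.** The delicate point is the final numerical contradiction: I must verify that the difference-set structure forced on each $D_j$ cannot coexist with the relation $(m-1)k^2 = \lambda(v-1)$ and the disjointness of the $D_j$'s, rather than merely producing a consistent-looking difference set. The cleanest route may be to avoid the explicit Fourier computation and instead sum $|\chi(D_j)|^2 = \lambda$ over all nonprincipal $\chi$ and all $j$, comparing with $\sum_{\chi}\sum_j |\chi(D_j)|^2 = \sum_j \chi_0(D_j)^2 + (\text{rest})$ via Parseval; since $\sum_{\chi \in \widehat{G}} |\chi(D_j)|^2 = |G|\cdot k$ (the $D_j$ being a $k$-set), matching the principal term $k^2$ with the count of nonprincipal characters times $\lambda$ yields $|G|k = k^2 + (|G|-1)\lambda$, i.e. $v(k - \lambda) = k^2 - \lambda$, and I expect this to be incompatible with \eqref{mk2=lambda(n-1)} unless $k = 1$, giving the contradiction and hence the existence of the desired $\chi$.
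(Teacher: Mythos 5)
Your setup and computations are all correct as far as they go: under the contradiction hypothesis, applying a nonprincipal $\chi$ to \eqref{SEDF11} (your group-ring identity is Equation (\ref{Dj1})) gives $|\chi(D_j)|^2=\lambda$ for every nonprincipal $\chi$, and Parseval then yields $vk=k^2+(v-1)\lambda$, i.e.\ $v(k-\lambda)=k^2-\lambda$. The genuine gap is the endgame, which you defer with ``I expect this to be incompatible with \eqref{mk2=lambda(n-1)} unless $k=1$'': that expectation is false as stated. Substituting \eqref{mk2=lambda(n-1)} into $v(k-\lambda)=k^2-\lambda$ gives $k(v-k)=\lambda(v-1)=(m-1)k^2$, i.e.\ exactly $v=mk$ --- the two relations are perfectly consistent over $\mathbb{Q}$. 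Your first route fares no better: the Fourier coefficients of $D_jD_j^{(-1)}$ force $a_e=k$, which again gives $v(k-\lambda)=k^2-\lambda$, so $v\mid k^2-\lambda$ holds automatically (with quotient $k-\lambda$), and each $D_j$ being a $(v,k,k-\lambda)$-difference set is self-consistent too, since the count $k(k-1)=(k-\lambda)(v-1)$ is once more equivalent to $v=mk$. No amount of algebraic matching of these identities yields a contradiction; what your argument actually proves is $D=G$ and $mk=v$.

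The missing idea is an \emph{integrality} argument to kill the case $mk=v$: from \eqref{mk2=lambda(n-1)} with $v=mk$ we get $(m-1)k^2=\lambda(mk-1)$, and since $\gcd(k,mk-1)=1$ the integer $mk-1$ must divide $m-1$, which is impossible because $k\geq 2$ and $m\geq 2$ give $mk-1>m-1\geq 1$. With that divisibility step inserted your proof closes; without it, the step you yourself flagged as the main obstacle genuinely fails. Note also that your detour through $|\chi(D_j)|^2=\lambda$ is unnecessary: Fourier inversion (Lemma \ref{FIF}) applied to the $0/1$ element $D$ shows every coefficient of $D$ equals $mk/v$, so $v\mid mk$ and hence $mk=v$ at once. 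This is essentially how the argument underlying the paper's cited proof runs --- it derives $v\mid mk$ directly by summing $\chi(D)$ over all nonprincipal characters, and excludes $mk\equiv 0\pmod v$ by a separate counting/divisibility argument --- so your approach is the same in spirit but currently lacks the one arithmetic step that carries the whole contradiction.
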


Suppose $\{D_1,\ldots, D_m\}$ is a nontrivial $(v,m,k,\lambda)$-SEDF in $G$ with $m\geq 3$, and let $D=\bigcup\limits_{i=1}^m D_i$. Let $\chi$ be a nonprincipal character such that $\chi(D)\neq 0$. Set $\alpha_j$ to be the real number $\frac{|\chi(D_j)|^2}{|\chi(D_j)|^2-\lambda}$. By applying the nonprincipal character $\chi$ to Equation (\ref{SEDF11}), we obtain
 \begin{equation}
\label{Dj1}
\chi(D_j)(\overline{\chi(D)-\chi(D_j)})=-\lambda
\end{equation}
 for each $j$. Then conjugate Equation (\ref{Dj1}), multiply both sides by $\chi(D_j)$, and rearrange to give
 \begin{equation}
\label{Dj2}
\chi(D_j)=\alpha_j\chi(D).
\end{equation}
 Substitute for $\chi(D_j)$ from Equation (\ref{Dj2}) into Equation (\ref{Dj1}) to obtain a quadratic equation in $\alpha_j$:
 $$\alpha_j^2-\alpha_j-\frac{\lambda}{|\chi(D)|^2}=0.$$
 The solutions of this equation are
  \begin{equation}
\label{E1}
 \alpha=\frac{1}{2}\left(1+\sqrt{1+\frac{4\lambda}{|\chi(D)|^2}}\ \right ), \ \ \beta=\frac{1}{2}\left (1-\sqrt{1+\frac{4\lambda}{|\chi(D)|^2}}\ \right ).
 \end{equation}
Let $x$ and $y$ be the number of times $\alpha_j$ takes the value $\alpha$ and $\beta$, respectively, as $j$ ranges over $1\leq j \leq m$. Using $\chi(D)=\sum\limits_{i=1}^m\chi(D_i)$, we find from Equation (\ref{Dj2}) that
  $$x\alpha+y\beta=1.$$
  Combine with the counting condition $x+y=m$ to determine $x$ and $y$ as
  \begin{equation}
   \label{E2}
   x=\frac{m}{2}-\frac{m-2}{2\sqrt{1+\frac{4\lambda}{|\chi(D)|^2}}},\ \  y=\frac{m}{2}+\frac{m-2}{2\sqrt{1+\frac{4\lambda}{|\chi(D)|^2}}}.
   \end{equation}
 From Equation (\ref{E2}), in order that $x,y$ are positive integers, it holds that
\begin{equation}
\label{AX}
\sqrt{1+\frac{4\lambda}{|\chi(D)|^2}}=\frac{b_{\chi}}{a_{\chi}}\ \ {\rm for\ each} \ \chi \in \widehat{G},\ \ {\rm where }\ a_{\chi},\ b_{\chi} \in \mathbb{Z},\ \   b_{\chi}>a_{\chi}>0\ {\rm and}\ \  \gcd(b_{\chi},\ a_{\chi})=1.
\end{equation}
Set
\[
\begin{array}{l}
\vspace{0.1cm}\widehat{G}^0=\{ {\rm nonprincipal}\ \chi \in \widehat{G}:~\chi(D)=0 \},\\
\vspace{0.1cm}\widehat{G}^{+}=\{ {\rm nonprincipal}\ \chi \in \widehat{G}:~ \chi(D)\neq 0,\ \chi(D_1)=\frac{a_{\chi}+b_{\chi}}{2a_{\chi}}\chi(D) \}, \\
\vspace{0.1cm}\widehat{G}^{-}=\{ {\rm nonprincipal}\ \chi \in \widehat{G}:~ \chi(D)\neq 0,\ \chi(D_1)=\frac{a_{\chi}-b_{\chi}}{2a_{\chi}}\chi(D) \}.
\end{array}
\]
Then $\widehat{G}=\{\chi_0\}\cup \widehat{G}^0\cup \widehat{G}^{+} \cup \widehat{G}^{-}$, and for $\chi\in \widehat{G}$, $|\chi(D)|^2$ and $|\chi(D_1)|^2$ are listed in Table I.

\medskip
\vspace{3.5cm}
\centerline{{\bf Table I}}
\[
\begin{array}{|l|l|l|}
\hline
\chi\in \widehat{G} & |\chi(D)|^2 &|\chi(D_1)|^2  \\  \hline

\chi=\chi_0 & k^2m^2 & k^2  \\ \hline

\chi\in \widehat{G}^0 & 0 & \lambda \\ \hline

\chi\in \widehat{G}^+ & \frac{4a_{\chi}^2\lambda}{b_{\chi}^2-a_{\chi}^2} & \frac{(b_{\chi}+a_{\chi})\lambda}{b_{\chi}-a_{\chi}} \\ \hline

\chi\in \widehat{G}^- & \frac{4a_{\chi}^2\lambda}{b_{\chi}^2-a_{\chi}^2} & \frac{(b_{\chi}-a_{\chi})\lambda}{b_{\chi}+a_{\chi}} \\ \hline
\end{array}
\]



Jedwab and Li \cite{JL2018} gave the following necessary conditions for the existence of SEDFs with $m>2$.

\begin{theorem} {\rm\cite{JL2018}}
\label{LJ20191}
Let $a_{\chi}, b_{\chi}$ be as defined in Equation {\rm(\ref{AX})} {\rm(}a nontrivial $(v,m,k,\lambda)$-SEDF in $G$ with $m>2${\rm)}. Then

{\rm (1)} $2b_{\chi}\mid \big( b_{\chi}m-a_{\chi}(m-2)\big)$, and $b_{\chi}\mid (m-2)$;

{\rm (2)} $(b_{\chi}-a_{\chi})\mid (b_{\chi}+a_{\chi})\lambda$, and $(b_{\chi}+a_{\chi})\mid (b_{\chi}-a_{\chi})\lambda$;

{\rm (3)} $(b_{\chi}^2-a_{\chi}^2)\mid 4\lambda$, and if $(b_{\chi}+a_{\chi})$ is odd then $(b_{\chi}^2-a_{\chi}^2)\mid  \lambda$.

\end{theorem}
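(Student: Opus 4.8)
The plan is to exploit the single recurring fact that for any $X\in\mathbb{Z}[G]$ the value $\chi(X)$ is a sum of roots of unity with integer coefficients, hence an algebraic integer, so that $|\chi(X)|^2=\chi(X)\overline{\chi(X)}$ is also an algebraic integer. Every quantity in the two right-hand columns of Table I is therefore simultaneously an algebraic integer and, as the table exhibits it, a rational number; so by Lemma~\ref{integer r} each of them is an ordinary integer. All three parts will then be read off as the integrality of a suitable entry of Table I together with the coprimality $\gcd(a_\chi,b_\chi)=1$.

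For part (1) I would start from the expression for $x$ in Equation~(\ref{E2}). Substituting $\sqrt{1+4\lambda/|\chi(D)|^2}=b_\chi/a_\chi$ gives $x=\big(mb_\chi-(m-2)a_\chi\big)/(2b_\chi)$, and since $x$ is an integer this is exactly the first assertion $2b_\chi\mid\big(mb_\chi-(m-2)a_\chi\big)$. Reducing this divisibility modulo $b_\chi$ kills the term $mb_\chi$ and leaves $b_\chi\mid (m-2)a_\chi$; because $\gcd(a_\chi,b_\chi)=1$, this forces $b_\chi\mid(m-2)$, the second assertion.

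For part (2) the key observation is that the hypothesis $m>2$ (together with $b_\chi>a_\chi>0$) makes both $x$ and $y$ strictly positive, hence each at least $1$: indeed $mb_\chi>ma_\chi>(m-2)a_\chi$ gives $x>0$, while $y\geq m/2>1$. Thus among $D_1,\dots,D_m$ the value $\alpha_j=\alpha$ is attained for some index and $\alpha_j=\beta$ for some other index. For the former, $|\chi(D_j)|^2=\alpha^2|\chi(D)|^2=\frac{(b_\chi+a_\chi)\lambda}{b_\chi-a_\chi}$ is an integer, giving $(b_\chi-a_\chi)\mid(b_\chi+a_\chi)\lambda$; for the latter, $|\chi(D_{j'})|^2=\frac{(b_\chi-a_\chi)\lambda}{b_\chi+a_\chi}$ is an integer, giving $(b_\chi+a_\chi)\mid(b_\chi-a_\chi)\lambda$. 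This is where the hypothesis $m>2$ is genuinely used, and it is the only nonformal point in the argument: it is what lets me invoke both rows $\widehat{G}^+$ and $\widehat{G}^-$ of Table I simultaneously, rather than merely the one to which $D_1$ happens to belong.

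For part (3) I would use instead the integrality of $|\chi(D)|^2=\frac{4a_\chi^2\lambda}{b_\chi^2-a_\chi^2}$, which yields $(b_\chi^2-a_\chi^2)\mid 4a_\chi^2\lambda$. Since $\gcd(a_\chi,b_\chi)=1$ implies $\gcd(a_\chi^2,\,b_\chi^2-a_\chi^2)=1$ (a common prime factor would divide $b_\chi^2$), I may cancel $a_\chi^2$ to obtain $(b_\chi^2-a_\chi^2)\mid 4\lambda$. For the refinement, note that $b_\chi+a_\chi$ odd forces $a_\chi,b_\chi$ to have opposite parities, so $b_\chi-a_\chi$ is odd as well and $b_\chi^2-a_\chi^2=(b_\chi-a_\chi)(b_\chi+a_\chi)$ is odd; then $\gcd(b_\chi^2-a_\chi^2,4)=1$ upgrades the divisibility to $(b_\chi^2-a_\chi^2)\mid\lambda$. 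Beyond this bookkeeping no real obstacle remains, the whole proof resting on the rational-algebraic-integer principle of Lemma~\ref{integer r} and the decomposition of $\widehat{G}$ recorded in Table I.
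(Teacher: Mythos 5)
Your proof is correct, and since the paper states Theorem \ref{LJ20191} only as a quoted result from \cite{JL2018} without reproving it, the right comparison is with the machinery the paper develops just before the statement — Equations (\ref{E2}) and (\ref{AX}), Table I, and the rational-algebraic-integer argument of Lemma \ref{integer1} — which is exactly what you use, so your derivation is the intended one: integrality of the count $x=\frac{mb_{\chi}-(m-2)a_{\chi}}{2b_{\chi}}$ gives part (1), integrality of the Table I entries gives parts (2) and (3), and $\gcd(a_{\chi},b_{\chi})=1$ handles the cancellations. You also correctly supply the one point that genuinely needs care, namely that $m>2$ together with $b_{\chi}>a_{\chi}>0$ forces $x\geq 1$ and $y\geq 1$, so that both rows $\widehat{G}^{+}$ and $\widehat{G}^{-}$ of Table I are realized among the $D_j$ for the same character $\chi$, which is precisely what part (2) requires.
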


We need the following lemma for a nontrivial $(v,m,k,\lambda)$-SEDF with $m\geq 3$.
\begin{lemma}\label{integer1}
Suppose $\{D_1,\ldots, D_m\}$ is a nontrivial $(v,m,k,\lambda)$-SEDF in $G$ with $m\geq 3$, and set $D=\bigcup\limits_{i=1}^m D_i$. Let $\chi$ be a nonprincipal character $\chi\in \widehat{G}$ such that $\chi(D)\neq 0$. Then $|\chi(D)|^2, |\chi(D_j)|^2$ and $|\chi(D-D_j)|^2$ are positive integers for each $j,1\leq j \leq m$. 
\end{lemma}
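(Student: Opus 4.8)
The plan is to show that each of $|\chi(D)|^2$, $|\chi(D_j)|^2$ and $|\chi(D-D_j)|^2$ is simultaneously a rational number and an algebraic integer, and then to invoke Lemma~\ref{integer r} to conclude that each is a rational integer; positivity will follow once the relevant character sums are seen to be nonzero.

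First I would record the algebraic-integer property. For $X=\sum_{g\in G}a_gg\in\mathbb{Z}[G]$ one has $\overline{\chi(X)}=\chi(X^{(-1)})$, so $|\chi(X)|^2=\chi(XX^{(-1)})$. Applying this with $X=D$, $X=D_j$ and $X=D-D_j$ (all of which lie in $\mathbb{Z}[G]$, since the $D_i$ are pairwise disjoint subsets of $G$), each of the three quantities equals $\chi$ evaluated at a group-ring element with integer coefficients. As $\chi$ sends group elements to roots of unity, these values are algebraic integers.

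Next I would establish rationality, which is where the hypothesis $m\geq 3$ enters. Since $\chi(D)\neq 0$, Equation~(\ref{AX}) applies and gives $\sqrt{1+4\lambda/|\chi(D)|^2}=b_{\chi}/a_{\chi}\in\mathbb{Q}$, whence $|\chi(D)|^2=4\lambda a_{\chi}^2/(b_{\chi}^2-a_{\chi}^2)\in\mathbb{Q}$. By Equation~(\ref{Dj2}) we have $\chi(D_j)=\alpha_j\chi(D)$ with $\alpha_j\in\{\alpha,\beta\}$, and by Equation~(\ref{E1}) both $\alpha=(a_{\chi}+b_{\chi})/(2a_{\chi})$ and $\beta=(a_{\chi}-b_{\chi})/(2a_{\chi})$ are rational; hence $|\chi(D_j)|^2=\alpha_j^2|\chi(D)|^2\in\mathbb{Q}$. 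Finally $\chi(D-D_j)=(1-\alpha_j)\chi(D)$, so $|\chi(D-D_j)|^2=(1-\alpha_j)^2|\chi(D)|^2\in\mathbb{Q}$ as well; note $1-\alpha=\beta$ and $1-\beta=\alpha$, so this value is simply the other of the two numbers $\alpha^2|\chi(D)|^2$ and $\beta^2|\chi(D)|^2$. Combining rationality with the algebraic-integer property and applying Lemma~\ref{integer r} shows that all three quantities are integers. (Alternatively one may read the two possible values off Table~I and use the divisibilities in Theorem~\ref{LJ20191}(2),(3), but the route above avoids their full strength.)

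For positivity, $|\chi(D)|^2>0$ holds by hypothesis. Since $4\lambda/|\chi(D)|^2>0$ we have $\sqrt{1+4\lambda/|\chi(D)|^2}>1$, so $\alpha>0>\beta$ and both $\alpha_j$ and $1-\alpha_j$ are nonzero; thus $\chi(D_j)=\alpha_j\chi(D)\neq 0$ and $\chi(D-D_j)=(1-\alpha_j)\chi(D)\neq 0$, giving $|\chi(D_j)|^2>0$ and $|\chi(D-D_j)|^2>0$. The step needing the most care is the rationality of $|\chi(D)|^2$: a priori it lies only in the maximal real subfield of a cyclotomic field, and it is exactly the requirement that the counts $x,y$ in Equation~(\ref{E2}) be integers (which forces the square root to be rational and uses $m\geq 3$) that pins it down to $\mathbb{Q}$.
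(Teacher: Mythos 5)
Your proposal is correct and follows essentially the same route as the paper's own proof: rationality of $\sqrt{1+4\lambda/|\chi(D)|^2}$ forced by the integrality of the counts $x,y$ in Equation~(\ref{E2}) (equivalently Equation~(\ref{AX}), which needs $m\geq 3$), rationality of $\alpha_j$ and hence of all three squared moduli via Equations~(\ref{E1}) and~(\ref{Dj2}), and then Lemma~\ref{integer r} applied to these algebraic integers. Your only additions — making the algebraic-integer property explicit through $|\chi(X)|^2=\chi\bigl(XX^{(-1)}\bigr)$ and supplying the positivity argument via $\alpha>0>\beta$, which the paper leaves implicit — are harmless refinements of the same argument.
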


\begin{proof} Let $x$ be the number of times $\alpha_j$ takes the value $\alpha$. By Equation (\ref{E2}), $x$ is a positive integer. Since $m\geq 3$, we have $\sqrt{1+\frac{4\lambda}{|\chi(D)|^2}} \in \mathbb{Q}$. 
Then it holds that $ |\chi(D)|^2 \in \mathbb{Q}$. Clearly, $|\chi(D)|^2$ is an algebraic integer. By Lemma \ref{integer r}, we get $|\chi(D)|^2\in \mathbb{Z}$. Since $\sqrt{1+\frac{4\lambda}{|\chi(D)|^2}}\in \mathbb{Q},$ by Equation (\ref{E1}) we have $\alpha_j\in \mathbb{Q}$. Combing with Equation (\ref{Dj2}), it holds that
\[
\begin{array}{l}
\vspace{0.2cm}|\chi(D_j)|^2=\chi(D_j)\overline{\chi(D_j)}=\alpha_j\chi(D)\overline{\alpha_j\chi(D)}=\alpha_j^2|\chi(D)|^2\in \mathbb{Q},\ \ \ {\rm and}\\
|\chi(D\setminus D_j)|^2=\chi(D\setminus D_j)\overline{\chi(D\setminus D_j)}=(1-\alpha_j)\chi(D)\overline{(1-\alpha_j)\chi(D)}=(1-\alpha_j)^2|\chi(D)|^2\in \mathbb{Q}.
\end{array}
\]
Since $|\chi(D_j)|^2$ and $|\chi(D\setminus D_j)|^2$ are algebraic integers, by Lemma \ref{integer r} we also have $|\chi(D_j)|^2$ and $|\chi(D-D_j)|^2$ are integers.
 This completes the proof. \qed
\end{proof}


\section{An exponent bound obtained by prime ideals factorization}

In this section, we present an exponent bound on a group $G$ containing a $(v,m,k, \lambda)$-SEDF with $m>2$ based on the knowledge of decomposition of prime ideals and integer basis of cyclotomic fields, and use it to prove nonexistence results for the case $m>2$.

For a positive integer $n$, we use $\zeta_n$ to denote the primitive $n$-th complex root of unity $e^{\frac{2\pi i}{n}}$. We need the definition of integer basis.
\begin{definition}{\rm \cite[Chapter 1]{N1999}}
Let $B$ be a ring and let $A$ be a subring of $B$. A system of elements $w_1,\ldots,w_n \in B$ is called an {\bf integer basis} of $B$ over $A$ if each $b\in B$ can be written uniquely as a linear combination
$$ b=a_1w_1+a_2w_2+\cdots+ a_nw_n$$
with cofficients $a_i\in A$.
\end{definition}


In 1999, B. Schmidt \cite{S1999} obtained the following necessary lemma on the behavior of the coefficients of cyclotomic integers in basis representations.

\begin{lemma}\label{BSC}{\rm \cite{S1999}}
Let $n=\prod\limits_{i=1}^tp_i^{a_i}$ be the prime power decomposition of a positive integer $n$, and let $k$ be any divisor of $n$, say $k=\prod\limits_{i=1}^sp_i^{b_i}$ with $s\leq t$ and $1\leq b_i\leq a_i$ for $i=1,\ldots,s.$ Then
$$B_{n,k}:=\left\{\prod\limits_{i=1}^s\zeta_{p_i^{a_i}}^{r_i}\prod\limits_{i=s+1}^t\zeta_{p_i}^{k_i}\zeta_{p_i^{a_i}}^{l_i}:~ 0\leq r_i\leq p_i^{a_i-b_i}-1,\ 0\leq k_i\leq p_i-2,\ 0\leq l_i\leq p_i^{a_i-1}-1\right\}$$
is an integeral basis of $\mathbb{Q}(\zeta_n)$ over $\mathbb{Q}(\zeta_k).$ Furthermore, the following holds.

{\rm (a)} Assume that an element $X$ of $\mathbb{Z}[\zeta_n]$ has the form
$$X=\sum\limits_{j=0}^{n-1}b_j\zeta_n^j$$
where $b_0,\ldots, b_{n-1}$ are integers with $0\leq b_j\leq C$ for some constant $C$. Then
$$X=\sum\limits_{x\in B_{n,k}}x\sum\limits_{j=0}^{k-1}c_{xj}\zeta_k^j$$
where the $c_{xj}$’s are integers with $|c_{xj}|\leq 2^{t-s-1}C$ if $t>s$ and $0\leq c_{xj}\leq C$ if $t=s$.

{\rm (b)} If the assumption on the coefficients is replaced by $|b_j|\leq C$, then
$$X=\sum\limits_{x\in B_{n,k}}x\sum\limits_{j=0}^{k-1}c_{xj}\zeta_k^j$$
where the $c_{xj}$’s are integers with $|c_{xj}|\leq 2^{t-s}C$.
\end{lemma}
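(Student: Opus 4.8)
The plan is to treat the basis assertion and the coefficient bounds (a), (b) separately: the former is a statement in algebraic number theory that I would reduce to the prime-power case, while (a) and (b) are effective rewriting procedures that I would prove by an explicit reduction of monomials together with careful sign-bookkeeping.

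For the integral basis, I would first reduce to prime powers. Since each $\mathbb{Z}[\zeta_{p_i^{a_i}}]$ is the full ring of integers of $\mathbb{Q}(\zeta_{p_i^{a_i}})$ and the discriminants of these fields are powers of the distinct primes $p_i$, hence pairwise coprime, the standard compositum theorem (e.g.\ from \cite{N1999}) gives that $\mathbb{Z}[\zeta_n]=\bigotimes_i\mathbb{Z}[\zeta_{p_i^{a_i}}]$ and that products of integral bases of the factors form an integral basis of $\mathbb{Q}(\zeta_n)$ over $\mathbb{Q}$; the same applies to $\mathbb{Q}(\zeta_k)$ via the factors $\mathbb{Q}(\zeta_{p_i^{b_i}})$ for $i\le s$. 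It then suffices to produce a relative integral basis factor by factor. For $i\le s$, the element $\zeta_{p_i^{a_i}}$ satisfies the monic polynomial $x^{p_i^{a_i-b_i}}-\zeta_{p_i^{b_i}}$ over $\mathbb{Q}(\zeta_{p_i^{b_i}})$, so there is a surjection of free $\mathbb{Z}[\zeta_{p_i^{b_i}}]$-modules from $\mathbb{Z}[\zeta_{p_i^{b_i}}][x]/(x^{p_i^{a_i-b_i}}-\zeta_{p_i^{b_i}})$ onto $\mathbb{Z}[\zeta_{p_i^{a_i}}]$; comparing $\mathbb{Z}$-ranks (both equal $\phi(p_i^{a_i})$) forces it to be an isomorphism, whence $\{\zeta_{p_i^{a_i}}^{r_i}:0\le r_i\le p_i^{a_i-b_i}-1\}$ is a relative integral basis. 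For $i>s$ the power basis, written as $\{\zeta_{p_i}^{k_i}\zeta_{p_i^{a_i}}^{l_i}\}$, is an absolute integral basis of $\mathbb{Z}[\zeta_{p_i^{a_i}}]$. Multiplying these factor bases yields exactly $B_{n,k}$, and the cardinality check $|B_{n,k}|=\phi(n)/\phi(k)$ confirms the size.

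For (a) and (b), I would set up an explicit reduction of a monomial $\zeta_n^j$. Writing $\zeta_n^j=\prod_i\zeta_{p_i^{a_i}}^{j_i}$ by the Chinese Remainder Theorem, I handle each coordinate separately. For $i\le s$ I split the exponent as $r_i+p_i^{a_i-b_i}q_i$, so the factor becomes $\zeta_{p_i^{a_i}}^{r_i}\cdot\zeta_{p_i^{b_i}}^{q_i}$, contributing a clean $B_{n,k}$-part and a $\zeta_k$-part with no sign change. For $i>s$ I reduce $\zeta_{p_i^{a_i}}^{j_i}$ into the power basis using the relation $\sum_{c=0}^{p_i-1}\zeta_{p_i^{a_i}}^{cp_i^{a_i-1}}=0$; a direct count shows that each basis element $\zeta_{p_i^{a_i}}^{c'}$ receives a contribution from exactly two exponents, one "direct" with coefficient $+1$ and one "wrapped" with coefficient $-1$. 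Collecting contributions over all $t-s$ reduced coordinates, the coefficient $c_{xj}$ of a basis element $x\in B_{n,k}$ against $\zeta_k^j$ becomes a signed sum $\sum_{\epsilon}(\prod_i\epsilon_i)\,b_{j(\epsilon)}$ over the $2^{t-s}$ sign patterns $\epsilon\in\{+1,-1\}^{t-s}$.

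The bounds then follow from counting. Under $|b_j|\le C$ this signed sum of $2^{t-s}$ terms has modulus at most $2^{t-s}C$, giving (b). Under $0\le b_j\le C$ I would use that exactly half of the $2^{t-s}$ sign patterns have product $+1$ and half have product $-1$ when $t>s$, so $c_{xj}$ lies between $-2^{t-s-1}C$ and $2^{t-s-1}C$, giving $|c_{xj}|\le 2^{t-s-1}C$; when $t=s$ no reduction occurs and each $c_{xj}$ equals a single nonnegative $b_j\le C$. I expect the main obstacle to be the multi-prime bookkeeping: making the CRT factorization of $\zeta_n^j$, the per-coordinate splitting, and the sign accounting simultaneously precise, and in particular verifying the "exactly two preimages with opposite signs" dichotomy per reduced coordinate, since this is exactly what produces the factor $2^{t-s}$ and, through the even split of sign patterns, the factor-of-two saving in the nonnegative case (a).
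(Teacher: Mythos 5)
The paper does not prove this lemma at all: it is quoted from Schmidt's paper \cite{S1999} as a known imported result, so there is no in-paper argument to compare against, and your proposal must be judged on its own merits — which it passes; it is, in substance, Schmidt's original argument. The basis claim is handled correctly: the coprime-discriminant compositum decomposition, the surjection-plus-rank-count showing $\{\zeta_{p_i^{a_i}}^{r_i}: 0\le r_i\le p_i^{a_i-b_i}-1\}$ is a relative integral basis of $\mathbb{Z}[\zeta_{p_i^{a_i}}]$ over $\mathbb{Z}[\zeta_{p_i^{b_i}}]$ for $i\le s$ (a surjection between free $\mathbb{Z}$-modules of equal finite rank is injective), and the observation that for $i>s$ the exponents $k_ip_i^{a_i-1}+l_i$ sweep $0,\ldots,\varphi(p_i^{a_i})-1$, identifying your factor basis with the power basis; the count $|B_{n,k}|=\varphi(n)/\varphi(k)$ checks out. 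Your central combinatorial claim for (a) and (b) is also the right one and is exactly what produces Schmidt's constants: after the CRT factorization, the coordinates $i\le s$ contribute no signs via the splitting $j_i=r_i+p_i^{a_i-b_i}q_i$, while for $i>s$ the relation $\sum_{c=0}^{p_i-1}\zeta_{p_i^{a_i}}^{cp_i^{a_i-1}}=0$ makes each basis exponent $cp_i^{a_i-1}+e$ (with $0\le c\le p_i-2$, $0\le e<p_i^{a_i-1}$) receive contributions from exactly the two residues $cp_i^{a_i-1}+e$ (sign $+1$) and $(p_i-1)p_i^{a_i-1}+e$ (sign $-1$), so each $c_{xj}$ is a signed sum of $2^{t-s}$ original coefficients with exactly $2^{t-s-1}$ of each sign when $t>s$; this yields $|c_{xj}|\le 2^{t-s-1}C$ under $0\le b_j\le C$, $|c_{xj}|\le 2^{t-s}C$ under $|b_j|\le C$, and a single nonnegative $b_j$ when $t=s$, matching the statement. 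The only point worth writing out explicitly is the harmless non-uniqueness of the inner coordinates: $\{\zeta_k^j:0\le j\le k-1\}$ is merely a spanning set of $\mathbb{Z}[\zeta_k]$, not a $\mathbb{Z}$-basis, but the tuples $(q_i)_{i\le s}$ biject with these $k$ powers via CRT on $k=\prod_{i\le s}p_i^{b_i}$, so your reduction genuinely produces a representation of the stated form with each $c_{xj}$ a signed sum of exactly $2^{t-s}$ of the $b_j$'s; with that remark added, the proof is complete and coincides with the standard one (Schmidt's induction on the number of unreduced primes is your sign-pattern sum, unrolled).
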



Let $H$ be a subgroup of an abelian group $G$, and let $\rho: G \rightarrow H$ be the canonical epimorphism. Each $\widehat{\chi}\in \widehat{H}$ extends to a character $\chi \in \widehat{G}$ satisfying $\chi(g) = \widehat{\chi}(\rho(g))$ for every $g \in G$. For each subset $D$ of $G$, it holds that
$$\rho(D)=\sum\limits_{h\in H}c_hh, \ {\rm where}\ 0\leq c_h\leq \frac{|G|}{|H|}.$$
Applying the character $\widehat{\chi}$ to the above equation, we obtain
$$\chi(D)=\widehat{\chi}(\rho(D))=\sum\limits_{h\in H}c_h\widehat{\chi}(h).$$
Applying  Lemma \ref{BSC} with $k=1$, we obtain the following corollary.
\begin{corollary}\label{c1}
Let $G$ be an abelian group of order $n$, and let $H$ be a cyclic subgroup of $G$ of order $d$. Let $\rho : G \rightarrow H$ be the canonical epimorphism, and let $D$ be a subset of $G$. Then for each generator $\widehat{\chi}$   of $\widehat{H}$, it holds that $\widehat{\chi}(\rho(D))=\sum\limits_{x\in B_{d,1}}c_xx$,
where each $c_x$ is an integer satisfying $|c_x|\leq 2^{t-1}\frac{n}{d}$ and $t$ is the number of distinct prime divisors of $d$.
\end{corollary}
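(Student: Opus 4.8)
The plan is to reduce the statement to a direct application of Lemma \ref{BSC}(a) with modulus $d$ and divisor $k=1$. The only substantive work is to check that $\widehat{\chi}(\rho(D))$ is a cyclotomic integer in $\ZZ[\zeta_d]$ whose coefficients in the naive root-of-unity expansion are nonnegative integers bounded by $C := n/d$; once this is in hand, the coefficient bound in the lemma translates verbatim into the claimed bound on the $c_x$.

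First I would record the two facts already assembled in the discussion preceding the corollary. Writing $\rho(D)=\sum_{h\in H}c_h h$, the coefficient $c_h$ counts the elements of $D$ lying in the fiber $\rho^{-1}(h)$; since every fiber of $\rho$ has size $|\ker\rho|=|G|/|H|=n/d$, we get $0\le c_h\le n/d$. Applying the extended character gives $\widehat{\chi}(\rho(D))=\sum_{h\in H}c_h\widehat{\chi}(h)$, an element of $\ZZ[\zeta_d]$.

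Next I would exploit that $\widehat{\chi}$ is a \emph{generator} of $\widehat{H}$. Because $H$ is cyclic of order $d$, a generator of $\widehat{H}$ has order $d$ and is therefore a faithful (injective) character, mapping $H$ isomorphically onto the group $\langle\zeta_d\rangle$ of $d$-th roots of unity. Concretely, fixing a generator $h_0$ of $H$ we have $\widehat{\chi}(h_0)=\zeta_d^a$ for some $a$ with $\gcd(a,d)=1$, so as $h$ ranges over $H$ the values $\widehat{\chi}(h)$ range over the full set $\{\zeta_d^j:0\le j\le d-1\}$, each occurring exactly once. Reindexing the sum accordingly, I can rewrite $\widehat{\chi}(\rho(D))=\sum_{j=0}^{d-1}b_j\zeta_d^j$, where $(b_j)_{j}$ is merely a permutation of $(c_h)_h$ and hence still satisfies $0\le b_j\le C=n/d$.

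Finally I would invoke Lemma \ref{BSC}(a), taking its modulus to be $d$ (so $t$ is the number of distinct prime divisors of $d$, and $t\ge 1$ for any nontrivial $H$) and its divisor to be $k=1$ (so $s=0$, the base field is $\mathbb{Q}(\zeta_1)=\mathbb{Q}$, and the basis is $B_{d,1}$). For $k=1$ the inner sum $\sum_{j=0}^{k-1}c_{xj}\zeta_k^j$ collapses to a single integer $c_x:=c_{x0}$, so the lemma yields $\widehat{\chi}(\rho(D))=\sum_{x\in B_{d,1}}c_x x$ with $|c_x|\le 2^{t-s-1}C=2^{t-1}\,n/d$, which is exactly the assertion. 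The whole argument is bookkeeping built on Lemma \ref{BSC}; the one point that deserves genuine care is the faithfulness step guaranteeing that $\widehat{\chi}$ realizes every $d$-th root of unity with multiplicity one, since this is what allows the bound $C=n/d$ on the $c_h$ to survive the reindexing and feed unchanged into the lemma's hypothesis.
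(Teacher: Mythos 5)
Your proposal is correct and follows essentially the same route as the paper: the paper's own justification (in the paragraph preceding the corollary) likewise writes $\rho(D)=\sum_{h\in H}c_h h$ with $0\le c_h\le n/d$ by fiber-counting, applies the generator $\widehat{\chi}$ to obtain an expansion $\sum_{j=0}^{d-1}b_j\zeta_d^j$ with the same coefficient bound, and then invokes Lemma \ref{BSC}(a) with $k=1$ (so $s=0$ and the bound $2^{t-s-1}C=2^{t-1}\frac{n}{d}$). Your explicit attention to the faithfulness of the generator character, which justifies the reindexing step the paper leaves implicit, is a welcome clarification but not a different method.
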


\begin{definition}{\rm \cite[Chapter 15]{LF2010}}
An ideal of $\mathbb{Z}[\zeta_n]$ that is invariant under complex conjugation is said to be {\bf self-conjugate}.
\end{definition}
With respect to our current problem, it is useful to know when complex conjugation in  $\mathbb{Z}[\zeta_w]$ fixes all the prime ideal divisors of the ideal $(n)$.

\begin{definition}{\rm \cite[Chapter 15]{LF2010}}
 Let $p,w$ be positive integers, where $p$ is prime. Write $w=p^su$ where $s\geq 0$ and $u$ is coprime to $p$. Then $p$ is {\bf self-conjugate} modulo $w$ if there is an integer $r$ such that $p^r\equiv -1\pmod u.$ An arbitary integer $n$ is {\bf self-conjugate} modulo $w$ if all of its prime divisors are self-conjugate modulo $w$.
\end{definition}

\begin{theorem}\label{self}
{\rm \cite[Corollary 1.4.5]{S2002}}
 Let $u,w$ be positive integers. If $u$ is self-conjugate modulo $w$, then every prime ideal of $\mathbb{Z}[\zeta_w]$ dividing ideal $(u)$ is self-conjugate.
\end{theorem}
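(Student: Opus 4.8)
The plan is to translate the statement into the Galois theory of the abelian extension $\mathbb{Q}(\zeta_w)/\mathbb{Q}$ and then invoke the standard description of how a rational prime decomposes in a cyclotomic field. Recall that $\mathrm{Gal}(\mathbb{Q}(\zeta_w)/\mathbb{Q}) \cong (\mathbb{Z}/w\mathbb{Z})^{*}$ via $\sigma_a \colon \zeta_w \mapsto \zeta_w^a \leftrightarrow a$, and that complex conjugation is precisely $\sigma_{-1}$, corresponding to $-1 \in (\mathbb{Z}/w\mathbb{Z})^{*}$. Thus an ideal is self-conjugate in the sense of the first definition above exactly when it is fixed by $\sigma_{-1}$. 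Since $(u)$ factors into prime ideals each lying over some rational prime $p \mid u$, and since $u$ being self-conjugate modulo $w$ means every such $p$ is self-conjugate modulo $w$, it suffices to fix one prime $p \mid u$ and show that $\sigma_{-1}$ fixes every prime ideal $\mathfrak{p}$ of $\mathbb{Z}[\zeta_w]$ lying over $p$.

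For this I would use the fundamental fact that $\sigma \in \mathrm{Gal}(\mathbb{Q}(\zeta_w)/\mathbb{Q})$ fixes $\mathfrak{p}$ if and only if $\sigma$ lies in the decomposition group $D_{\mathfrak{p}}$ of $\mathfrak{p}$. Because the extension is abelian, $D_{\mathfrak{p}}$ is independent of the particular prime $\mathfrak{p}$ over $p$; denote it $D_p$. So the theorem reduces to the single containment $\sigma_{-1} \in D_p$. First I would pin down $D_p$ explicitly. Writing $w = p^{s} t$ with $\gcd(p,t) = 1$ and using the Chinese Remainder decomposition $(\mathbb{Z}/w\mathbb{Z})^{*} \cong (\mathbb{Z}/p^{s}\mathbb{Z})^{*} \times (\mathbb{Z}/t\mathbb{Z})^{*}$, the inertia group of $p$ is the ramified factor $(\mathbb{Z}/p^{s}\mathbb{Z})^{*} \times \{1\}$ (reflecting that $p$ is totally ramified in $\mathbb{Q}(\zeta_{p^s})$ and unramified in $\mathbb{Q}(\zeta_t)$), while the Frobenius generating $D_p/I_p$ is the class of $p$ in $(\mathbb{Z}/t\mathbb{Z})^{*}$. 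Hence $D_p = (\mathbb{Z}/p^{s}\mathbb{Z})^{*} \times \langle p \bmod t \rangle$.

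It then remains to check the containment. Under the CRT isomorphism, $\sigma_{-1}$ corresponds to $(-1 \bmod p^{s},\, -1 \bmod t)$. Its first coordinate always lies in $(\mathbb{Z}/p^{s}\mathbb{Z})^{*}$, so $\sigma_{-1} \in D_p$ holds if and only if $-1 \bmod t \in \langle p \bmod t\rangle$, i.e.\ if and only if there is an integer $r$ with $p^{r} \equiv -1 \pmod{t}$. This last condition is exactly the definition of $p$ being self-conjugate modulo $w$, which holds by hypothesis. Therefore $\sigma_{-1}$ fixes every prime ideal over $p$, and letting $p$ range over the prime divisors of $u$ completes the argument.

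I expect the main obstacle to be the second paragraph: correctly identifying the decomposition and inertia groups when $p$ ramifies (the case $s \geq 1$), and justifying that the Frobenius contribution is the class of $p$ modulo the prime-to-$p$ part $t$. The cleanest route is to reduce to the unramified subextension $\mathbb{Q}(\zeta_t)/\mathbb{Q}$, whose fixed field is cut out by the inertia group, where the Frobenius at $p$ is unambiguously $\sigma_p$, and then lift through the inertia group; all of this is standard cyclotomic ramification theory, but it must be stated carefully to avoid the notational collision between the integer $u$ and the prime-to-$p$ part $t$ of $w$.
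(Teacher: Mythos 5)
Your argument is correct and is essentially the same as the one behind the paper's own treatment: the paper gives no proof of this statement, quoting it from Schmidt's lecture notes (Corollary 1.4.5), and that standard proof proceeds exactly as you propose, reducing self-conjugacy of a prime ideal above $p$ to the containment $\sigma_{-1}\in D_p$ and identifying $D_p \cong (\mathbb{Z}/p^{s}\mathbb{Z})^{*}\times\langle p \bmod t\rangle$ under the CRT splitting of $(\mathbb{Z}/w\mathbb{Z})^{*}$. The same decomposition-group facts appear in the paper itself as Lemma \ref{BOOK2} (the unramified case, where $G(P)=\langle\sigma_p\rangle$) and Lemma \ref{KSW} (the ramified case, where $p^{l}\equiv -1$ modulo the prime-to-$p$ part forces $\sigma_{-1}\in G(P)$), so your proof matches both the cited source and the paper's toolkit.
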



\begin{theorem}\label{g-1}{\rm \cite[Theorem 13.2]{IR1992}}
Let $p$ be a prime and $m$ a positive integer with $p \nmid m$. Let $f$ be the smallest positive integer such that $p^f\equiv 1\pmod m.$ Then in the ring of integers $R\subset \mathbb{Q}(\zeta_m)$, it holds that $$(p)=P_1P_2\cdots P_g$$ where each $P_i$ has degree $f$, $\varphi$ is Euler function and $g=\frac{\varphi(m)}{f}$.
\end{theorem}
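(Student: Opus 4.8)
The plan is to prove this classical splitting theorem by reducing the question to the factorization of the $m$-th cyclotomic polynomial $\Phi_m(x)$ modulo $p$, and then analyzing that factorization over finite fields. First I would recall that the ring of integers of $\mathbb{Q}(\zeta_m)$ is exactly $R=\mathbb{Z}[\zeta_m]$, so that $\zeta_m$ is a primitive element whose minimal polynomial over $\mathbb{Q}$ is $\Phi_m(x)$, of degree $\varphi(m)$. Because $R=\mathbb{Z}[\zeta_m]$ is monogenic with no index to worry about, the Kummer--Dedekind factorization theorem applies to every prime $p$: the factorization of the ideal $(p)$ into prime ideals of $R$ mirrors the factorization of $\Phi_m(x)$ into monic irreducibles over $\mathbb{F}_p=\mathbb{Z}/(p)$. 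Explicitly, if $\Phi_m(x)\equiv\prod_{i=1}^g h_i(x)^{e_i}\pmod p$ with the $h_i$ distinct monic irreducibles, then $(p)=\prod_i P_i^{e_i}$ where $P_i=(p,h_i(\zeta_m))$, and the residue degree of $P_i$ equals $\deg h_i$.

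Next I would show that modulo $p$ the factorization is squarefree and that every irreducible factor has the same degree $f$. Squarefreeness follows from $p\nmid m$: then $x^m-1$ and its derivative $mx^{m-1}$ are coprime in $\mathbb{F}_p[x]$, so $x^m-1$, and hence its divisor $\Phi_m$, is separable mod $p$; thus all $e_i=1$ and $p$ is unramified. To compute the degrees, I pass to $\overline{\mathbb{F}_p}$, where the roots of $\Phi_m$ are precisely the primitive $m$-th roots of unity. Using the cyclicity of $\mathbb{F}_{p^s}^{*}$, which has order $p^s-1$, such a root lies in $\mathbb{F}_{p^s}$ if and only if $m\mid p^s-1$, i.e. if and only if $f\mid s$, where $f$ is the multiplicative order of $p$ modulo $m$. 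Hence every primitive $m$-th root of unity generates exactly $\mathbb{F}_{p^f}$ over $\mathbb{F}_p$, so its minimal polynomial over $\mathbb{F}_p$ has degree $f$; that is, every irreducible factor $h_i$ of $\Phi_m\bmod p$ satisfies $\deg h_i=f$. Comparing total degrees gives the count $g=\varphi(m)/f$, and combining this with the Kummer--Dedekind correspondence yields $(p)=P_1\cdots P_g$ with each $P_i$ of residue degree $f$.

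The main obstacle is the input fact that $R=\mathbb{Z}[\zeta_m]$: the monogenicity of the full ring of integers is precisely what makes Kummer--Dedekind applicable with no exceptional primes, and this is a genuine theorem rather than a formality, so I would cite it rather than reprove it. A secondary point requiring care is identifying the word ``degree'' in the statement with the inertia (residue) degree $f=[R/P_i:\mathbb{F}_p]$, and checking that Kummer--Dedekind really delivers this degree as $\deg h_i$. I should note that an alternative, slicker route avoids explicit polynomial factorization: since $\mathbb{Q}(\zeta_m)/\mathbb{Q}$ is Galois with group $(\mathbb{Z}/m\mathbb{Z})^{*}$ and $p$ is unramified, the Frobenius at $p$ is the automorphism $\zeta_m\mapsto\zeta_m^{p}$, whose order in $(\mathbb{Z}/m\mathbb{Z})^{*}$ is $f$; the common residue degree equals this order, and $efg=\varphi(m)$ with $e=1$ forces $g=\varphi(m)/f$. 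I would mention this as the conceptual explanation, but carry out the finite-field argument as the self-contained proof.
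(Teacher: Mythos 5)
Your argument is correct, but note that the paper itself offers no proof of this statement: it is quoted verbatim as a classical result from Ireland--Rosen \cite[Theorem 13.2]{IR1992}, so the only meaningful comparison is with that source. Your route --- invoke $R=\mathbb{Z}[\zeta_m]$, apply Kummer--Dedekind so that the splitting of $(p)$ mirrors the factorization of $\Phi_m$ over $\mathbb{F}_p$, get $e_i=1$ from separability of $x^m-1$ (using $p\nmid m$), and compute the common degree $f$ by observing that a primitive $m$-th root of unity lies in $\mathbb{F}_{p^s}$ iff $m\mid p^s-1$ iff $f\mid s$ --- is sound at every step, including the identification of ``degree'' with residue degree $[R/P_i:\mathbb{F}_p]=\deg h_i$ and the count $g=\varphi(m)/f$ by comparing total degrees. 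Ireland--Rosen actually argue along the lines of what you call the ``slicker route'': they work directly with the residue field, noting that $R/P=\mathbb{F}_p(\bar{\zeta}_m)$ where $\bar{\zeta}_m$ still has exact multiplicative order $m$ (again by separability), so the residue degree is the least $s$ with $m\mid p^s-1$, and then $e=1$ together with $efg=\varphi(m)$ for the Galois extension $\mathbb{Q}(\zeta_m)/\mathbb{Q}$ gives $g$. The two proofs are essentially equivalent in content; yours makes the Kummer--Dedekind dictionary explicit and is self-contained at the level of finite fields, while the residue-field/Frobenius argument avoids discussing polynomial factorization altogether but leans on the transitivity of the Galois action to know that $e$, $f$ are constant across the $P_i$. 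Your one genuine external input, the monogenicity $\mathcal{O}_{\mathbb{Q}(\zeta_m)}=\mathbb{Z}[\zeta_m]$, is also assumed without proof in the textbook treatment, so citing it is appropriate.
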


\begin{theorem}\label{g-2}{\rm \cite[Proposition 1.10.3]{N1999}}
Let $m=\prod\limits_{i=1}^s p_i^{a_i}$ be the prime factorization of $m$, and for every prime number $p_i$, let $f_i$ be the smallest positive integer such that $p_i^{f_i}\equiv 1\pmod {\frac{m}{p_i^{a_i}}}.$
Then in the ring of integers $R\subset \mathbb{Q}(\zeta_m)$, it holds that
$$(p_i)=(P_1P_2\cdots P_{g_i})^{\varphi(p_i^{a_i})},$$
where $P_1,\ldots,P_{g_i}$ are distinct prime ideals, all of degree $f_i$.
\end{theorem}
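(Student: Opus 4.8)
The plan is to fix the index $i$, write $p=p_i$, $a=a_i$ and $n=\frac{m}{p_i^{a_i}}$, so that $\gcd(p,n)=1$ and $m=p^a n$, and to reduce the decomposition of $(p)$ in $\mathbb{Q}(\zeta_m)$ to its behaviour in the two subfields $\mathbb{Q}(\zeta_{p^a})$ and $\mathbb{Q}(\zeta_n)$, whose compositum is $\mathbb{Q}(\zeta_m)$. First I would record the two extreme cases. In $\mathbb{Q}(\zeta_{p^a})$ the prime $p$ is totally ramified, $(p)=(1-\zeta_{p^a})^{\varphi(p^a)}$, so here the ramification index is $e=\varphi(p^a)$ and the residue degree is $f=1$. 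In $\mathbb{Q}(\zeta_n)$, since $\gcd(p,n)=1$, Theorem \ref{g-1} applied with $m$ replaced by $n$ shows that $p$ is unramified and splits into $\varphi(n)/f_i$ prime ideals each of residue degree $f_i$, where $f_i$ is precisely the order of $p$ modulo $n$. Because $\mathbb{Q}(\zeta_m)/\mathbb{Q}$ is abelian, for any prime ideal $\mathfrak{P}$ above $p$ the invariants $e,f,g$ are independent of $\mathfrak{P}$ and satisfy $efg=[\mathbb{Q}(\zeta_m):\mathbb{Q}]=\varphi(m)=\varphi(p^a)\varphi(n)$.

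The heart of the argument is to read off $e$ and $f$ from the Galois group $\mathrm{Gal}(\mathbb{Q}(\zeta_m)/\mathbb{Q})\cong(\mathbb{Z}/p^a\mathbb{Z})^{\times}\times(\mathbb{Z}/n\mathbb{Z})^{\times}$, the isomorphism coming from the Chinese Remainder Theorem. I would identify the inertia group $I(\mathfrak{P})$ with the factor $(\mathbb{Z}/p^a\mathbb{Z})^{\times}\times\{1\}$: its projection to $\mathrm{Gal}(\mathbb{Q}(\zeta_n)/\mathbb{Q})$ is trivial because $p$ is unramified in $\mathbb{Q}(\zeta_n)$, while its projection to $\mathrm{Gal}(\mathbb{Q}(\zeta_{p^a})/\mathbb{Q})$ is everything because $p$ is totally ramified there. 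This yields $e=|I(\mathfrak{P})|=\varphi(p^a)$. For the residue degree, modulo inertia the decomposition group $D(\mathfrak{P})/I(\mathfrak{P})$ is cyclic, generated by the Frobenius at $p$; under the identification of this quotient with a subgroup of $\mathrm{Gal}(\mathbb{Q}(\zeta_n)/\mathbb{Q})\cong(\mathbb{Z}/n\mathbb{Z})^{\times}$, the Frobenius is the automorphism $\zeta_n\mapsto\zeta_n^{\,p}$, whose order is exactly the order of $p$ modulo $n$. Hence $f=f_i$. Combining, $g=\varphi(m)/(ef)=\varphi(p^a)\varphi(n)/(\varphi(p^a)f_i)=\varphi(n)/f_i=g_i$, and since all primes above $p$ are conjugate we obtain $(p)=(P_1\cdots P_{g_i})^{\varphi(p^a)}$ with the $P_j$ distinct and of common degree $f_i$.

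I expect the \textbf{main obstacle} to be the rigorous identification of the inertia and decomposition groups inside the product $(\mathbb{Z}/p^a\mathbb{Z})^{\times}\times(\mathbb{Z}/n\mathbb{Z})^{\times}$, that is, proving that the ramification is concentrated entirely in the $p^a$-factor and the splitting entirely in the $n$-factor, with no interference between them. The clean way to justify this is through linear disjointness: $\mathbb{Q}(\zeta_{p^a})\cap\mathbb{Q}(\zeta_n)=\mathbb{Q}$ and the discriminants of the two subfields are coprime (one is a power of $p$, the other is prime to $p$), so the inertia and residue data of the compositum are the products of the corresponding data of the two factors. Once this multiplicativity is established the index counting is immediate, and everything else reduces to the two standard cyclotomic facts already at hand: the total ramification of $p$ in $\mathbb{Q}(\zeta_{p^a})$ and Theorem \ref{g-1} governing the unramified part.
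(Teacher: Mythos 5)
Your proposal is correct, but note that the paper offers no proof of this statement to compare against: it is quoted verbatim as a textbook result (Neukirch, Proposition 1.10.3), so the relevant comparison is with Neukirch's own argument. Neukirch proves it via the Kummer--Dedekind correspondence: since the ring of integers of $\mathbb{Q}(\zeta_m)$ is $\mathbb{Z}[\zeta_m]$, the splitting of $(p)$ mirrors the factorization of the $m$-th cyclotomic polynomial modulo $p$, and one computes directly that $\Phi_m \equiv \Phi_{n}^{\varphi(p^{a})} \pmod{p}$ with $n = m/p^{a}$, where $\Phi_{n}$ modulo $p$ is a product of distinct irreducibles of degree $f_i = \mathrm{ord}_n(p)$. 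You instead run the standard Galois-theoretic argument through the compositum $\mathbb{Q}(\zeta_m) = \mathbb{Q}(\zeta_{p^a})\mathbb{Q}(\zeta_n)$, reading off $e$ and $f$ from inertia and Frobenius; this is sound, and the step you flag as the main obstacle is in fact already handled rigorously by your projection argument: the restriction maps send inertia onto inertia, so $I(\mathfrak{P})$ is contained in $(\mathbb{Z}/p^a\mathbb{Z})^{\times}\times\{1\}$ (triviality of inertia in the unramified factor) and surjects onto $(\mathbb{Z}/p^a\mathbb{Z})^{\times}$ (total ramification in the other factor), forcing equality and $e=\varphi(p^a)$; likewise $D(\mathfrak{P})/I(\mathfrak{P})$ injects into $(\mathbb{Z}/n\mathbb{Z})^{\times}$ precisely because the kernel of restriction to $\mathbb{Q}(\zeta_n)$ is $\mathrm{Gal}(\mathbb{Q}(\zeta_m)/\mathbb{Q}(\zeta_n)) = (\mathbb{Z}/p^a\mathbb{Z})^{\times}\times\{1\} = I(\mathfrak{P})$, and its generator restricts to the Frobenius $\zeta_n\mapsto\zeta_n^{p}$ of order $f_i$, so the appeal to coprime discriminants in your last paragraph is unnecessary. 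The trade-off: Neukirch's route is computational and self-contained once one knows $\mathcal{O}=\mathbb{Z}[\zeta_m]$, whereas yours requires the functoriality of inertia and decomposition groups under subextensions but generalizes immediately to other composita of arithmetically disjoint abelian extensions.
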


Let $F$ be a cyclotomic field generated by $\zeta_m$ over $\mathbb{Q}$ and $G$ the Galois group of $F/\mathbb{Q}$. Then we have the following isomorphism of groups.

\begin{lemma}\label{BOOK1}{\rm \cite[Corollary 13.2.2]{IR1992}}
Let $F=\mathbb{Q}(\zeta_m)$ and $G$ the Galois group of $F/\mathbb{Q}$. Then there is an isomorphism $\theta:G \rightarrow \mathbb{Z}_m^*$ such that for $\sigma\in G$
$$\sigma(\zeta_m)=\zeta_m^{\theta(\sigma)},$$
where $\mathbb{Z}_m^*=\{1\leq a\leq m:~(a,m)=1\}.$
\end{lemma}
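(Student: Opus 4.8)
The plan is to exhibit $\theta$ explicitly through the action of the Galois group on the single generator $\zeta_m$, verify the homomorphism and injectivity axioms by routine computation, and reserve surjectivity for last since it rests on the deepest input. First I would record that $F=\mathbb{Q}(\zeta_m)$ is the splitting field over $\mathbb{Q}$ of the polynomial $x^m-1$, whose $m$ roots $\{\zeta_m^a:0\le a\le m-1\}$ are distinct; hence $x^m-1$ is separable, $F/\mathbb{Q}$ is a finite Galois extension, and $|G|=[F:\mathbb{Q}]$.

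Next I would observe that any $\sigma\in G$ permutes the roots of $x^m-1$ while fixing $\mathbb{Q}$, so it preserves the multiplicative order of each root. Since $\zeta_m$ has order exactly $m$, its image $\sigma(\zeta_m)$ is again a primitive $m$-th root of unity, i.e. $\sigma(\zeta_m)=\zeta_m^{a}$ for some $a$ with $\gcd(a,m)=1$; taking $a$ in the range $1\le a\le m$ makes it well defined and dependent only on $\sigma$, so setting $\theta(\sigma)=a$ yields a well-defined map $\theta:G\to\mathbb{Z}_m^*$. The homomorphism property follows from a short computation: for $\sigma,\tau\in G$,
\[
(\sigma\tau)(\zeta_m)=\sigma\!\left(\zeta_m^{\theta(\tau)}\right)=\bigl(\sigma(\zeta_m)\bigr)^{\theta(\tau)}=\zeta_m^{\theta(\sigma)\theta(\tau)},
\]
whence $\theta(\sigma\tau)\equiv\theta(\sigma)\theta(\tau)\pmod m$. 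Injectivity is immediate: if $\theta(\sigma)=1$ then $\sigma$ fixes $\zeta_m$, and as $\zeta_m$ generates $F$ over $\mathbb{Q}$ this forces $\sigma=\mathrm{id}$.

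It then remains to prove surjectivity, and this is the main obstacle. Since $\theta$ is an injective homomorphism between finite groups, it suffices to show $|G|=|\mathbb{Z}_m^*|=\varphi(m)$; equivalently, because $|G|=[\mathbb{Q}(\zeta_m):\mathbb{Q}]$, to show that the minimal polynomial of $\zeta_m$ over $\mathbb{Q}$ has degree $\varphi(m)$. This is precisely the irreducibility over $\mathbb{Q}$ of the $m$-th cyclotomic polynomial $\Phi_m(x)=\prod_{\gcd(a,m)=1}(x-\zeta_m^a)$. I would establish it by the classical argument: $\Phi_m$ is monic with integer coefficients, so by Gauss's lemma irreducibility over $\mathbb{Q}$ reduces to irreducibility over $\mathbb{Z}$; writing $\Phi_m=fg$ with $f$ the (integer, monic) minimal polynomial of $\zeta_m$, one shows that whenever $\zeta$ is a root of $f$ and $p\nmid m$ is prime, then $\zeta^p$ is again a root of $f$, by reducing modulo $p$ and using that the Frobenius substitution $x\mapsto x^p$ acts injectively on the roots of the separable polynomial $x^m-1$ over $\mathbb{F}_p$. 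Iterating this over all primes not dividing $m$ shows every primitive root $\zeta_m^a$ with $\gcd(a,m)=1$ is a root of $f$, forcing $\deg f=\varphi(m)$ and hence $f=\Phi_m$. This gives $|G|=\varphi(m)=|\mathbb{Z}_m^*|$, so the injection $\theta$ is a bijection and therefore an isomorphism, completing the proof.
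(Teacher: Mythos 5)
Your proposal is correct, but there is nothing in the paper to compare it against: the paper states this lemma as a quoted result, citing \cite[Corollary 13.2.2]{IR1992}, and supplies no proof of its own. Your argument --- defining $\theta$ by $\sigma(\zeta_m)=\zeta_m^{\theta(\sigma)}$, checking the homomorphism and injectivity properties directly, and obtaining surjectivity from $[\mathbb{Q}(\zeta_m):\mathbb{Q}]=\varphi(m)$ via Dedekind's classical irreducibility argument for $\Phi_m$ (reduction mod $p$ and separability of $x^m-1$ over $\mathbb{F}_p$ for $p\nmid m$) --- is the standard, complete proof of the cited textbook fact, with only routine compressions (e.g.\ that $\Phi_m\in\mathbb{Z}[x]$ and that the minimal polynomial $f$ divides $\Phi_m$ in $\mathbb{Z}[x]$ by Gauss's lemma) left implicit.
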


With the automorphism Gal$(F/\mathbb{Q})\cong \mathbb{Z}_{m}^*: \sigma_a\mapsto a\pmod {m}$. Let $P$ be a prime ideal of the ring of integers in $F$ and $G$ the Galois group of $F/\mathbb{Q}$. Define $G(P)=\{\sigma\in G:~ \sigma(P)=P\}$. Then we have the following two lemmas.

\begin{lemma}\label{BOOK2}{\rm \cite[Chapter 13]{IR1992}}
Let $p$ be a prime and $m$ a positive integer with $p\nmid m$. Let $(p)=P_1P_2\cdots P_g$ in $\mathbb{Q}(\zeta_m)$ and $P$ be one of the $P_i$. Then $G(P)$ is a cyclic group generated by $\sigma_{p}$.
\end{lemma}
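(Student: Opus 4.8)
The plan is to exploit the standard interaction between the decomposition group and the Frobenius automorphism. First I would recall two background facts: the ring of integers of $F=\mathbb{Q}(\zeta_m)$ is exactly $\mathbb{Z}[\zeta_m]$, and since $F/\mathbb{Q}$ is Galois, the group $G$ acts transitively on the set $\{P_1,\dots,P_g\}$ of primes lying over $(p)$. Granting transitivity, the orbit--stabilizer theorem applied to this action gives $|G(P)|=|G|/g=\varphi(m)/g$. By Theorem \ref{g-1} we have $g=\varphi(m)/f$, where $f$ is the multiplicative order of $p$ modulo $m$, so $|G(P)|=f$. Thus it suffices to produce a single element of $G(P)$ of order $f$; I will show that $\sigma_p$ is such an element, which forces $G(P)=\langle\sigma_p\rangle$.

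The key step is to verify that $\sigma_p$ fixes $P$, i.e. $\sigma_p\in G(P)$, by identifying $\sigma_p$ with the Frobenius. Writing an arbitrary $\alpha\in\mathbb{Z}[\zeta_m]$ as $\alpha=\sum_i a_i\zeta_m^i$ with $a_i\in\mathbb{Z}$ and working modulo the ideal $(p)$ (whose residue ring has characteristic $p$), the ``freshman's dream'' together with Fermat's little theorem $a_i^p\equiv a_i\pmod p$ yields $\alpha^p\equiv\sum_i a_i\zeta_m^{ip}\pmod{(p)}$. By Lemma \ref{BOOK1} the right-hand side is precisely $\sigma_p(\alpha)$, so $\sigma_p(\alpha)\equiv\alpha^p\pmod{(p)}$ for every $\alpha\in\mathbb{Z}[\zeta_m]$, and hence also modulo $P$ since $(p)\subseteq P$.

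From this congruence I would deduce $\sigma_p(P)=P$. If $\alpha\in P$ then $\alpha^p\in P$, and the congruence above forces $\sigma_p(\alpha)\in P$; hence $\sigma_p(P)\subseteq P$. Because $\sigma_p$ is a ring automorphism it carries the nonzero prime (hence maximal) ideal $P$ to another maximal ideal, and a maximal ideal contained in $P$ must equal $P$, so $\sigma_p(P)=P$ and $\sigma_p\in G(P)$. Finally, under the isomorphism $\theta$ of Lemma \ref{BOOK1} we have $\theta(\sigma_p)=p$, whose order in $\mathbb{Z}_m^*$ is exactly $f$; thus $\sigma_p$ has order $f$ in $G$. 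Since $\langle\sigma_p\rangle\subseteq G(P)$ and both groups have order $f$, they coincide, proving that $G(P)$ is cyclic and generated by $\sigma_p$.

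I expect the only genuinely delicate point to be the clean passage from the mod-$(p)$ Frobenius congruence to the invariance $\sigma_p(P)=P$ without circularity; the counting via orbit--stabilizer and the order computation for $\sigma_p$ are then routine. A secondary point worth stating carefully is the use of the identity between the ring of integers of $F$ and $\mathbb{Z}[\zeta_m]$, which guarantees that checking the congruence on the integral generators $\zeta_m^i$ suffices to establish it for all $\alpha$.
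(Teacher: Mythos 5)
Your proof is correct, and there is nothing in the paper to compare it against: the paper states Lemma \ref{BOOK2} as a quoted result from \cite[Chapter 13]{IR1992} without proof. Your argument --- the Frobenius congruence $\sigma_p(\alpha)\equiv\alpha^p\pmod{(p)}$ forcing $\sigma_p(P)=P$ by maximality, combined with the orbit--stabilizer count $|G(P)|=\varphi(m)/g=f$ from Theorem \ref{g-1} and the order-$f$ computation for $\sigma_p$ via Lemma \ref{BOOK1} --- is exactly the standard textbook proof in that reference, with all the delicate steps (unramifiedness via $p\nmid m$, transitivity of the Galois action, and $\mathbb{Z}[\zeta_m]$ being the full ring of integers) handled correctly.
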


\begin{remark}
If there exists an integer $j$ such that $p^j\equiv -1\pmod m$, by Lemma \ref{BOOK2} it holds that $\sigma_{-1}(P_i)=P_i$, where $(p)=P_1P_2\cdots P_{g}$ and $1\leq i\leq g.$
\end{remark}

\begin{lemma}\label{KSW}{\rm \cite{KSW1985}}
Let $p$ be a prime and $m$ a positive integer with $p\nmid m$. Let $(p)=(P_1P_2\cdots P_{g})^{\varphi(p^{a})}$ in $\mathbb{Q}(\zeta_{p^am})$ and $P$ be one of the $P_i$. If
$$p^l\equiv -1\pmod m$$
for some integer $l$, then $\sigma_{-1}\in G(P)$.
\end{lemma}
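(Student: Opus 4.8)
The plan is to work in the tower $\mathbb{Q}\subset \mathbb{Q}(\zeta_m)\subset \mathbb{Q}(\zeta_{p^am})$ and to reduce the ramified statement to the unramified situation already settled by Lemma~\ref{BOOK2}. Write $F_0=\mathbb{Q}(\zeta_m)$ and $F=\mathbb{Q}(\zeta_{p^am})$, with rings of integers $R_0\subset F_0$ and $R\subset F$. Since $p\nmid m$, Theorem~\ref{g-1} gives an unramified factorization $(p)=Q_1\cdots Q_g$ in $F_0$, while Theorem~\ref{g-2} gives $(p)=(P_1\cdots P_g)^{\varphi(p^a)}$ in $F$; both have the same number $g=\varphi(m)/f$ of primes, where $f$ is the multiplicative order of $p$ modulo $m$. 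Since $Q_i:=P_i\cap R_0$ is one of the prime divisors of $(p)$ in $F_0$, after relabelling I may assume $P_i$ lies over $Q_i$ for each $i$.

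First I would handle the base field $F_0$. By Lemma~\ref{BOOK2}, the decomposition group $G(Q_i)=\{\sigma\in \mathrm{Gal}(F_0/\mathbb{Q}):\sigma(Q_i)=Q_i\}$ is cyclic, generated by $\sigma_p$. Using the isomorphism $\mathrm{Gal}(F_0/\mathbb{Q})\cong \mathbb{Z}_m^*$ of Lemma~\ref{BOOK1}, the hypothesis $p^l\equiv -1\pmod m$ shows that complex conjugation on $F_0$ is $\sigma_{-1}=\sigma_{p^l}=\sigma_p^{\,l}\in\langle\sigma_p\rangle=G(Q_i)$. Hence $\sigma_{-1}(Q_i)=Q_i$ for every $i$, which is exactly the content of the remark following Lemma~\ref{BOOK2}.

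Next I would transfer this to $F$. The key geometric fact is that $Q_i$ is \emph{totally ramified} in $F/F_0$: since $\gcd(p,m)=1$ we have $F=F_0(\zeta_{p^a})$ and $[F:F_0]=\varphi(p^a)$, and $p$ is totally ramified in $\mathbb{Q}(\zeta_{p^a})/\mathbb{Q}$ with ramification index $\varphi(p^a)$ while $F_0/\mathbb{Q}$ is unramified at $p$; comparing with Theorem~\ref{g-2} forces $e(P_i\mid Q_i)=\varphi(p^a)=[F:F_0]$, so $P_i$ is the \emph{unique} prime of $R$ lying over $Q_i$. Now $\sigma_{-1}\colon\zeta_{p^am}\mapsto\zeta_{p^am}^{-1}$ is an automorphism of $F$ fixing $\mathbb{Q}$, and its restriction to $F_0$ is complex conjugation there, since $\sigma_{-1}(\zeta_m)=\sigma_{-1}(\zeta_{p^am}^{p^a})=\zeta_m^{-1}$. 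Consequently
\[
\sigma_{-1}(P_i)\cap R_0=\sigma_{-1}(P_i\cap R_0)=\sigma_{-1}(Q_i)=Q_i,
\]
so $\sigma_{-1}(P_i)$ is a prime of $R$ lying over $Q_i$. By uniqueness, $\sigma_{-1}(P_i)=P_i$, i.e. $\sigma_{-1}\in G(P_i)$, which is the assertion.

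The step I expect to be the main obstacle is the ramification bookkeeping in the third paragraph: establishing that $Q_i$ is totally ramified in $F/F_0$ so that $P_i$ is the unique prime above it. Equivalently, one must show that the inertia group of $P$ in $\mathrm{Gal}(F/\mathbb{Q})\cong\mathbb{Z}_{p^a}^*\times\mathbb{Z}_m^*$ is precisely the $p$-part $\mathbb{Z}_{p^a}^*\times\{1\}$; granting this, the decomposition group is $G(P)=\mathbb{Z}_{p^a}^*\times\langle p\bmod m\rangle$, and since $-1\equiv p^l\pmod m$ lies in $\langle p\bmod m\rangle$ while $-1\bmod p^a$ is automatically a unit, the element $\sigma_{-1}\leftrightarrow(-1,-1)$ lies in $G(P)$ at once. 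This group-theoretic description (legitimate because $\mathbb{Z}_{p^am}^*$ is abelian, so all the $G(P_i)$ coincide) furnishes an alternative route to the conclusion that is essentially equivalent to the uniqueness argument above.
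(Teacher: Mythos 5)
Your proposal is correct, but note that there is nothing in the paper to compare it against: Lemma~\ref{KSW} is quoted from \cite{KSW1985} and the paper supplies no proof of it, so your argument stands as a self-contained derivation from the paper's own ingredients (Lemmas~\ref{BOOK1}, \ref{BOOK2} and Theorems~\ref{g-1}, \ref{g-2}). The reduction to the unramified level is sound: $\sigma_{-1}=\sigma_p^{\,l}$ fixes each $Q_i=P_i\cap R_0$ by Lemma~\ref{BOOK2}, and your ramification bookkeeping correctly yields $e(P_i\mid Q_i)=\varphi(p^a)=[F:F_0]$, forcing $P_i$ to be the unique prime above $Q_i$, whence $\sigma_{-1}(P_i)=P_i$. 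In fact the step you flag as the main obstacle can be bypassed even more cheaply: Theorems~\ref{g-1} and \ref{g-2} give the \emph{same} number $g=\varphi(m)/f$ of primes above $p$ in $F_0$ and in $F$, and since the surjection $P_i\mapsto P_i\cap R_0$ from primes of $F$ above $p$ onto primes of $F_0$ above $p$ is then a bijection between two $g$-element sets, uniqueness of the prime above each $Q_i$ is immediate without computing inertia degrees. Your closing decomposition-group description $G(P)=\mathbb{Z}_{p^a}^*\times\langle p\bmod m\rangle$ (legitimate, as you say, because the Galois group is abelian so all $G(P_i)$ coincide) is also correct and is essentially the argument in the original source; either route is acceptable.
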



The following is a result of Turyn \cite{T1965}.

\begin{lemma}{\rm \cite{T1965}}\label{T19}
Suppose that $A\in \mathbb{Z}[\zeta_m]$ satisfies
$$A\overline{A}\equiv 0\pmod {n^2}$$
for some positive integer $n$ which is self-conjugate modulo $m$. Then $A\equiv 0\pmod n.$
\end{lemma}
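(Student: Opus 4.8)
The plan is to pass from the congruence $A\overline{A}\equiv 0\pmod{n^2}$ to the divisibility $A\equiv 0\pmod n$ by comparing $P$-adic valuations at every prime ideal $P$ of the ring of integers $\mathbb{Z}[\zeta_m]$. Since $\mathbb{Z}[\zeta_m]$ is the ring of integers of the cyclotomic field $\mathbb{Q}(\zeta_m)$, it is a Dedekind domain, so for the elements $A$ and $n$ the condition $A\equiv 0\pmod n$ (that is, $A\in(n)$) is equivalent to $(n)\mid (A)$ as ideals, which in turn is equivalent to the valuation inequality $v_P(A)\geq v_P(n)$ holding at every prime ideal $P$. Thus it suffices to establish $v_P(A)\geq v_P(n)$ for all $P$, and I would set up the whole argument in terms of these valuations.

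For prime ideals $P$ not dividing $(n)$ one has $v_P(n)=0$, so the desired inequality is automatic. The work therefore concentrates on the prime ideals $P$ dividing $(n)$. For these, the key structural input is self-conjugacy: because $n$ is self-conjugate modulo $m$, Theorem \ref{self} guarantees that every prime ideal $P\mid (n)$ is invariant under complex conjugation, i.e. $\sigma_{-1}(P)=P$, where $\sigma_{-1}$ is the element of $\mathrm{Gal}(\mathbb{Q}(\zeta_m)/\mathbb{Q})$ sending $\zeta_m$ to $\zeta_m^{-1}$ under the isomorphism of Lemma \ref{BOOK1}.

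The central step is the valuation identity $v_P(\overline{A})=v_P(A)$ for such a self-conjugate $P$. Since $\overline{A}=\sigma_{-1}(A)$, and any Galois automorphism $\sigma$ satisfies $v_{\mathfrak{P}}(\sigma(x))=v_{\sigma^{-1}(\mathfrak{P})}(x)$, I would compute $v_P(\overline{A})=v_P(\sigma_{-1}(A))=v_{\sigma_{-1}(P)}(A)=v_P(A)$, using $\sigma_{-1}^{-1}=\sigma_{-1}$ to rewrite the index and the self-conjugacy $\sigma_{-1}(P)=P$ in the last equality. Feeding the hypothesis $(n^2)\mid (A\overline{A})$ into this identity then gives $2v_P(A)=v_P(A)+v_P(\overline{A})=v_P(A\overline{A})\geq v_P(n^2)=2v_P(n)$, whence $v_P(A)\geq v_P(n)$. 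Combining this with the trivial case of primes not dividing $(n)$ yields $v_P(A)\geq v_P(n)$ at every prime ideal $P$, so $(n)\mid (A)$ and therefore $A\equiv 0\pmod n$.

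The only genuine obstacle is the valuation identity $v_P(\overline{A})=v_P(A)$, and it hinges entirely on self-conjugacy; once Theorem \ref{self} supplies $\sigma_{-1}(P)=P$, the remainder is routine valuation bookkeeping. It is worth emphasizing that the self-conjugacy hypothesis cannot be dropped: without it, conjugation could interchange two distinct prime divisors $P$ and $\overline{P}$ of $(n)$, and the hypothesis would then only force the combined bound $v_P(A)+v_{\overline{P}}(A)\geq 2v_P(n)$, which does not separate into the individual inequality $v_P(A)\geq v_P(n)$ required for the conclusion.
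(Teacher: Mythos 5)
Your proof is correct: the key identity $v_P(\overline{A})=v_{\sigma_{-1}(P)}(A)=v_P(A)$ for each self-conjugate prime $P\mid (n)$, combined with $2v_P(A)=v_P(A\overline{A})\geq 2v_P(n)$ and the trivial case $P\nmid(n)$, gives $(n)\mid(A)$ in the Dedekind domain $\mathbb{Z}[\zeta_m]$, which is exactly the conclusion. The paper cites this lemma from Turyn without reproducing a proof, but its own proof of the generalization, Lemma \ref{X1}, runs on precisely the same mechanism—explicit prime-ideal factorization of the modulus together with conjugation-invariance of each prime divisor supplied by Theorem \ref{self} (via Lemmas \ref{BOOK2} and \ref{KSW})—so your argument is the same approach, merely phrased in valuation-theoretic rather than ideal-factorization language.
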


Let $u$ be a positive integer and let $p$ be a prime. Let $p^a|| u$ be the integer $a$ defined by the property that $p^a\mid u$ but that $p^{a+1} \nmid u$.

In order to establish the exponent bound in Theorem \ref{SEDF1}, we will need the following generalization of Lemma \ref{T19}.


\begin{lemma}\label{X1}
Let $u=\prod\limits_{i=1}^{t}p_i^{u_i}$ be the prime power decomposition of $u$. Let $w$ be an integer such that $u$ is self-conjugate modulo $w$. Let $p_i\mid w$ for $i=1,\ldots,s$ and $p_i\nmid w$ for $i=s+1,\ldots,t$. Suppose that $X,\overline{X} \in \mathbb{Z}[\zeta_{w}]$ satisfy
$X\overline{X}\equiv 0 \pmod u$. Then $$X \equiv 0\pmod {\prod\limits_{i=1}^{s}p_i^{\lfloor\frac{u_i}{2}\rfloor}\prod\limits_{i=s+1}^{t}p_i^{\lfloor\frac{u_i+1}{2}\rfloor}}.$$
Furthermore, for any $i$, $s+1\leq i\leq t$, if $p_i^{u_i}||u$ and $X\overline{X}=u$, then $u_i$ is even.
\end{lemma}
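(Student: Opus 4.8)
The plan is to prove the divisibility statement prime-by-prime, reducing everything to Lemma \ref{T19} (Turyn's result), and then handle the parity claim separately by a valuation count. First I would fix a prime $p_i$ and localize: since $X\overline{X}\equiv 0\pmod{u}$ and $u=\prod_{i=1}^t p_i^{u_i}$, in particular $X\overline{X}\equiv 0\pmod{p_i^{u_i}}$. Because $u$ is self-conjugate modulo $w$, each $p_i$ is self-conjugate modulo $w$, so I may apply Turyn's lemma with the self-conjugate integer $n=p_i^{\lceil u_i/2\rceil}$ whenever $X\overline{X}\equiv 0\pmod{p_i^{2\lceil u_i/2\rceil}}$. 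The subtlety is that $2\lceil u_i/2\rceil$ may exceed $u_i$ by one, so I must split on the parity of $u_i$ and, more importantly, on whether $p_i\mid w$.

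The key distinction is between ramified primes ($p_i\mid w$, i.e. $1\le i\le s$) and unramified primes ($p_i\nmid w$, $s+1\le i\le t$). For a ramified prime, $p_i$ itself is ramified in $\mathbb{Z}[\zeta_w]$, so I can only extract the guaranteed exponent $\lfloor u_i/2\rfloor$: writing $u_i=2\lfloor u_i/2\rfloor+\epsilon$ with $\epsilon\in\{0,1\}$, Turyn's lemma applied with $n=p_i^{\lfloor u_i/2\rfloor}$ gives $X\equiv 0\pmod{p_i^{\lfloor u_i/2\rfloor}}$, since $X\overline{X}\equiv 0\pmod{p_i^{2\lfloor u_i/2\rfloor}}$ already holds as $2\lfloor u_i/2\rfloor\le u_i$. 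For an unramified prime, the extra structure from Lemma \ref{KSW} (or the decomposition of Theorem \ref{g-1} together with self-conjugacy fixing each prime ideal divisor) lets me use the fact that $p_i$ factors into distinct self-conjugate prime ideals $P$ with $\overline{P}=P$; the valuation $v_P(X\overline{X})=2v_P(X)\ge u_i$ forces $v_P(X)\ge\lceil u_i/2\rceil=\lfloor(u_i+1)/2\rfloor$, which is the sharper bound claimed. Collecting these exponents across all $p_i$ and invoking the Chinese-remainder-type coprimality of the distinct prime powers yields the combined congruence $X\equiv 0\pmod{\prod_{i=1}^s p_i^{\lfloor u_i/2\rfloor}\prod_{i=s+1}^t p_i^{\lfloor(u_i+1)/2\rfloor}}$.

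For the final parity assertion, I would argue by contradiction using $P$-adic valuations. Suppose $s+1\le i\le t$, that $p_i^{u_i}\|u$, that $X\overline{X}=u$ exactly, and that $u_i$ is odd. Pick a prime ideal $P\mid(p_i)$ in $\mathbb{Z}[\zeta_w]$; since $p_i$ is unramified and self-conjugate, $\overline{P}=P$, so $v_P(\overline{X})=v_P(X)$ and hence $v_P(X\overline{X})=2v_P(X)$ is even. But $v_P(u)=v_P(p_i^{u_i})=u_i\cdot v_P(p_i)=u_i$ (using that $P$ is unramified over $p_i$, so $v_P(p_i)=1$), which is odd. This contradicts $X\overline{X}=u$, forcing $u_i$ to be even.

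The main obstacle I anticipate is the unramified case: justifying rigorously that self-conjugacy of $p_i$ modulo $w$ genuinely implies $\overline{P}=P$ for each prime ideal $P\mid(p_i)$, and that the decomposition separates the two products cleanly. This is exactly where Theorem \ref{self}, Lemma \ref{BOOK2}, and Lemma \ref{KSW} must be combined: self-conjugacy modulo $w$ gives an exponent $r$ with $p_i^r\equiv -1\pmod{w/p_i^{v_{p_i}(w)}}$, and via the isomorphism $\mathrm{Gal}(\mathbb{Q}(\zeta_w)/\mathbb{Q})\cong\mathbb{Z}_w^*$ this places $\sigma_{-1}$ in the decomposition group $G(P)$, which is precisely the statement $\overline{P}=P$. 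Handling the ramified primes ($1\le i\le s$) at the same time requires care, because there the factorization involves the ramification exponent $\varphi(p_i^{a_i})$ from Theorem \ref{g-2}, and I must be sure not to overcount valuation and thereby claim a bound stronger than $\lfloor u_i/2\rfloor$.
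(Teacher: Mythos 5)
Your proposal is correct and follows essentially the same route as the paper: both split the primes into those dividing $w$ and those coprime to $w$, use self-conjugacy (via Lemma \ref{BOOK2} and Lemma \ref{KSW}) to conclude that complex conjugation fixes every prime ideal above each $p_i$, extract the exponents $\lfloor u_i/2\rfloor$ (ramified) and $\lfloor (u_i+1)/2\rfloor$ (unramified) from the factorizations of Theorems \ref{g-1} and \ref{g-2}, and prove the final parity claim by the same even-valuation argument at a self-conjugate prime ideal. The only cosmetic difference is that you delegate the ramified case to Turyn's Lemma \ref{T19} with $n=p_i^{\lfloor u_i/2\rfloor}$ instead of reading the bound off the explicit factorization $(p_i)=(P_{i1}\cdots P_{ig_i})^{\varphi(p_i^{a_i})}$ as the paper does; both steps are valid and yield the same exponent.
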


\begin{proof}
Since $p_i\mid w$ for $1\leq i\leq s$ and $p_i \nmid w$ for $s<i\leq t,$ we can set $w=(\prod\limits_{i=1}^{s}p_i^{a_i})(\prod\limits_{i=1}^{s'}q_i^{b_i})$, where $p_1,\ldots, p_s, q_1,\ldots,q_{s'}$ are distinct primes. Since $p_i \nmid w$ for $s<i\leq t$, by Theorem \ref{g-1} we obtain that each prime divisor $(p_i)$ has the factorization in $\mathbb{Z}[\zeta_{w} ]$
$$(p_i)=(P_{i1}P_{i2}\ldots P_{ig_i}),$$
where $f_i$ is the smallest positive integer such that $p_i^{f_i}\equiv 1\pmod w$ and $g_i=\frac{\varphi(w)}{f_i}$. 
Since $p_i \mid w$ for $1\leq i\leq s$, by Theorem \ref{g-2} we obtain that each prime divisor $(p_i)$ has the factorization in $\mathbb{Z}[\zeta_{w} ]$
$$(p_i)=(P_{i1}P_{i2}\ldots P_{ig_i})^{\varphi(p_i^{a_i})}.$$
 Then, $(u)$ has the factorization in $\mathbb{Z}[\zeta_{w} ]$ 
$$(u)=\prod\limits_{i=1}^s(p_{i})^{u_{i}}\prod\limits_{i=s+1}^t(p_{i})^{u_{i}}=\prod\limits_{i=1}^{s}(P_{i1}P_{i2}\ldots P_{ig_i})^{\varphi(p_i^{a_i})u_i}\prod\limits_{i=s+1}^{t}(P_{i1}P_{i2}\ldots P_{ig_i})^{u_i}.$$
Since $u$ is self-conjugate modulo $w$ and $X\overline{X}\equiv 0 \pmod u$, by Lemmas \ref{BOOK2} and \ref{KSW} we obtain
$$X\equiv0 \pmod{\prod\limits_{i=1}^{s}(P_{i1}P_{i2}\ldots P_{ig_i})^{\left\lfloor\frac{\varphi(p_i^{a_i})u_i+1}{2}\right\rfloor}\prod\limits_{i=s+1}^{t}(P_{i1}P_{i2}\ldots P_{ig_i})^{\left\lfloor\frac{u_i+1}{2}\right\rfloor}}.$$ Hence, we have
$$X\equiv0 \pmod{\prod\limits_{i=1}^{s}p_i^{\left\lfloor\frac{u_i}{2}\right\rfloor}\prod\limits_{i=s+1}^{t}p_i^{\left\lfloor\frac{u_i+1}{2}\right\rfloor}}.$$

Now suppose $p_i^{u_i}||u$ for some $i$, $s+1\leq i\leq t$. Since $u$ is self-conjugate modulo $w$, then we have that $p_i$ is self-conjugate modulo $w$. It holds that
$$(P_{i1}P_{i2}\ldots P_{ig_i})^{\left\lfloor\frac{u_i+1}{2}\right\rfloor} \mid X,\ {\rm and}\ \ \overline{P_{i1}P_{i2}\ldots P_{ig_i}}=P_{i1}P_{i2}\ldots P_{ig_i}.$$
Hence, we have
$$p_i^{2\left\lfloor\frac{u_i+1}{2}\right\rfloor}=(P_{i1}P_{i2}\ldots P_{ig_i})^{2\left\lfloor\frac{u_i+1}{2}\right\rfloor}\mid  X \overline{X}=u.$$
Since  $p_i^{u_i}||u$, we have
$$p_i^{u_i}=(P_{i1}P_{i2}\ldots P_{ig_i})^{u_i}|| X \overline{X}.$$
Therefore, $u_i$ is even. This completes the proof. \qed
\end{proof}

\begin{remark}\label{upi}
Let $u$ be an integer and let $p$ be a prime divisor of $u$ such that $p^c|| u$. Let $w$ be an integer such that $p\nmid w$ and $p$ is self-conjugate modulo $w$. Suppose that $X,\overline{X} \in \mathbb{Z}[\zeta_{w}]$ satisfy $X\overline{X}=u$, from the proof of Lemma \ref{X1}, we know that $c$ is an even integer.
\end{remark}

We are in a position to prove the following exponent bound.
\begin{theorem}\label{SEDF1}
 Suppose there exists a $(v,m, k, \lambda)$-SEDF in $G$ with $m\geq 3$. Let $d>2$ be an integer such that $d\mid \text{exp}(G)$. Let $\lambda'$ be a divisor of $\lambda$ such that  
 $\lambda'$ is self-conjugate modulo $d$ and $\gcd(\lambda',d)=1$.
 Then there exists a divisor $c$ of $\lambda'$ such that
$$ \sqrt{\lambda'}\leq c \leq 2^{t-1}\frac{v}{d},$$
where $t$ is the number of distinct prime divisors of $d$.
\end{theorem}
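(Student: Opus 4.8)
The plan is to extract the divisor $c$ from the prime-ideal valuations of a single character value, controlled by the balance relation coming from the SEDF equation. First I would fix a character of order $d$: since $d\mid\text{exp}(G)$ there is a cyclic subgroup $H\le G$ of order $d$, and I take $\rho:G\to H$ and a generator $\widehat\chi$ of $\widehat H$ as in Corollary \ref{c1}, so that $\chi=\widehat\chi\circ\rho\in\widehat G$ is nonprincipal with values in $\mathbb{Z}[\zeta_d]$. Applying $\chi$ to (\ref{SEDF11}) for $j=1$ (equivalently, using (\ref{Dj1})) gives $\chi(D_1)\overline{\chi(D\setminus D_1)}=-\lambda$; in particular both $\chi(D_1)$ and $\chi(D\setminus D_1)$ are nonzero. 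By Lemma \ref{integer1}, and by the direct computation $|\chi(D_1)|^2=\lambda$ in the remaining case $\chi(D)=0$, the quantities $|\chi(D_1)|^2$ and $|\chi(D\setminus D_1)|^2$ are positive integers.

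Next I would run the local analysis at each prime $p_i\mid\lambda'$, writing $\lambda'=\prod_i p_i^{e_i}$. Because $\gcd(\lambda',d)=1$ each $p_i$ is unramified in $\mathbb{Z}[\zeta_d]$, and because $\lambda'$ is self-conjugate modulo $d$, Theorem \ref{self} shows every prime ideal $P$ above $p_i$ satisfies $\overline P=P$. For such $P$ we then have $v_P(|\chi(D_1)|^2)=2\,v_P(\chi(D_1))$, and since $|\chi(D_1)|^2$ is a rational integer this value is the same for all $P$ above $p_i$; I set $\gamma_i:=v_P(\chi(D_1))$ and likewise $\delta_i:=v_P(\chi(D\setminus D_1))$. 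Taking $P$-adic valuations in $\chi(D_1)\overline{\chi(D\setminus D_1)}=-\lambda$ and using $\overline P=P$ yields the balance relation $\gamma_i+\delta_i=v_{p_i}(\lambda)\ge e_i$ for every $i$.

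Then I would assemble $c$. Put $c_1=\prod_i p_i^{\min(\gamma_i,e_i)}$ and $c_2=\prod_i p_i^{\min(\delta_i,e_i)}$; these are exactly the largest divisors of $\lambda'$ dividing $\chi(D_1)$ and $\chi(D\setminus D_1)$ in $\mathbb{Z}[\zeta_d]$, the constancy of $\gamma_i,\delta_i$ over $P\mid p_i$ being what makes the rational integers $c_1,c_2$ genuinely divide these algebraic integers. The balance relation gives $\min(\gamma_i,e_i)+\min(\delta_i,e_i)\ge e_i$ for each $i$, hence $\lambda'\mid c_1c_2$ and so $c_1c_2\ge\lambda'$. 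Setting $c=\max(c_1,c_2)$ gives $c\mid\lambda'$ and $c\ge\sqrt{c_1c_2}\ge\sqrt{\lambda'}$. For the upper bound, say $c=c_1$: by Corollary \ref{c1} I write $\chi(D_1)=\widehat\chi(\rho(D_1))=\sum_{x\in B_{d,1}}c_x x$ with every $|c_x|\le 2^{t-1}\frac{v}{d}$; since $c_1$ divides the nonzero algebraic integer $\chi(D_1)$ and $B_{d,1}$ is an integral basis, $c_1$ divides each $c_x$, and comparing with a nonzero coefficient gives $c\le 2^{t-1}\frac{v}{d}$.

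The main obstacle is that the naive candidate — the gcd of $\lambda'$ with the single value $\chi(D_1)$ — need not reach $\sqrt{\lambda'}$, since the balance relation only forces $\gamma_i+\delta_i\ge e_i$ and the weight may sit almost entirely on $\delta_i$ for some primes and on $\gamma_i$ for others. The device that repairs this is to bound $\lambda'$ by the product $c_1c_2$ and then keep the larger factor; for this to be valid it is essential that $\gamma_i$ and $\delta_i$ be constant across the prime ideals above each $p_i$, which rests simultaneously on the integrality of $|\chi(D_1)|^2$ (where $m\ge 3$ enters through Lemma \ref{integer1}) and on the self-conjugacy hypothesis (through Theorem \ref{self}). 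Pinning down these two facts, and hence the legitimacy of the per-prime exponents, is the step that needs the most care.
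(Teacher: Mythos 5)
Your proof is correct, but it takes a genuinely different route from the paper's. The paper splits on $\chi(D)=0$ versus $\chi(D)\neq 0$: in the first case it quotes Lemma \ref{X1} to get $\prod_i p_i^{\lfloor (z_i+1)/2\rfloor}\mid\chi(D_j)$, and in the second it invokes the full SEDF structure from Equation (\ref{AX}) — the parametrization by $(a_{\chi},b_{\chi})$, the identities $|\chi(D_l)|^2=\frac{(b_{\chi}+a_{\chi})\lambda}{b_{\chi}-a_{\chi}}$ and $|\chi(D_{l'})|^2=\frac{(b_{\chi}-a_{\chi})\lambda}{b_{\chi}+a_{\chi}}$, the evenness of the exponents $w_i$ (again via Lemma \ref{X1}), and the gcd bookkeeping with $u_1,u_2,u_3,u_4=\sqrt{u_1}$. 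You bypass all of this by working uniformly from Equation (\ref{Dj1}) in the form $\chi(D_1)\overline{\chi(D\setminus D_1)}=-\lambda$, which holds for every nonprincipal $\chi$, and doing the prime-ideal bookkeeping directly: $\overline{P}=P$ from Theorem \ref{self}, unramifiedness from $\gcd(\lambda',d)=1$, and the balance relation $\gamma_i+\delta_i\geq e_i$. Your identification of the crux is exactly right: the per-prime exponents $\gamma_i,\delta_i$ must be constant across the ideals $P\mid p_i$ (otherwise the minima over $P$ could sit on different ideals and the balance relation would not descend to them), and this constancy is secured by the rationality of $|\chi(D_1)|^2$ — Lemma \ref{integer1} when $\chi(D)\neq 0$, which is the only place $m\geq 3$ enters, and the direct computation $|\chi(D_1)|^2=\lambda$ when $\chi(D)=0$. (In fact, since $\mathbb{Q}(\zeta_d)/\mathbb{Q}$ is Galois, $v_P$ of a rational integer is automatically constant over $P\mid p_i$, so this step is airtight.) The final device is shared: your $\lambda'\mid c_1c_2$, $c=\max(c_1,c_2)$ is the same trick as the paper's $\lambda'\mid\gcd(\lambda',u_2u_4)\gcd(\lambda',u_3u_4)$ with the larger factor kept, and both proofs close with the coefficient bound of Corollary \ref{c1} applied to a nonzero value. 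What your version buys is economy and generality — it needs no case split, no $(a_{\chi},b_{\chi})$ computation, and no appeal to Lemma \ref{X1}, since it proves the needed special case of that lemma on the fly; it would apply verbatim to any pair $X,Y\in\mathbb{Z}[\zeta_d]$ with $X\overline{Y}=-\lambda$ and $|X|^2\in\mathbb{Z}$. What the paper's heavier route buys is the explicit structural data ($a_{\chi},b_{\chi}$, evenness of valuations, the precise divisors $u_3u_4$) that it reuses later, e.g.\ in Corollary \ref{pf1} and Theorems \ref{pf} and \ref{pfK}; for Theorem \ref{SEDF1} alone, your argument is the leaner of the two.
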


\begin{proof}
Let $\{D_1,\ldots, D_m\}$ be a $(v,m, k, \lambda)$-SEDF and set $D=\bigcup\limits_{i=1}^m D_i$. Since $d \mid \text{exp}(G)$, there exists a nonprincipal character $\chi$ of $G$ of order $d$. Set $\lambda'=\prod\limits_{i=1}^{s}p_i^{z_i}$. We distinguish two cases.

{\bf Case (i)} $\chi(D)=0$.

By Equation (\ref{Dj1}), we have $\chi(D_{j})\overline{\chi(D_{j})}=\lambda.$
Since $\lambda'\mid \lambda$, we have
$$\chi(D_{j})\overline{\chi(D_{j})}\equiv 0 \pmod{\lambda'}.$$
By Lemma \ref{X1} with gcd$(\lambda', d)=1$, we have $\prod\limits_{i=1}^{s}p_i^{\left\lfloor\frac{z_i+1}{2}\right\rfloor}\mid \chi(D_j).$
Since $\chi(D_j)\in \mathbb{Z}[\zeta_d]$, by Lemma \ref{BSC} we may write
\begin{equation}\label{semi-0}
\chi(D_j)=\left(\prod\limits_{i=1}^{s}p_i^{\left\lfloor\frac{z_i+1}{2}\right\rfloor}\right)\sum\limits_{x\in B_{d,1}}c_xx,
\end{equation}
where each $c_x$ is an integer and $B_{d,k}$ is as defined in Lemma \ref{BSC}. 
By Corollary \ref{c1}, we also have $\chi(D_j)=\sum\limits_{x\in B_{d,1}}d_xx$, where each $d_x$ is an integer satisfying $-2^{(t-1)}\frac{v}{d}\leq d_x \leq 2^{(t-1)}\frac{v}{d}$ and so $\prod\limits_{i=1}^{s}p_i^{\left\lfloor\frac{z_i+1}{2}\right\rfloor}c_x=d_x$ for each $x$. In view of Equation (\ref{Dj1}),  there exists a $d_x\not=0$. Then by Equation (\ref{semi-0}), we obtain
$$\prod\limits_{i=1}^{s}p_i^{\lfloor\frac{z_i+1}{2}\rfloor} \leq 2^{t-1}\frac{v}{d}.$$
Hence, $c=\prod\limits_{i=1}^{s}p_i^{\left\lfloor\frac{z_i+1}{2}\right\rfloor}$ is a divisor of $\lambda'$ satisfying
$$ \sqrt{\lambda'}\leq c \leq 2^{t-1}\frac{v}{d}.$$

{\bf Case (ii)} $\chi(D)\neq0$.

Since $m\geq 3$, by Equations (\ref{E1}) and (\ref{AX}), we have $\chi(D_{l})\overline{\chi(D_{l})}= \frac{b_{\chi}+a_{\chi}}{b_{\chi}-a_{\chi}}\lambda$ and $\chi(D_{l'})\overline{\chi(D_{l'})}$\\$= \frac{b_{\chi}-a_{\chi}}{b_{\chi}+a_{\chi}}\lambda$ for some integers $l$ and $l'$. By Lemma \ref{integer1} with $m\geq 3$, we obtain $\chi(D_{l})\overline{\chi(D_{l})}$ and $\chi(D_{l'})\overline{\chi(D_{l'})}$ are positive integers.

If $\gcd(a_{\chi}+b_{\chi},b_{\chi}-a_{\chi})=1$, we have $\gcd(\frac{b_{\chi}+a_{\chi}}{b_{\chi}-a_{\chi}}\lambda,\frac{b_{\chi}-a_{\chi}}{b_{\chi}+a_{\chi}}\lambda)=\frac{\lambda}{b_{\chi}^2-a_{\chi}^2}$. Then it holds that $\chi(D_{l})\overline{\chi(D_{l})}= (b_{\chi}+a_{\chi})^2\frac{\lambda}{b_{\chi}^2-a_{\chi}^2}$ and $ \chi(D_{l'})\overline{\chi(D_{l'})}= (b_{\chi}-a_{\chi})^2\frac{\lambda}{b_{\chi}^2-a_{\chi}^2}$. For each prime divisor $p_i$ of $\lambda'$, let $w_i$ be the maximum nonnegative integer such that $p_i^{w_i}\mid \frac{\lambda}{b_{\chi}^2-a_{\chi}^2}$. 
Since gcd$(\lambda',d)=1$, by Lemma \ref{X1} we know that $2\mid w_i$ for each $i,1\leq i\leq s.$ Let gcd$(b_{\chi}-a_{\chi},\lambda')=u_2$, gcd$(b_{\chi}+a_{\chi},\lambda')=u_3$ and set $u_1=\prod\limits_{i=1}^{s}p_i^{w_i}$. 
Since $2\mid w_i$, we know that $u_4=\sqrt{u_1}$ is an integer. By Lemma \ref{X1} we have
$$u_3 u_4\mid \chi(D_{l})\ {\rm and}\ u_2 u_4\mid \chi(D_{l'}).$$

Since $\lambda'\mid \lambda$, it holds that $\lambda'\mid u_1u_2u_3.$ Then we have $\lambda'\mid \gcd(\lambda', u_2 u_4)\gcd(\lambda', u_3 u_4).$ Without loss of generality, we assume $\gcd(\lambda', u_2 u_4)\leq \gcd(\lambda', u_3 u_4)$. Since $\chi(D_l)\in \mathbb{Z}[\zeta_d]$, by Lemma \ref{BSC} we may write
$$\chi(D_l)=\big{(}u_3u_4 \big{)}\sum\limits_{x\in B_{d,1}}c_xx,$$
where each $c_x$ is an integer. 
By Corollary \ref{c1}, we also have $\chi(D_l)=\sum\limits_{x\in B_{d,1}}d_xx$, where each $d_x$ is an integer satisfying $-2^{(t-1)}\frac{v}{d}\leq d_x \leq 2^{(t-1)}\frac{v}{d}$ and so $\big{(}u_3 u_4\big{)}c_x=d_x$ for each $x$. In view of Equation (\ref{Dj1}),  there exists a $d_x\not=0$. Then we obtain
$$u_3 u_4 \leq 2^{t-1}\frac{v}{d}.$$

Since gcd$(\lambda', u_2u_4)\leq$ gcd$(\lambda', u_3u_4)$ and $\lambda' \mid u_2u_3u_4^2$, we have gcd$(\lambda', u_3u_4)\geq \sqrt{\lambda'}$. Hence, $c=\gcd(\lambda', u_3 u_4)$ is a divisor of $\lambda'$ satisfying
$$ \sqrt{\lambda'}\leq c \leq 2^{t-1}\frac{v}{d}.$$

If gcd$(a_{\chi}+b_{\chi},b_{\chi}-a_{\chi})=2$, we have gcd$(\frac{b_{\chi}+a_{\chi}}{2},\frac{b_{\chi}-a_{\chi}}{2})=1$ and gcd$(\frac{b_{\chi}+a_{\chi}}{b_{\chi}-a_{\chi}}\lambda,\frac{b_{\chi}-a_{\chi}}{b_{\chi}+a_{\chi}}\lambda)=\frac{4\lambda}{b_{\chi}^2-a_{\chi}^2}$. Then we have $\chi(D_l)\overline{\chi(D_l)}= \big{(}\frac{b_{\chi}+a_{\chi}}{2}\big{)}^2\frac{4\lambda}{b_{\chi}^2-a_{\chi}^2}$ and $ \chi(D_{l'})\overline{\chi(D_{l'})}= \big{(}\frac{b_{\chi}-a_{\chi}}{2}\big{)}^2\frac{4\lambda}{b_{\chi}^2-a_{\chi}^2}$.  Similarly, there exists a divisor $c$ of $\lambda'$ such that
$$ \sqrt{\lambda'}\leq c \leq 2^{t-1}\frac{v}{d}.$$
This completes the proof. \qed
\end{proof}

By using Theorem \ref{SEDF1}, we obtain the following corollary which is also obtained by Jedwab and Li \cite{JL2018}.

\begin{corollary}\label{primitive11}{\rm \cite{JL2018}}
 Suppose there exists a $(v,m, k, \lambda)$-SEDF in $G$ with $m\geq 3$. Let $p$ and $q$ be primes such that
 $p^e|| v$ and $q^f || \lambda$ for some positive integers $e$ and $f$, and suppose that $q$ is a primitive root modulo $ p^e$. Let $G_p$ be the Sylow $p$-subgroup of $G$. Then
$$exp(G_p)\leq \frac{v}{q^ {\lceil f/2 \rceil}}.$$
\end{corollary}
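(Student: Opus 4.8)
The plan is to derive Corollary~\ref{primitive11} as a direct specialization of Theorem~\ref{SEDF1}. First I would set up the ingredients required by the theorem: take $d=\text{exp}(G_p)$ and $\lambda'=q^f$. To invoke the theorem I must verify its two hypotheses, namely that $\lambda'$ is self-conjugate modulo $d$ and that $\gcd(\lambda',d)=1$. Since $d$ is a power of $p$ and $\lambda'=q^f$ with $q\neq p$, the coprimality $\gcd(\lambda',d)=1$ is immediate. For self-conjugacy, I would use the hypothesis that $q$ is a primitive root modulo $p^e$ together with $p^e\|v$; the point is that a primitive root $q$ modulo $p^e$ has even order $\varphi(p^e)$ in $\mathbb{Z}_{p^e}^*$, so $q^{\varphi(p^e)/2}\equiv -1\pmod{p^e}$, and since $d=\text{exp}(G_p)$ divides $p^e$ (as $|G_p|$ is a power of $p$ dividing $v$, with $p^e$ the exact power of $p$ in $v$), the relation $q^r\equiv -1\pmod d$ holds for a suitable $r$. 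This shows $q$, hence $\lambda'=q^f$, is self-conjugate modulo $d$.

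Once the hypotheses are verified, Theorem~\ref{SEDF1} furnishes a divisor $c$ of $\lambda'=q^f$ with $\sqrt{\lambda'}\le c\le 2^{t-1}\frac{v}{d}$, where $t$ is the number of distinct prime divisors of $d$. The next step is to simplify this for our special choice: since $d$ is a prime power (a power of $p$), it has exactly one prime divisor, so $t=1$ and $2^{t-1}=1$. Thus the upper bound collapses to $c\le \frac{v}{d}=\frac{v}{\text{exp}(G_p)}$. On the lower side, $c\ge\sqrt{\lambda'}=\sqrt{q^f}=q^{f/2}$, and because $c$ is a divisor of $q^f$ it is itself a power of $q$, hence an integer, which forces $c\ge q^{\lceil f/2\rceil}$. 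Combining the two inequalities yields $q^{\lceil f/2\rceil}\le c\le \frac{v}{\text{exp}(G_p)}$, and rearranging gives $\text{exp}(G_p)\le \frac{v}{q^{\lceil f/2\rceil}}$, as desired.

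I would also double-check one subtlety to make the application clean: Theorem~\ref{SEDF1} requires $d>2$. If $\text{exp}(G_p)\le 2$ the stated bound is either trivial or concerns a degenerate Sylow subgroup, so I would handle $d=\text{exp}(G_p)=p$ or larger (which is automatic once $p$ is odd, and can be noted separately for $p=2$) to ensure $d>2$; in any nontrivial situation $\text{exp}(G_p)>2$ and the theorem applies directly.

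The main obstacle I anticipate is purely the self-conjugacy verification: one must be careful that the primitive-root hypothesis modulo $p^e$ transfers to the relevant modulus $d=\text{exp}(G_p)$, which may be a proper divisor $p^{e'}$ with $e'\le e$ of $p^e$. The key observation making this work is that if $q$ is a primitive root modulo $p^e$ then its residue modulo any $p^{e'}$ with $e'\le e$ is also a primitive root modulo $p^{e'}$ (for odd $p$), so the sign $-1$ is still hit by a power of $q$ modulo $d$; everything else in the argument is a mechanical substitution into Theorem~\ref{SEDF1}.
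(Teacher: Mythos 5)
Your proposal is correct and follows essentially the same route as the paper: both specialize Theorem~\ref{SEDF1} with $d=\text{exp}(G_p)$ and $\lambda'=q^f$, obtaining self-conjugacy from the primitive-root hypothesis via $q^u\equiv -1\pmod{p^e}$ and hence modulo $d\mid p^e$. If anything you are slightly more careful than the paper, since you explicitly bridge $\sqrt{q^f}\le c$ to $c\ge q^{\lceil f/2\rceil}$ using that a divisor of $q^f$ is a power of $q$, and you note the $d>2$ hypothesis of Theorem~\ref{SEDF1}, both of which the paper passes over silently.
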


\begin{proof}
 Set $d=\text{exp}(G_p)$, it holds that $d\mid p^e$. Since $q$ is a primitive root modulo $ {p^e}$, there exists a positive integer $u$ such that $q^u\equiv -1 \pmod  {p^e}$ and $q^u\equiv -1 \pmod  {d}$. Thus, $q^f$ is self-conjugate modulo $d$.  By applying Theorem \ref{SEDF1} with $d=exp(G_p)$ and $\lambda'=q^f$, we obtain
 $q^{\lfloor\frac{f+1}{2}\rfloor}\leq \frac{v}{d}.$ Therefore, $\text{exp}(G_p)\leq \frac{v}{q^ {\lceil f/2 \rceil}}.$ \qed
\end{proof}

By directly applying Theorem \ref{SEDF1}, we have obtained the following nonexistence results for SEDFs.

\begin{theorem}
 Let $p_1, p_2$ be two distinct primes. Let $\delta$ be either $0$ or $1$, and let $m, a, b, c$ be four positive integers such that  $m>2$.

{\rm (1)} Let $q_1, \ldots, q_t$ be $t$ distinct primes such that $q_i\mid \left((m-1)p_1^{2a+\delta}+1\right)$ for $i=1,\ldots, t$. If $p_1$ is self-conjugate modulo $q_1\cdots q_t$ and $q_1\cdots q_t>2^{t-1}\left((m-1)p_1^{2a-b}+p_1^{-b}\right)$, then an $((m-1)p_1^{2a+\delta}+1,\\ m,p_1^{a+b+\delta}, p_1^{2b+\delta})$-SEDF does not exist.

{\rm (2)} Let $q_1, \ldots, q_t$ be $t$ distinct primes such that $q_i\mid \left((m-1)p_1^{2a+\delta}p_2^{2c}+1\right)$ for $i=1,\ldots, t$. If $p_1$ is self-conjugate modulo $q_1\cdots q_t$ and $q_1\cdots q_t>2^{t-1}\left((m-1)p_1^{2a-b}p_2^{2c}+p_1^{-b}\right)$, then an $((m-1)p_1^{2a+\delta}p_2^{2c}+1, m, p_1^{a+b+\delta}p_2^c, p_1^{2b+\delta})$-SEDF does not exist.

{\rm (3)} Let $q_1, \ldots, q_t$ be $t$ distinct primes such that $q_i\mid \left((m-1)p_1^{2a}+1\right)$ for $i=1,\ldots, t$. If $p_2$ is self-conjugate modulo $q_1\cdots q_t$ and $q_1\cdots q_tp_2^b>2^{t-1}\left((m-1)p_1^{2a}+1\right)$, then an $((m-1)p_1^{2a}+1, m, p_1^{a}p_2^b,\\ p_2^{2b})$-SEDF does not exist.
\end{theorem}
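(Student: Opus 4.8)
The plan is to assume, in each of the three cases, that an SEDF with the stated parameters exists in an abelian group $G$ of order $v$, and to reach a contradiction by applying the exponent bound of Theorem \ref{SEDF1}. In every case I take $d=q_1\cdots q_t$ and let $\lambda'$ be the full value of $\lambda$: namely $\lambda'=p_1^{2b+\delta}$ in cases (1) and (2), and $\lambda'=p_2^{2b}$ in case (3). Before invoking the bound I would record that the parameters are internally consistent, i.e.\ that the basic relation $(m-1)k^2=\lambda(v-1)$ holds in each case (a one-line check of the powers of $p_1$ and $p_2$), so that the quadruple is a legitimate candidate SEDF.

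Next I would verify the three hypotheses needed to apply Theorem \ref{SEDF1} with this $d$ and $\lambda'$. First, $d\mid\exp(G)$: each $q_i$ divides $v=|G|$, so the abelian group $G$ has an element of order $q_i$; as the $q_i$ are $t$ distinct primes, their squarefree product $d$ divides $\exp(G)$ (provided $d>2$, which holds whenever $q_1\cdots q_t$ exceeds $2$), and incidentally the number of distinct prime divisors of $d$ is exactly $t$, matching the factor $2^{t-1}$ in the bound. Second, self-conjugacy: since $\lambda'$ is a prime power, the hypothesis that $p_1$ (resp.\ $p_2$) is self-conjugate modulo $q_1\cdots q_t$ is exactly the statement that $\lambda'$ is self-conjugate modulo $d$. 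Third, $\gcd(\lambda',d)=1$: in cases (1) and (2) one has $v\equiv 1\pmod{p_1}$, so $p_1\nmid v$ and hence $p_1\ne q_i$ for all $i$, giving $\gcd(p_1^{2b+\delta},d)=1$; in case (3) the analogous coprimality $p_2\nmid d$ must be checked, since it is needed for the self-conjugacy hypothesis to be usable together with Lemma \ref{X1}.

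With the hypotheses in place, Theorem \ref{SEDF1} supplies a divisor $c'$ of $\lambda'$ with $\sqrt{\lambda'}\le c'\le 2^{t-1}v/d$. The crucial observation is that $c'$ is a power of the single prime dividing $\lambda'$. In cases (1) and (2), $c'=p_1^{\,j}$ with $p_1^{\,j}\ge\sqrt{\lambda'}=p_1^{\,b+\delta/2}$; since $j$ is an integer and $\delta\in\{0,1\}$, this forces $j\ge b+\delta$, i.e.\ $c'\ge p_1^{\,b+\delta}$. Feeding this into the upper bound gives
$$q_1\cdots q_t=d\le 2^{t-1}\frac{v}{p_1^{\,b+\delta}}=2^{t-1}\bigl((m-1)p_1^{\,2a-b}+p_1^{\,-b-\delta}\bigr)\le 2^{t-1}\bigl((m-1)p_1^{\,2a-b}+p_1^{\,-b}\bigr),$$
the last step using $\delta\ge 0$ (and in case (2) the extra factor $p_2^{2c}$ simply rides along in the numerator of $v$). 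This contradicts the hypothesis $q_1\cdots q_t>2^{t-1}\bigl((m-1)p_1^{\,2a-b}+p_1^{\,-b}\bigr)$. In case (3) the argument is even shorter: $\sqrt{\lambda'}=p_2^{\,b}$ is already the relevant power, so $p_2^{\,b}\le c'\le 2^{t-1}v/d$ yields $q_1\cdots q_t\,p_2^{\,b}\le 2^{t-1}\bigl((m-1)p_1^{2a}+1\bigr)$, contradicting the hypothesis.

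I expect the main obstacle to be the $\delta=1$ subcase of parts (1) and (2): there $\sqrt{\lambda'}=p_1^{\,b+1/2}$ is not an integer, so the raw inequality $\sqrt{\lambda'}\le 2^{t-1}v/d$ is too weak to contradict the hypothesis, and one genuinely needs the integrality of $c'$ as a power of $p_1$ to round up to $c'\ge p_1^{\,b+1}$. Keeping careful track of this rounding (via $\lceil b+\delta/2\rceil=b+\delta$ for $\delta\in\{0,1\}$), together with confirming the coprimality condition $\gcd(\lambda',d)=1$ in case (3), are the only non-routine points; the rest is bookkeeping with powers of $p_1$ and $p_2$.
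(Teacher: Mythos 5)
Your route is exactly the paper's: the paper states this theorem without a separate argument, presenting it as a direct application of the exponent bound of Theorem \ref{SEDF1} with $d=q_1\cdots q_t$ and $\lambda'=\lambda$, and your treatment of parts (1) and (2) closes all the details correctly. In particular, the rounding step is the right (and genuinely needed) observation: since $\lambda'=p_1^{2b+\delta}$ is a prime power, the divisor $c'$ supplied by Theorem \ref{SEDF1} is $p_1^{\,j}$ with $j\geq \lceil b+\delta/2\rceil = b+\delta$, which for $\delta=1$ strengthens the raw bound $\sqrt{\lambda'}\leq 2^{t-1}v/d$ by the factor $\sqrt{p_1}$ that the stated hypothesis requires. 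In (1) and (2) the coprimality $\gcd(\lambda',d)=1$ is indeed automatic, because each $q_i$ divides $v\equiv 1\pmod{p_1}$. The side condition $d>2$ of Theorem \ref{SEDF1} is also harmless: if $d=2$ then $t=1$ and the hypothesis inequality forces $v<2p_1^{\,b}$ (and its analogues in (2), (3)), which contradicts $mk\leq v$, so no SEDF exists in that degenerate situation anyway.

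However, in part (3) there is a genuine unclosed step. You wrote that the coprimality $p_2\nmid d$ ``must be checked,'' but you never check it, and it does \emph{not} follow from the stated hypotheses: unlike $p_1$ in parts (1) and (2), nothing prevents $p_2$ from dividing $v=(m-1)p_1^{2a}+1$, and the paper's definition of self-conjugacy explicitly permits $p_2\mid w$. Concretely, take $m=3$, $p_1=2$, $a=3$, $p_2=3$, $b=1$: then $v=129=3\cdot 43$, $k=24$, $\lambda=9$ satisfy $(m-1)k^2=\lambda(v-1)$, and with $q_1=3$, $q_2=43$ all hypotheses of (3) hold, since $3^{21}\equiv -1\pmod{43}$ gives self-conjugacy of $p_2=3$ modulo $129$, and $q_1q_2p_2^{\,b}=387>258=2^{t-1}v$; yet $\gcd(\lambda',q_1q_2)=3$, so Theorem \ref{SEDF1} is inapplicable with $d=q_1q_2$. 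The natural fallback $d=43$ (dropping the offending prime) only yields a divisor $c$ of $9$ with $3\leq c\leq 129/43=3$, which is consistent and produces no contradiction. (In this particular instance nonexistence is rescued by Lemma \ref{m34}, but that lies outside your argument.) To make part (3) rigorous you must either add the hypothesis $p_2\notin\{q_1,\ldots,q_t\}$ --- which is surely the intended reading, and the same implicit assumption underlies the paper's unproved statement --- or supply a separate argument for the sub-case $p_2\mid q_1\cdots q_t$.
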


We illustrate the use of Theorem \ref{SEDF1} to rule out the existence of some other SEDFs in certain abelian groups.

\begin{example}
There does not exist a $(3381,23,130,110)$-SEDF in $\mathbb{Z}_{3381}$.
\end{example}
\begin{proof}
Suppose, for a contradiction, that there exists a $(3381,23,130,110)$-SEDF in $\mathbb{Z}_{3381}$. Since $5^{33}\equiv -1\pmod {3381}$, by applying Theorem \ref{SEDF1} with $\lambda'=5$ and $d=3381$, we have $5\leq 4$, a contradiction. \qed
\end{proof}

\begin{example}
There does not exist a $(4375, 7, 486, 324)$-SEDF in any abelian group $G$ with $5^4\mid \text{exp}(G)$.
\end{example}
\begin{proof}
Suppose, for a contradiction, that there exists a $(4375, 7, 486, 324)$-SEDF in $G$ with $5^4\mid \text{exp}(G)$. Since $3^{250}\equiv -1\pmod {5^4}$ and  $2^{250}\equiv -1\pmod {5^4}$, by  applying Theorem \ref{SEDF1} with $\lambda'=324$ and $d=5^4$, we have $18\leq 7$, a contradiction. \qed
\end{proof}


\section{The field descent method of SEDFs}
In this section, we study the structure of elements $X,Y\in \mathbb{Z}[\zeta_{p^a}]$ which satisfy $XY=N$, where $X=r_YY$ and $r_Y\in \mathbb{Q}$. Our results are based on the tools for the field descent method, a special homomorphism from $G$ to a cyclic group and Gauss sums. Firstly, we give the following useful lemma.

\begin{lemma}\label{FIF0}
Let $D$ be a subset of a finite abelian group $G$. Let $p>2$ be a prime divisor of $|G|$ and write $|G|=p^av$ with $\gcd(v,p)=1$. Suppose $p^a \nmid |D|$, then there exists a nonprincipal character $\chi$ of $G$ of order $p^s$ such that $\chi(D)\neq 0$ for some positive integer $s$ with $1\leq s\leq a$.
\end{lemma}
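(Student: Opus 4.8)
The plan is to exploit that the characters of $G$ having $p$-power order are exactly those that are principal on the $p$-complement of $G$, and then to run a Fourier-inversion/averaging argument showing that if all such characters annihilated $D$, then $p^a$ would have to divide $|D|$.

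Concretely, since $G$ is a finite abelian group with $|G|=p^av$ and $\gcd(v,p)=1$, I would write $G=G_p\times H$, where $G_p$ is the Sylow $p$-subgroup (of order $p^a$) and $H$ is the unique subgroup of order $v$. I would then work with the subgroup $H^{\perp}\le\widehat{G}$; by the identity $|H^{\perp}|=|G|/|H|$ recalled just before Lemma \ref{FIF}, we have $|H^{\perp}|=p^a$. Every $\chi\in H^{\perp}$ is trivial on $H$, hence factors through the quotient $\pi\colon G\to G/H\cong G_p$, say $\chi=\widetilde{\chi}\circ\pi$ with $\widetilde{\chi}\in\widehat{G/H}$, and the assignment $\chi\mapsto\widetilde{\chi}$ is an isomorphism $H^{\perp}\cong\widehat{G/H}$. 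By Lagrange, any nonprincipal $\chi\in H^{\perp}$ has order $p^s$ with $1\le s\le a$. Thus the lemma reduces to producing a nonprincipal $\chi\in H^{\perp}$ with $\chi(D)\neq 0$.

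To do this I would argue by contradiction: suppose $\chi(D)=0$ for every nonprincipal $\chi\in H^{\perp}$. Setting $\pi(D)=\sum_{\bar g\in G/H}c_{\bar g}\bar g\in\mathbb{Z}[G/H]$, the $c_{\bar g}$ are nonnegative integers with $\sum_{\bar g}c_{\bar g}=|D|$, and $\chi(D)=\widetilde{\chi}(\pi(D))$ for $\chi\in H^{\perp}$. Applying the Fourier inversion formula (Lemma \ref{FIF}) to $\pi(D)$ inside the group $G/H$ of order $p^a$ gives, for each $\bar g$,
$$c_{\bar g}=\frac{1}{p^a}\sum_{\widetilde{\chi}\in\widehat{G/H}}\widetilde{\chi}(\pi(D))\,\overline{\widetilde{\chi}(\bar g)}.$$
Under the assumption only the principal character contributes, so $c_{\bar g}=|D|/p^a$ for every $\bar g$. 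Since each $c_{\bar g}$ is an integer, this forces $p^a\mid|D|$, contradicting the hypothesis $p^a\nmid|D|$. Hence some nonprincipal $\chi\in H^{\perp}$ satisfies $\chi(D)\neq 0$, and such $\chi$ has order $p^s$ with $1\le s\le a$, as required.

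There is no genuine obstacle here; the argument is a routine averaging over the group of $p$-power-order characters. The only points demanding care are bookkeeping ones: identifying $H^{\perp}$ with $\widehat{G/H}$ so that Lemma \ref{FIF} applies on a quotient of order $p^a$, and combining membership in $H^{\perp}$ with Lagrange to pin the order of the resulting nonprincipal character to $p^s$ with $1\le s\le a$. Equivalently, one can bypass the quotient and use the subgroup orthogonality relation $\sum_{\chi\in H^{\perp}}\chi(g)=p^a$ if $g\in H$ and $0$ otherwise (since $(H^{\perp})^{\perp}=H$); summing $\chi(D)$ over $H^{\perp}$ yields $p^a\,|D\cap H|$, which under the contradiction hypothesis equals $\chi_0(D)=|D|$, again forcing $p^a\mid|D|$. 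Notably, the hypothesis $p>2$ is not needed for this lemma.
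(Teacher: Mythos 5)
Your proof is correct and is essentially the paper's own argument: the paper likewise decomposes $G\cong W\times H$ with $W$ the Sylow $p$-subgroup, projects $D$ onto $W$, identifies $\widehat{W}$ with the characters in $H^{\perp}$ (which have $p$-power order), and applies Fourier inversion (Lemma \ref{FIF}) on $W$ to conclude every coefficient of $\sigma(D)$ equals $|D|/p^a$, contradicting $p^a\nmid|D|$. Your closing remarks (the orthogonality-relation shortcut and the observation that $p>2$ is never used) are accurate but do not change the route.
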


\begin{proof}
Since $G$ is a finite abelian group and $|G|=p^av$ with gcd$(v,p)=1$, there exist two subgroups $W$ and $H$ such that $G\cong W\times H$, where gcd$(|H|,p)=1$ and $|W|=p^a$. For each character $\chi\in H^{\perp}$, $\chi$ induces the character $\chi|_W$ of $W$. Then we have $\widehat{W}\cong \{\chi|_W:~ \chi\in H^{\perp}\}$.

Set $\text{exp}(W)=p^t.$ Now suppose $\chi(D)=0$ for all nonprincipal character $\chi$ of $G$ of order $p^s$, where $s$ is a positive integer with $1\leq s\leq t$. Let $\sigma: G=W\times H \rightarrow W$ be the natural projection, then we have $\sigma(D)\in \mathbb{Z}[W]$. Set $\sigma(D)=\sum_{g\in W}a_gg\in \mathbb{Z}[W]$. For any character $\chi\in H^{\perp}$, since gcd$(|H|,p)=1$, we have that the order of $\chi$ is $p^s$. Since $\chi(D)=0$ for each nonprincipal character $\chi\in H^{\perp}$, we have $\chi|_W\big{(}\sigma(D)\big{)}=\chi(D)=0$ for each nonprincipal character $\chi|_W \in \widehat{W}$. Since $|\sigma(D)|=|D|$, by Lemma \ref{FIF} we have
$$a_g=\frac{1}{|W|}\sum\limits_{\chi\in \widehat{W}}\chi(\sigma(D)g^{-1})=\frac{1}{p^a}|D|, \ \ {\rm for\ all\ }g\in W, $$
contradicting the assumption that $p^a \nmid |D|$. This completes the proof. \qed
\end{proof}

We derive further divisibility conditions on the parameters of SEDFs in Theorem \ref{k(m-2)1}. We first require the following useful lemmas.

\begin{lemma}{\rm \cite{LS2019}}\label{KN}
Let $H$ be a cyclic group of order $p^a$ with a generator $g$ and let $P$ be its subgroup of order $p$. Then the map $\mathbb{Z}[H]\rightarrow \mathbb{Z}[\zeta_{p^a}]$, $\sum_{0\leq i<p^a} a_ig^i\rightarrow \sum_{0\leq i<p^a} a_i \zeta_{p^a}^i$ is a ring homomorphism with kernal $\{PY:~Y\in \mathbb{Z}[H]\}$.
\end{lemma}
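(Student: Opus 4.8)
The plan is to realize $\phi$ as a quotient map of polynomial residue rings and then read off its kernel via the third isomorphism theorem. First I would record the standard identification $\mathbb{Z}[H]\cong\mathbb{Z}[x]/(x^{p^a}-1)$ sending the generator $g$ to $x$; under it the prescribed map becomes $x\mapsto\zeta_{p^a}$, extended $\mathbb{Z}$-linearly. Since $\zeta_{p^a}^{p^a}=1$, the evaluation homomorphism $\psi\colon\mathbb{Z}[x]\to\mathbb{Z}[\zeta_{p^a}]$, $x\mapsto\zeta_{p^a}$, kills $x^{p^a}-1$, so it factors through $\mathbb{Z}[x]/(x^{p^a}-1)$; this factored map is exactly $\phi$, which is therefore a well-defined ring homomorphism. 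It is surjective because $\zeta_{p^a}$ generates $\mathbb{Z}[\zeta_{p^a}]$ as a ring.

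Next I would compute the kernel. Because the $p^a$-th cyclotomic polynomial $\Phi_{p^a}$ is the monic minimal polynomial of $\zeta_{p^a}$ and is irreducible over $\mathbb{Q}$, one has $\ker\psi=(\Phi_{p^a}(x))$ and $\mathbb{Z}[x]/(\Phi_{p^a}(x))\cong\mathbb{Z}[\zeta_{p^a}]$. Since $\Phi_{p^a}(x)\mid x^{p^a}-1$, we have the inclusion $(x^{p^a}-1)\subseteq(\Phi_{p^a}(x))$, and $\phi$ is precisely the induced surjection $\mathbb{Z}[x]/(x^{p^a}-1)\twoheadrightarrow\mathbb{Z}[x]/(\Phi_{p^a}(x))$. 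By the third isomorphism theorem its kernel is $(\Phi_{p^a}(x))/(x^{p^a}-1)$, that is, the principal ideal of $\mathbb{Z}[H]$ generated by the image of $\Phi_{p^a}(x)$.

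The decisive identification is then $\Phi_{p^a}(x)=\dfrac{x^{p^a}-1}{x^{p^{a-1}}-1}=\sum_{j=0}^{p-1}x^{jp^{a-1}}$, whose image under $x\mapsto g$ is $\sum_{j=0}^{p-1}g^{jp^{a-1}}=\sum_{h\in P}h=P$, since $\langle g^{p^{a-1}}\rangle$ is exactly the order-$p$ subgroup $P$. Hence $\ker\phi$ is the principal ideal generated by $P$, which in the commutative ring $\mathbb{Z}[H]$ equals $\{PY\colon Y\in\mathbb{Z}[H]\}$, as claimed. As a consistency check one verifies directly that $\phi(P)=\sum_{j=0}^{p-1}\zeta_{p^a}^{jp^{a-1}}=\sum_{j=0}^{p-1}\zeta_{p}^{\,j}=0$, confirming $P\in\ker\phi$.

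I expect the main obstacle to be the reverse inclusion $\ker\phi\subseteq\{PY\colon Y\in\mathbb{Z}[H]\}$, rather than the containment $\{PY\}\subseteq\ker\phi$, which is immediate from $\phi(P)=0$. Equality is not merely a matter of matching $\mathbb{Z}$-ranks (both sides are free of rank $p^{a-1}$, which would only give finite index); it needs the structural fact that passing from $\mathbb{Z}[x]/(x^{p^a}-1)$ to $\mathbb{Z}[x]/(\Phi_{p^a}(x))$ kills \emph{exactly} the ideal generated by $\Phi_{p^a}$, together with the clean identification of $\Phi_{p^a}$ with the group-ring element $P$. The polynomial-quotient description is what supplies this, so the whole argument rests on correctly matching $\Phi_{p^a}(x)$ to $\sum_{h\in P}h$.
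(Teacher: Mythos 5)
Your proposal is correct: the chain $\mathbb{Z}[H]\cong\mathbb{Z}[x]/(x^{p^a}-1)$, the fact that the evaluation $x\mapsto\zeta_{p^a}$ has kernel $(\Phi_{p^a}(x))$ already over $\mathbb{Z}$ (which holds because $\Phi_{p^a}$ is monic, so division with remainder works integrally), the third isomorphism theorem, and the identity $\Phi_{p^a}(x)=\sum_{j=0}^{p-1}x^{jp^{a-1}}$ whose image under $x\mapsto g$ is $\sum_{h\in P}h$, together yield exactly the claimed kernel $\{PY:\ Y\in\mathbb{Z}[H]\}$. The paper states this lemma with only a citation to \cite{LS2019} and supplies no proof of its own; your argument is the standard one underlying that reference, so it matches the intended proof in approach and substance.
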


\begin{lemma}\label{FIF1}
Let $p$ be a prime and let $G$ be a finite abelian group with $\text{exp}(G)=p^a$ for some positive integer $a$. Let $g$ be an element of $G$ with ord$(g)=p^a$ and let $\psi$ be a character of the subgroup $\langle g\rangle$ such that $\psi(g)=\zeta_{p^a}$. Then for each character $\varphi$ of $G$, there exists a homomorphism $\theta$ from $G$ to $\langle g\rangle$ such that $\psi\big{(}\theta(D)\big{)}= \varphi(D)$ for any $D\in \mathbb{Z}[G]$.
\end{lemma}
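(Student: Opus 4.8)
The plan is to construct $\theta$ as the composition $\psi^{-1}\circ\varphi$; the whole point is that the exponent hypothesis guarantees $\varphi$ takes all of its values inside the image of $\psi$, so this composition is well defined and lands in $\langle g\rangle$.

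First I would examine $\psi$ itself. Since $g$ has order $p^a$ and $\psi(g)=\zeta_{p^a}$ is a primitive $p^a$-th root of unity, the character $\psi$ sends $g^i$ to $\zeta_{p^a}^i$, and its kernel is trivial because $\psi(g^i)=1$ forces $p^a\mid i$, i.e.\ $g^i=e$. Hence $\psi$ is an injective homomorphism whose image is exactly the group $\mu_{p^a}=\{\zeta_{p^a}^i:0\le i<p^a\}$ of all $p^a$-th roots of unity. In other words $\psi\colon\langle g\rangle\to\mu_{p^a}$ is an isomorphism, so $\psi^{-1}\colon\mu_{p^a}\to\langle g\rangle$ is a well-defined homomorphism.

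Next I would observe that the exponent condition confines $\varphi$ to $\mu_{p^a}$. Indeed, $\text{exp}(G)=p^a$ makes $G$ an abelian $p$-group in which every $x\in G$ satisfies $x^{p^a}=e$; applying $\varphi$ gives $\varphi(x)^{p^a}=\varphi(x^{p^a})=1$, so $\varphi(x)\in\mu_{p^a}$ for every $x$. Therefore $\varphi(G)\subseteq\mu_{p^a}=\psi(\langle g\rangle)$, and I may set $\theta:=\psi^{-1}\circ\varphi$. As a composite of the homomorphisms $\varphi$ and $\psi^{-1}$, the map $\theta$ is a homomorphism from $G$ into $\langle g\rangle$, and by construction $\psi(\theta(x))=\psi(\psi^{-1}(\varphi(x)))=\varphi(x)$ for all $x\in G$.

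Finally I would extend this identity from $G$ to the group ring by $\mathbb{Z}$-linearity: writing $D=\sum_{x\in G}a_xx$ and using the natural extensions of $\theta$ and $\psi$, I obtain $\psi(\theta(D))=\sum_{x\in G}a_x\psi(\theta(x))=\sum_{x\in G}a_x\varphi(x)=\varphi(D)$, which is the assertion. There is no deep obstacle here; the only step requiring care is verifying that $\varphi(G)$ genuinely lies in the image of $\psi$---that is, that the exponent of $G$ matches the order of $g$---since this is precisely what allows $\psi^{-1}\circ\varphi$ to be formed. Everything else is routine bookkeeping.
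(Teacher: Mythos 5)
Your proof is correct, and it takes a genuinely different route from the paper's. The paper invokes the structure theorem to write $G=\langle g_1\rangle\times\cdots\times\langle g_s\rangle$ with $\mathrm{ord}(g_i)=p^{a_i}$ and $a_1=a$, reads off $\varphi(g_i)=\zeta_{p^{a_i}}^{b_i}$, and then defines $\theta$ by the explicit formula $\theta(g_1^{i_1}\cdots g_s^{i_s})=g^{\sum_j p^{a-a_j}b_j i_j}$, verifying $\psi\circ\theta=\varphi$ on generators. You instead observe that $\psi$ is an isomorphism of $\langle g\rangle$ onto the group $\mu_{p^a}$ of $p^a$-th roots of unity (injectivity from $\mathrm{ord}(g)=p^a$ and $\psi(g)=\zeta_{p^a}$ primitive), that $\exp(G)=p^a$ forces $\varphi(G)\subseteq\mu_{p^a}$, and then set $\theta=\psi^{-1}\circ\varphi$; the extension to $\mathbb{Z}[G]$ is linearity, exactly as in the paper. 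Your argument is shorter, avoids the structure theorem for finite abelian groups entirely, and has the added benefit that injectivity of $\psi$ shows $\theta$ is the \emph{unique} homomorphism with $\psi\circ\theta=\varphi$ (so in particular your $\theta$ coincides with the paper's); it also generalizes verbatim to any finite abelian group and any $g$ with $\mathrm{ord}(g)=\exp(G)$ carrying a faithful character of $\langle g\rangle$. What the paper's version buys in exchange is a concrete coordinate formula for $\theta$, which can be convenient in computations, though it is not needed anywhere in the subsequent application (Theorem \ref{k(m-2)1} uses only the existence of $\theta$ with $\psi\circ\theta=\varphi$).
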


\begin{proof}
Since $G$ is a finite abelian group, there exist $s$ elements $g_1,\ldots, g_s$ such that $G= \langle g_1\rangle\times \ldots \times \langle g_s\rangle$ where ord$(g_i)=p^{a_i}$ for $1\leq i\leq s$ and $1\leq a_i\leq a$. Since $\text{exp}(G)=p^a$, without loss of generality, we assume that $a_1=a$. 
Let $\varphi$ be a character of $G$ and set $$\varphi(g_i)=\zeta_{p^{a_i}}^{b_i}\ {\rm for\ some}\ b_i.$$
Then we have
$$\varphi(g_1^{i_1}\ldots g_s^{i_s})=\varphi(g_1)^{i_1}\ldots \varphi(g_s)^{i_s}=\zeta_{p^{a_1}}^{b_1i_1}\ldots \zeta_{p^{a_s}}^{b_si_s}\ {\rm for\ each\ }\ g_1^{i_1}\ldots g_s^{i_s} \in G.$$
Define a map $\theta$ from $G$ to $\langle g\rangle$ given by
$$ \theta:~g_1^{i_1}\ldots g_s^{i_s}\rightarrow g^{\sum\limits_{j=1}^sp^{(a-a_j)}b_ji_j}\ {\rm for\ each\ }\ g_1^{i_1}\ldots g_s^{i_s} \in G. $$
Clearly, $\theta:~ G\rightarrow \langle g\rangle$ is a homomorphism. Since $\psi$ is a character of $\langle g\rangle$ such that $\psi(g^i)=\zeta_{p^a}^i$ for each $i$, $0\leq i\leq p^a-1$, it holds that
$$\psi\big{(}\theta(g_1^{i_1}\ldots g_s^{i_s})\big{)}=\psi\left(g^{\sum\limits_{j=1}^sp^{(a-a_j)}b_ji_j}\right)=\zeta_{p^a}^{\sum\limits_{j=1}^sp^{(a-a_j)}b_ji_j}=
\varphi\left(g_1^{i_1}\ldots g_s^{i_s}\right)\ {\rm for\ each\ }\ g_1^{i_1}\ldots g_s^{i_s} \in G. $$
Hence, it holds that $\psi\big{(}\theta(D)\big{)}= \varphi(D)$ for any $D\in \mathbb{Z}[G]$. This completes the proof. \qed
\end{proof}

\begin{theorem}\label{k(m-2)1}
Suppose there exists a $(v,m, k, \lambda)$-SEDF with $m\geq 3$ in an abelian group $G$. Let $p$ be an odd prime
with $p^a || v$ and $q$ a prime divisor of $\lambda$. Then
at least one of the following conditions satisfies:
\begin{itemize}
\item [{\rm (1)}] $k(m-2)\equiv 0\pmod {p}$; 
\item [{\rm (2)}] $p^{a}\mid mk$, and $u\equiv 0\pmod 2$ if $q$ is self-conjugate modulo $p$ and $q^u||\lambda$. 
\end{itemize}

\end{theorem}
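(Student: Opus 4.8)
The plan is to prove the disjunction by assuming that (1) fails and deducing (2). So suppose $p\nmid k(m-2)$, which gives $p\nmid k$ and $p\nmid(m-2)$; write $D=\bigcup_{i=1}^m D_i$, so $|D|=mk$. The engine of the argument is the elementary observation that for any character $\chi$ of $G$ of $p$-power order $p^s$ and any subset $S\subseteq G$ one has $\chi(S)\equiv |S|\pmod{\wp}$, where $\wp=(1-\zeta_{p^s})$ is the unique prime ideal of $\mathbb{Z}[\zeta_{p^s}]$ lying above $p$ (totally ramified, residue field $\mathbb{F}_p$, by Theorem \ref{g-2} with modulus $p^s$); indeed each $\chi(g)$ is a $p^s$-th root of unity and hence $\equiv 1\pmod{\wp}$.

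The first main step is a ``Case B'' computation: I claim that if some nonprincipal character $\chi$ of $p$-power order $p^s$ satisfies $\chi(D)\neq0$, then (1) must hold. Since $m\geq3$ and $\chi(D)\neq0$, Lemma \ref{integer1} and (\ref{AX}) put $\chi$ into $\widehat{G}^+\cup\widehat{G}^-$ with $\alpha+\beta=1$ from (\ref{E1}), and the counts $x,y$ in (\ref{E2}) are both strictly positive because $\sqrt{1+4\lambda/|\chi(D)|^2}>1$ forces $x>\frac m2-\frac{m-2}2=1$ and $y>\frac m2$. Hence by (\ref{Dj2}) there exist indices $l,l'$ with $\chi(D_l)=\alpha\chi(D)$ and $\chi(D_{l'})=\beta\chi(D)$, so that $\chi(D_l)+\chi(D_{l'})=(\alpha+\beta)\chi(D)=\chi(D)$. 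Reducing modulo $\wp$ and using $\chi(D_l)\equiv\chi(D_{l'})\equiv k$ gives $\chi(D)\equiv 2k$, while summing all $m$ terms gives $\chi(D)\equiv mk$; comparing these two congruences of rational integers modulo $\wp$ yields $(m-2)k\equiv0\pmod p$, which is (1). This contradicts our standing assumption, so in fact $\chi(D)=0$ for every nonprincipal character of $p$-power order.

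The second step turns this into (2). Since every nonprincipal $p$-power-order character kills $D$, the contrapositive of Lemma \ref{FIF0} (valid as $p>2$ and $p\mid|G|$) gives $p^a\mid|D|=mk$, the first half of (2). Because $p\nmid k$ this forces $p^a\mid m$, hence $p\mid m$ and $p\nmid(m-1)$; combined with $p\nmid k$ and $p\nmid(v-1)$ (as $p^a\|v$) and the relation $(m-1)k^2=\lambda(v-1)$ from (\ref{mk2=lambda(n-1)}), we get $p\nmid\lambda$, so in particular $q\neq p$. For the parity statement, take a character $\chi$ of order exactly $p$; it is nonprincipal of $p$-power order, so $\chi(D)=0$, and then (\ref{Dj1}) gives $\chi(D_j)\overline{\chi(D_j)}=\lambda$ with $\chi(D_j),\overline{\chi(D_j)}\in\mathbb{Z}[\zeta_p]$. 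If $q$ is self-conjugate modulo $p$ and $q^u\|\lambda$, then Remark \ref{upi}, applied with the ring $\mathbb{Z}[\zeta_p]$ (legitimate since $q\neq p$, so $q\nmid p$), shows $u$ is even. This completes (2).

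The delicate point to get right is the logical packaging of the dichotomy together with the modulus bookkeeping in the last step: the parity conclusion is stated with ``self-conjugate modulo $p$,'' so one must work in $\mathbb{Z}[\zeta_p]$ rather than in $\mathbb{Z}[\zeta_{p^s}]$, and this is exactly why I select a character of order precisely $p$ (where the self-conjugacy hypothesis matches the ring) instead of the order-$p^s$ character produced generically. The other subtlety is verifying $q\neq p$ so that Remark \ref{upi} can legitimately be invoked; I expect the small computation establishing $p\nmid\lambda$ (hence $q\neq p$) from $p^a\mid m$ to be the only place where care is genuinely required, the remaining congruence manipulations modulo $\wp$ being routine.
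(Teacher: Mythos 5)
Your proposal is correct, and it reaches the same character-theoretic dichotomy as the paper (some nonprincipal character of $p$-power order is nonvanishing on $D$, versus all such characters kill $D$), but the engine of the first case is genuinely different. Where you reduce everything to the single congruence $\chi(S)\equiv|S|\pmod{(1-\zeta_{p^s})}$ in $\mathbb{Z}[\zeta_{p^s}]$ --- so that $\chi(D)\equiv mk$ and $\chi(D_l)+\chi(D_{l'})=(\alpha+\beta)\chi(D)=\chi(D)\equiv 2k$ give $(m-2)k\equiv 0\pmod p$ immediately via the totally ramified prime above $p$ --- the paper instead forms $E=\bigcup_{i=2}^{m-1}D_i$ with $\chi(E)=0$, projects $G=W\times H$ onto the Sylow part $W$, transports the character through the homomorphism $\theta\colon W\to\langle g\rangle$ of Lemma \ref{FIF1}, and then invokes Lemma \ref{KN} (the kernel of $\mathbb{Z}[\langle g\rangle]\to\mathbb{Z}[\zeta_{p^b}]$ is $\{PY\}$) to conclude $|E|=(m-2)k\equiv 0\pmod p$ from the coefficient sum of $PY$. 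The two mechanisms prove the same divisibility; yours is shorter and avoids the group-theoretic reductions entirely, while the paper's route builds the $\theta\sigma$ machinery that it reuses elsewhere (e.g., in the examples following Theorem \ref{pfK}). In the second case the arguments coincide ($p^a\mid mk$ from Lemma \ref{FIF0}, then a character of order exactly $p$ and Lemma \ref{X1}/Remark \ref{upi} for the parity of $u$), except that your contrapositive packaging buys you something the paper leaves implicit: from $p\nmid k$ and $p^a\mid m$ you deduce $p\nmid\lambda$ via $(m-1)k^2=\lambda(v-1)$, hence $q\neq p$, which is needed before applying the ``furthermore'' clause of Lemma \ref{X1} (that clause only covers primes not dividing the modulus $w=p$, and $q=p$ is vacuously self-conjugate modulo $p$); the paper applies Lemma \ref{X1} in its Case (ii) without this check, so your version is in fact the more carefully justified one.
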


\begin{proof}
Let $\{D_1,\ldots, D_m\}$ be a $(v,m, k, \lambda)$-SEDF in $G$ and set $D=\bigcup\limits_{i=1}^mD_i$. Let $\chi$ be a nonprincipal character of $G$. The nonprincipal character $\chi$ falls into one of the following two cases:

{\bf Case (i)} $\chi(D)\neq 0$ for some nonprincipal character $\chi$ of $G$ of order $p^s$ with $1\leq s\leq a$. 

By Equation (\ref{AX}), without loss of generality, we assume that
$$\{\chi(D_j):~ 1\leq j\leq x\}=\left\{ \frac{a_{\chi}+b_{\chi}}{2a_{\chi}}\chi(D)\right\}\ \  {\rm and}\ \  \{\chi(D_j):~ x< j\leq m\}=\left\{  \frac{a_{\chi}-b_{\chi}}{2a_{\chi}}\chi(D)\right\},$$
where $(a_{\chi},b_{\chi})$ is as defined in Equation {\rm (\ref{AX})} and $x$ is the number of times $\chi(D_j)$ takes the value $\frac{a_{\chi}+b_{\chi}}{2a_{\chi}}\chi(D)$.
Set $E=\bigcup\limits_{i=2}^{m-1}D_i$, then it holds that
$$\chi\big{(}E\big{)}=\chi\big{(}D\big{)}-\frac{a_{\chi}+b_{\chi}}{2a_{\chi}}\chi(D)-\frac{a_{\chi}-b_{\chi}}{2a_{\chi}}\chi(D)=0.$$

Since $G$ is a finite abelian group and $|G|=p^av'$ with gcd$(v',p)=1$, there exist two subgroups $W$ and $H$ such that $G\cong W\times H$, where gcd$(|H|,p)=1$ and $|W|=p^a$. Since $\chi$ is a nonprincipal character of $G$ of order $p^s$, then we have $\chi\in H^{\perp}$. Let $\sigma: G=W\times H \rightarrow W$ be the natural projection. Then we have
 $$\chi|_W\big{(}\sigma(D)\big{)}=\chi(D)\neq 0\ {\rm and}\ \chi|_W\big{(}\sigma(E)\big{)}=\chi(E)=0,$$
where $\chi$ induces the character $\chi|_W$ of $W$. Since $W$ is an abelian group of order $p^a$, there exist an element $g\in W$ of order $p^b$ and a subgroup $W_1$ such that $W=\langle g\rangle\times W_1$, where $\text{exp}(W)=p^b$. For the character $\chi|_W$ of $W$, by Lemma \ref{FIF1} there exists a homomorphism $\theta$ from $W$ to $\langle g\rangle$ such that
$$\psi\big{(}\theta(\sigma(D))\big{)}=\chi|_W(\sigma(D))\neq 0\ {\rm and}\ \psi\big{(}\theta(\sigma(E))\big{)}=\chi|_W(\sigma(E))=0,$$
where $\psi(g^i)=\zeta_{p^b}^i$ for each $i$, $0\leq i\leq p^b-1$.

Since $\sigma: G=W\times H \rightarrow W$ is the natural projection and $\theta: W\rightarrow \langle g\rangle$ is a homomorphism, we have
$\theta\sigma: G\rightarrow \langle g\rangle$ is a homomorphism. Then we have $\theta\sigma\big{(}E\big{)}\in \mathbb{Z}[\langle g\rangle].$ Set
$\theta\sigma\big{(}E\big{)}=\sum\limits_{i=0}^{p^b-1}a_ig^i, $
then we have $$\sum\limits_{i=0}^{p^b-1}a_i=|E|.$$
Let $P=\langle g^{p^{b-1}}\rangle$. Then $P$ is the subgroup of $\langle g \rangle$ of order $p$. Since $\psi\big{(}\theta\sigma(E)\big{)}=0$, by Lemma \ref{KN} we have $\theta\sigma(E)=PY$ for some $Y\in \mathbb{Z}[\langle g\rangle]$. It follows that
$$|E|=\sum\limits_{i=0}^{p^b-1}a_i \equiv 0 \pmod p.$$
Thus, we conclude $(m-2)k \equiv 0 \pmod p.$

{\bf Case (ii)} $\chi(D)= 0$ for all nonprincipal character $\chi$ of $G$ of order $p^s$ with $1\leq s\leq a$.

By Lemma \ref{FIF0}, we have $p^a \mid mk$. Let $\phi$ be a nonprincipal character of $G$ of order $p$. By Equation (\ref{Dj1}), we have
$$\phi(D_i)\overline{\phi(D_i)}=\lambda$$
for each $i, 1\leq i\leq m.$ Since $q$ is a prime divisor of $\lambda$ such that $q$ is self-conjugate modulo $p$ and $q^u||\lambda$, applying Lemma \ref{X1}, we have $u$ is an even integer. This completes the proof. \qed
\end{proof}

\begin{remark}\label{0p}
If there exists a nonprincipal character $\chi$ of $G$ of order $p^s$ such that $\chi(D)\neq 0$, from the proof of Theorem {\rm\ref{k(m-2)1}} we have $p\mid (m-2)k$.
\end{remark}

Applying Theorem \ref{k(m-2)1}, we obtain the following nonexistence results for SEDFs.

\begin{theorem}\label{bcm0}
 Let $m, b, c$ be three positive integers such that  $m>2$. 

{\rm (1)} Let $p>2, q$ be two distinct primes such that $p\mid \left (q^{2b}+1\right)$ and $q^{2c+1}||(m-1)$. If $p\nmid (m-2)k$, then a $\left(q^{2b}+1, m, k, \lambda\right)$-SEDF does not exist.

{\rm (2)} Let $q$ be a prime such that $q^{2c}||(m-1)$ or $q \nmid (m-1)$. Let $p$ be an odd prime such that $p\mid \left(q^{2b+1}+1\right)$. If $p\nmid (m-2)k$, then a $\left(q^{2b+1}+1, m, k, \lambda \right)$-SEDF does not exist.
\end{theorem}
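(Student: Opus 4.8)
The plan is to argue by contradiction for both parts at once, reducing each to a single application of Theorem \ref{k(m-2)1}. Suppose such an SEDF exists. In part (1) we have $v=q^{2b}+1$, so $v-1=q^{2b}$, and in part (2) we have $v=q^{2b+1}+1$, so $v-1=q^{2b+1}$; in either case $\gcd(q,v)=1$, and $q\neq p$ since $p\mid v$ would otherwise force $q\mid 1$. Because $p$ is an odd prime with $p\mid v$, write $p^a\,\|\,v$ with $a\geq 1$. The relation $(m-1)k^2=\lambda(v-1)$ from (\ref{mk2=lambda(n-1)}) will let me pin down the exact power of $q$ dividing $\lambda$, and the divisibility hypothesis on $p$ will supply the self-conjugacy needed to invoke the strong branch of Theorem \ref{k(m-2)1}.

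The heart of the argument is a parity computation on the $q$-adic valuation $u:=v_q(\lambda)$. Taking $q$-adic valuations in $(m-1)k^2=\lambda(v-1)$ gives $u=v_q(m-1)+2v_q(k)-v_q(v-1)$. In part (1), $v_q(m-1)=2c+1$ is odd while $v_q(v-1)=2b$ is even; in part (2), $v_q(m-1)\in\{0,2c\}$ is even while $v_q(v-1)=2b+1$ is odd. Since $2v_q(k)$ is even, in both cases exactly one of the two governing exponents is odd, forcing $u$ to be odd. As $u\geq 0$ is odd, we get $u\geq 1$, so $q\mid\lambda$ with $q^u\,\|\,\lambda$ and $u$ odd. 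Separately, the hypothesis that $p$ divides $q^{2b}+1$ (resp. $q^{2b+1}+1$) says $q^{2b}\equiv -1$ (resp. $q^{2b+1}\equiv -1$) modulo $p$; since $p$ is prime and $q\neq p$, self-conjugacy of $q$ modulo $p$ reduces to the existence of an integer $r$ with $q^r\equiv -1\pmod p$, which we now have, so $q$ is self-conjugate modulo $p$.

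With these ingredients I apply Theorem \ref{k(m-2)1} to the odd prime $p\mid v$ (with $p^a\,\|\,v$) and the prime divisor $q$ of $\lambda$. Its conclusion forces at least one of two alternatives. The first, $k(m-2)\equiv 0\pmod p$, contradicts the standing hypothesis $p\nmid(m-2)k$. The second requires $p^a\mid mk$ together with $u$ even, the latter because $q$ is self-conjugate modulo $p$ and $q^u\,\|\,\lambda$; but we have shown $u$ is odd, so this alternative is impossible as well. Since neither alternative can hold, the assumed SEDF cannot exist, proving both (1) and (2).

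The only genuinely delicate point I anticipate is the parity bookkeeping: one must check that the prescribed exponents make $v_q(v-1)$ and $v_q(m-1)$ of opposite parity in each part, since this single fact does double duty — it guarantees both that $q$ actually divides $\lambda$ (so that $q$ is a legitimate prime divisor of $\lambda$ in Theorem \ref{k(m-2)1}) and that the self-conjugate branch of that theorem is blocked by the odd value of $u$. Everything else, namely the self-conjugacy verification and the final case split, is immediate once this parity observation is in place.
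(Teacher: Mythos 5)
Your proposal is correct and follows essentially the same route as the paper: both arguments extract the odd $q$-adic valuation of $\lambda$ from $(m-1)k^2=\lambda(v-1)$ via the prescribed parities of $v_q(m-1)$ and $v_q(v-1)$, observe that $p\mid q^{2b}+1$ (resp. $q^{2b+1}+1$) makes $q$ self-conjugate modulo $p$, and then invoke Theorem \ref{k(m-2)1}, whose second alternative is blocked by the odd valuation, so the first alternative $p\mid(m-2)k$ contradicts the hypothesis. Your write-up merely makes explicit the case elimination that the paper leaves implicit when it states ``applying Theorem \ref{k(m-2)1}, we deduce that $p\mid (m-2)k$.''
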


\begin{proof}
(1) Suppose, for a contradiction, that $\{D_1,\ldots, D_m\}$ is a $\left(q^{2b}+1, m, k, \lambda\right)$-SEDF in a group $G$ of order $q^{2b}+1$. By Equation (\ref{mk2=lambda(n-1)}), we have $\lambda q^{2b}=(m-1)k^2$. Since $q$ is a prime and $q^{2c+1}||(m-1)$, we have $q^{2d+1}||\lambda$ for some nonnegative integer $d.$ Since $p$ is a prime such that $p\mid \left(q^{2b}+1\right)$, we know that $q$ is self-conjugate modulo $p$. Since $q^{2d+1}||\lambda$, applying Theorem \ref{k(m-2)1}, we deduce that $p\mid (m-2)k$, which leads to a contradiction. 

(2) Suppose, for a contradiction, that $\{D_1,\ldots, D_m\}$ is a $\left(q^{2b+1}+1, m, k, \lambda\right)$-SEDF in a group $G$ of order $q^{2b+1}+1$. By Equation (\ref{mk2=lambda(n-1)}), we have $\lambda q^{2b+1}=(m-1)k^2$. Since $q$ is a prime such that $q^{2c}||(m-1)$ or $q\nmid (m-1)$, we have $q^{2d+1}||\lambda$ for some nonnegative integer $d.$ Since $p$ is a prime such that $p\mid \left(q^{2b+1}+1\right)$, we know that $q$ is self-conjugate modulo $p$. Since $q^{2d+1}||\lambda$, applying Theorem \ref{k(m-2)1}, we deduce that $p\mid (m-2)k$, which leads to a contradiction. This completes the proof. \qed
\end{proof}

Similar to the proof of Theorem \ref{bcm0}, applying Theorem \ref{k(m-2)1}, we obtain the following nonexistence results for SEDFs. We omit the proof here. 

\begin{theorem}
 Let $l, m, b, c$ be four positive integers such that $l>2$ and $m>2$.

{\rm (1)} Let $p>2, q$ be two distinct primes such that $p\mid \left(q^{2b}l^2+1\right)$ and $q^{2c+1}||(m-1)$. If $q$ is self-conjugate modulo $p$ and $p\nmid (m-2)k$, then a $\left(q^{2b}l^2+1, m, k, \lambda\right)$-SEDF does not exist.

{\rm (2)} Let $q$ be a prime such that $q^{2c}||(m-1)$ or $q \nmid (m-1)$. Let $p$ be an odd prime such that $p\mid \left(q^{2b+1}l^2+1\right)$. If $q$ is self-conjugate modulo $p$ and $p\nmid (m-2)k$, then a $\left(q^{2b+1}l^2+1, m, k, \lambda\right)$-SEDF does not exist.
\end{theorem}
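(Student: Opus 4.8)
The plan is to mirror the proof of Theorem~\ref{bcm0} almost verbatim, the only structural change being that the self-conjugacy of $q$ modulo $p$ is now a hypothesis rather than a free consequence of $p\mid v$ (indeed $p\mid q^{2b}l^2+1$ no longer forces $q^{r}\equiv-1\pmod p$ because of the extra factor $l^2$). I would argue by contradiction in each part: assume a $(v,m,k,\lambda)$-SEDF exists, feed the parameter identity $(m-1)k^2=\lambda(v-1)$ from Equation~(\ref{mk2=lambda(n-1)}) into a $q$-adic valuation count to force $q^{2d+1}||\lambda$ for some $d\geq 0$, and then invoke Theorem~\ref{k(m-2)1} with this odd-valuation prime $q$ and the odd prime $p\mid v$ to derive $p\mid(m-2)k$, contradicting the assumption $p\nmid(m-2)k$. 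The hypothesis $m>2$ guarantees $m\geq 3$, as required by Theorem~\ref{k(m-2)1}.

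For part~(1), I would set $v=q^{2b}l^2+1$, so that $v-1=q^{2b}l^2$ and hence $(m-1)k^2=\lambda q^{2b}l^2$. Comparing the exponent of $q$ on both sides, the factors $k^2$, $l^2$ and $q^{2b}$ all contribute even powers of $q$, while $q^{2c+1}||(m-1)$ contributes an odd power; therefore the exponent of $q$ in $\lambda$ must be odd, i.e.\ $q^{2d+1}||\lambda$ for some $d\geq 0$, and in particular $q\mid\lambda$. Now $p$ is an odd prime dividing $v$, say $p^a||v$, and $q$ is a prime divisor of $\lambda$, so Theorem~\ref{k(m-2)1} applies. Since $q$ is self-conjugate modulo $p$ by hypothesis and $u:=2d+1$ is odd, the parity clause in condition~(2) of Theorem~\ref{k(m-2)1} fails, so condition~(2) cannot hold; hence condition~(1) must hold, giving $p\mid(m-2)k$, the desired contradiction.

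Part~(2) runs identically after replacing $q^{2b}$ by $q^{2b+1}$. Here $v-1=q^{2b+1}l^2$ gives $(m-1)k^2=\lambda q^{2b+1}l^2$; the hypothesis that $q^{2c}||(m-1)$ or $q\nmid(m-1)$ guarantees that the exponent of $q$ in $(m-1)$ is even, so the left-hand side has even $q$-valuation, while $q^{2b+1}$ forces an odd contribution on the right (again $k^2$ and $l^2$ contribute only even powers). As before this pins the $q$-valuation of $\lambda$ to be odd, and the same application of Theorem~\ref{k(m-2)1}, using the self-conjugacy of $q$ modulo $p$, yields $p\mid(m-2)k$, a contradiction.

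The only genuinely new point, and the place where a little care is needed, is the valuation bookkeeping: one must observe that the factor $l^2$ of unknown $q$-content contributes an even power of $q$ and so never disturbs the parity of the overall $q$-exponent. This is exactly what lets the $l^2$-version reduce to the same odd-valuation conclusion $q^{2d+1}||\lambda$ as in Theorem~\ref{bcm0}. I do not anticipate any substantive obstacle beyond this observation: once the odd valuation of $\lambda$ at $q$ is established, all of the real work is discharged by Theorem~\ref{k(m-2)1}, and the assumed self-conjugacy of $q$ modulo $p$ is precisely what blocks its alternative~(2).
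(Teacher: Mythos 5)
Your proposal is correct and follows essentially the same route as the paper: the paper omits the proof, stating it is similar to that of Theorem~\ref{bcm0}, and your argument is exactly that adaptation --- using $(m-1)k^2=\lambda(v-1)$ to force an odd $q$-valuation of $\lambda$ (correctly observing that the $l^2$ factor only contributes even $q$-powers), and then invoking Theorem~\ref{k(m-2)1}, whose alternative~(2) is blocked by the now-hypothesized self-conjugacy of $q$ modulo $p$ together with $q^{2d+1}\|\lambda$, yielding $p\mid(m-2)k$ and the desired contradiction. You also correctly identify why self-conjugacy must be assumed here rather than derived, since $p\mid q^{2b}l^2+1$ no longer gives $q^{r}\equiv-1\pmod p$.
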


Applying Theorem \ref{k(m-2)1}, we obtain the following corollary.
\begin{corollary}\label{k(m-2)}
Suppose there exists a $(v,m, k, \lambda)$-SEDF in $G$ with $m\geq 3$. Let $p>2$ be a prime divisor of $v$ and write $v=p^av'$ with $\gcd(v',p)=1$.  If $p^a \nmid mk$, then $k(m-2)\equiv 0\pmod {p}$ holds.
\end{corollary}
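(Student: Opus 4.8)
The plan is to deduce this directly from Theorem \ref{k(m-2)1}, or equivalently from Lemma \ref{FIF0} together with Remark \ref{0p}; the corollary is essentially the contrapositive reading of the dichotomy established in that theorem. First I would fix a $(v,m,k,\lambda)$-SEDF $\{D_1,\ldots,D_m\}$ in $G$ and set $D=\bigcup_{i=1}^m D_i$. Since the $D_i$ are pairwise disjoint $k$-subsets of $G$, we have $|D|=mk$, so the hypothesis $p^a\nmid mk$ is exactly the statement that $p^a\nmid|D|$.

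Next I would invoke Lemma \ref{FIF0}. Its hypotheses are met: $p>2$ is a prime divisor of $|G|=v$, we may write $|G|=v=p^av'$ with $\gcd(v',p)=1$ (matching the decomposition required by the lemma, its coprime part being our $v'$), and $p^a\nmid|D|$ as just observed. The lemma then yields a nonprincipal character $\chi$ of $G$ of order $p^s$ for some $1\leq s\leq a$ with $\chi(D)\neq 0$. Finally, Remark \ref{0p} records precisely that the existence of such a character forces $p\mid (m-2)k$, which is the desired congruence $k(m-2)\equiv 0\pmod p$.

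There is no genuine obstacle here, since all the analytic work---the Fourier-inversion argument behind Lemma \ref{FIF0} and the group-ring/character computation in the proof of Theorem \ref{k(m-2)1} underlying Remark \ref{0p}---has already been carried out. The only points to verify are bookkeeping ones: that $|D|=mk$ and that the decompositions of $v$ used in the corollary and in Lemma \ref{FIF0} are compatible. Alternatively, one can phrase the argument as a direct appeal to Theorem \ref{k(m-2)1}: that theorem asserts that at least one of its two conditions holds, and the hypothesis $p^a\nmid mk$ is incompatible with the clause $p^a\mid mk$ of condition (2), so condition (1), namely $k(m-2)\equiv 0\pmod p$, must hold. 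I would note that this second phrasing presupposes a prime divisor $q$ of $\lambda$, which exists whenever $\lambda>1$; in the remaining case $\lambda=1$ the SEDF is trivial with $k=1$, so $mk=v$ and hence $p^a\mid mk$, making the hypothesis $p^a\nmid mk$ void and the statement vacuously true. The route through Lemma \ref{FIF0} and Remark \ref{0p} sidesteps this last subtlety entirely, so I would present that one as the main argument.
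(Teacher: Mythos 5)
Your proposal is correct and follows essentially the same route as the paper, which states Corollary \ref{k(m-2)} as a direct application of Theorem \ref{k(m-2)1}; your main argument via Lemma \ref{FIF0} and Remark \ref{0p} simply unwinds the two-case structure of that theorem's proof (if $p^a\nmid mk=|D|$ then Case (ii) is impossible, so Case (i) gives $p\mid (m-2)k$). Your side remark disposing of the $\lambda=1$ case, where the theorem's hypothesis of a prime divisor $q$ of $\lambda$ fails, is a careful and valid supplement that the paper glosses over.
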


We now illustrate the use of Corollary \ref{k(m-2)} to rule out the existence of a $(2500,35,42,24)$-SEDF and a $(6400,80,54,36)$-SEDF.
\begin{example}
There does not exist a $(2500,35,42,24)$-SEDF in any abelian group $G$.
\end{example}
\begin{proof}
Suppose, for a contradiction, that there exists a $(2500,35,42,24)$-SEDF in $G$. Since $5^4\mid 2500$ and $5^4\nmid 35\times 42$, by applying Corollary \ref{k(m-2)} with $p=5$ and $a=4$, we obtain $5\mid 33\times 42$, a contradiction. \qed
\end{proof}

\begin{example}
There does not exist a $(6400,80,54,36)$-SEDF in any abelian group $G$.
\end{example}
\begin{proof}
Suppose, for a contradiction, that there exists a $(6400,80,54,36)$-SEDF in $G$. Since $5^2\mid 6400$ and $5^2\nmid 80\times 54$, by applying Corollary \ref{k(m-2)} with $p=5$ and $a=2$, we obtain $5\mid 78\times 54$, a contradiction.  \qed
\end{proof}

We derive further divisibility conditions on the SEDF parameters in Theorem \ref{pf}. We first require the following number-theoretic lemmas.

\begin{lemma}{\rm \cite{H2010}}\label{IB}
Let $p$ be a prime and let $b$ be a positive integer. Let $X=\sum_{i=0}^{p^{b}-1}c_i\zeta_{p^b}^i$, where each $c_i$ is an integer. Then $X=0$ if and only if $c_i=c_j$ for all $i$ and $j$ satisfying $i\equiv j \pmod{p^{b-1}}.$
\end{lemma}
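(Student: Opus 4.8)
The plan is to translate the vanishing of the cyclotomic integer $X$ into a divisibility statement for an ordinary integer polynomial, and then read off the resulting coefficient pattern. Write $\zeta=\zeta_{p^b}$ and associate to $X$ the polynomial $f(x)=\sum_{i=0}^{p^b-1}c_ix^i\in\mathbb{Z}[x]$, so that $X=f(\zeta)$. The central object is the $p^b$-th cyclotomic polynomial, which for a prime power takes the simple form
\[
\Phi_{p^b}(x)=1+x^{p^{b-1}}+x^{2p^{b-1}}+\cdots+x^{(p-1)p^{b-1}}=\sum_{s=0}^{p-1}x^{sp^{b-1}},
\]
is monic and irreducible over $\mathbb{Q}$, and has degree $\varphi(p^b)=p^b-p^{b-1}$. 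It is the minimal polynomial of $\zeta$ over $\mathbb{Q}$.

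For the \emph{if} direction, I would assume $c_i=c_j$ whenever $i\equiv j\pmod{p^{b-1}}$ and write each index uniquely as $i=r+sp^{b-1}$ with $0\le r\le p^{b-1}-1$ and $0\le s\le p-1$. The hypothesis gives $c_{r+sp^{b-1}}=c_r$ for all $s$, so that
\[
X=\sum_{r=0}^{p^{b-1}-1}c_r\zeta^r\left(\sum_{s=0}^{p-1}\zeta^{sp^{b-1}}\right).
\]
Because $\zeta^{p^{b-1}}$ is a primitive $p$-th root of unity, the inner sum $\sum_{s=0}^{p-1}\zeta^{sp^{b-1}}=\Phi_{p^b}(\zeta)$ vanishes, whence $X=0$.

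For the \emph{only if} direction, which is the substantive one, I would start from $f(\zeta)=0$. Since $\Phi_{p^b}$ is the minimal polynomial of $\zeta$, it divides $f$ in $\mathbb{Q}[x]$; and as $\Phi_{p^b}$ is monic, Euclidean division over $\mathbb{Z}$ shows $f=g\,\Phi_{p^b}$ for some $g\in\mathbb{Z}[x]$ with $\deg g\le p^{b-1}-1$. Writing $g(x)=\sum_{r=0}^{p^{b-1}-1}g_rx^r$ and expanding
\[
f(x)=g(x)\Phi_{p^b}(x)=\sum_{r=0}^{p^{b-1}-1}\sum_{s=0}^{p-1}g_r\,x^{r+sp^{b-1}},
\]
I would observe that as $(r,s)$ ranges over the stated intervals the exponent $r+sp^{b-1}$ runs over $0,1,\ldots,p^b-1$ without repetition. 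Comparing coefficients then gives $c_{r+sp^{b-1}}=g_r$ for every $s$, so $c_i$ depends only on $i\bmod p^{b-1}$, which is exactly the required condition.

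The main obstacle is the only-if direction, and within it the passage from vanishing in $\mathbb{Z}[\zeta]$ to divisibility by $\Phi_{p^b}$ over $\mathbb{Z}$: this is precisely where the irreducibility of $\Phi_{p^b}$ (so that it is the minimal polynomial, not merely an annihilator) and its monicity (so that the quotient $g$ has integer coefficients) are needed. Once the factorization $f=g\,\Phi_{p^b}$ is in hand, the remaining work is the purely combinatorial bookkeeping of matching exponents, which presents no difficulty.
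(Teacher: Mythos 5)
Your proof is correct and complete. Note that the paper itself does not prove Lemma~\ref{IB}: it is imported as a known result from Hiramine \cite{H2010}, so there is no internal argument to compare against. Your argument is the standard one: the \emph{if} direction via $\Phi_{p^b}(\zeta)=\sum_{s=0}^{p-1}\zeta^{sp^{b-1}}=0$, and the \emph{only if} direction via minimality and monicity of $\Phi_{p^b}$, which yield $f=g\,\Phi_{p^b}$ with $g\in\mathbb{Z}[x]$ of degree at most $p^{b-1}-1$, followed by the observation that $(r,s)\mapsto r+sp^{b-1}$ enumerates $\{0,\ldots,p^b-1\}$ bijectively, forcing $c_{r+sp^{b-1}}=g_r$ for all $s$. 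You correctly isolate and justify the two points where care is needed — integrality of the quotient $g$ (monic Euclidean division over $\mathbb{Z}$, so no appeal to Gauss's lemma is even required) and irreducibility of $\Phi_{p^b}$ (so that it is the full minimal polynomial rather than a mere annihilator) — and the degenerate case $f\equiv 0$ is covered by the same division with $g=0$.
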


\begin{lemma}\label{NBIB}
Let $p$ be a prime and let $b$ be a positive integer. Let $a_1, a_2$ be two positive integers with $\gcd(a_1,a_2)=1$. Suppose that $X,Y\in \mathbb{Z}[\zeta_{p^b}]$ satisfy $X=\frac{a_1}{a_2}Y$.  
Then it holds that $$X\equiv 0 \pmod{a_1}\ {\rm  and}\ \ Y\equiv 0 \pmod{a_2}.$$
\end{lemma}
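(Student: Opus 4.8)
The plan is to pass to the unique integer-basis representation of elements of $\mathbb{Z}[\zeta_{p^b}]$ and then compare coefficients. First I would invoke Lemma \ref{BSC} with $n=p^b$ and $k=1$ (so that $t=1$ and $s=0$). Because $\zeta_p=\zeta_{p^b}^{p^{b-1}}$, the generic element $\zeta_{p}^{k_1}\zeta_{p^b}^{l_1}$ of $B_{p^b,1}$ equals $\zeta_{p^b}^{k_1 p^{b-1}+l_1}$, and as $k_1$ runs over $0,\dots,p-2$ and $l_1$ over $0,\dots,p^{b-1}-1$ the exponent runs over $0,1,\dots,\varphi(p^b)-1$. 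Hence $B_{p^b,1}$ collapses to the power basis $\{\zeta_{p^b}^{j}:0\le j\le \varphi(p^b)-1\}$, which is therefore an integral basis of $\mathbb{Z}[\zeta_{p^b}]$ over $\mathbb{Z}$. Consequently every element of $\mathbb{Z}[\zeta_{p^b}]$ has a \emph{unique} expansion as an integer combination of these powers, and such an element is divisible by an integer $n$ in $\mathbb{Z}[\zeta_{p^b}]$ if and only if each of its basis coefficients is divisible by $n$ (one direction is immediate; the converse follows since $\sum_j (x_j/n)\zeta_{p^b}^{j}\in\mathbb{Z}[\zeta_{p^b}]$ whenever every $x_j$ is a multiple of $n$).

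Next I would clear the denominator in the hypothesis. Writing $X=\sum_{j=0}^{\varphi(p^b)-1}x_j\zeta_{p^b}^{j}$ and $Y=\sum_{j=0}^{\varphi(p^b)-1}y_j\zeta_{p^b}^{j}$ with $x_j,y_j\in\mathbb{Z}$, the relation $X=\frac{a_1}{a_2}Y$ is equivalent to the identity $a_2 X=a_1 Y$ inside $\mathbb{Z}[\zeta_{p^b}]$. Substituting the two expansions and using the uniqueness of the integer-basis representation established in the first step yields $a_2 x_j=a_1 y_j$ for every $j$.

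Finally I would exploit the coprimality. From $a_2 x_j=a_1 y_j$ and $\gcd(a_1,a_2)=1$ it follows that $a_1\mid x_j$ and $a_2\mid y_j$ for every $j$. By the coefficientwise divisibility criterion noted in the first paragraph, this gives $X\equiv 0\pmod{a_1}$ and $Y\equiv 0\pmod{a_2}$, as claimed. I do not anticipate a genuine obstacle here; the only point needing care is the first step, namely confirming that $B_{p^b,1}$ is exactly the power basis so that comparison of coefficients is legitimate. One could instead argue directly via Lemma \ref{IB}, but then one must handle the redundant spanning set $\{1,\zeta_{p^b},\dots,\zeta_{p^b}^{p^b-1}\}$ and the relations $a_2 d_i-a_1 c_i=a_2 d_{i'}-a_1 c_{i'}$ for $i\equiv i'\pmod{p^{b-1}}$; the appeal to the integral basis is cleaner and avoids this bookkeeping.
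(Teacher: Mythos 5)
Your proof is correct, but it takes a genuinely different route from the paper's. The paper's own proof is precisely the ``bookkeeping'' variant you flagged in your closing remark: it expands $X=\sum_{i=0}^{p^b-1}c_i\zeta_{p^b}^i$ and $Y=\sum_{i=0}^{p^b-1}d_i\zeta_{p^b}^i$ in the redundant spanning set of all $p^b$ powers, applies Lemma \ref{IB} to $a_2X-a_1Y=0$ to obtain $a_2c_i-a_1d_i=a_2c_j-a_1d_j$ whenever $i\equiv j\pmod{p^{b-1}}$, deduces from $\gcd(a_1,a_2)=1$ that $c_i\equiv c_j\pmod{a_1}$ and $d_i\equiv d_j\pmod{a_2}$ along each residue class, and then exhibits the divisibility by rewriting $X=\sum_{i=0}^{p^{b-1}-1}\zeta_{p^b}^{i}\sum_{j=0}^{p-1}(c_{i+p^{b-1}j}-c_i)\zeta_{p^b}^{p^{b-1}j}$, which is legitimate because $\sum_{j=0}^{p-1}\zeta_{p^b}^{p^{b-1}j}=0$. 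Your version instead verifies that $B_{p^b,1}$ from Lemma \ref{BSC} collapses to the power basis $\{\zeta_{p^b}^{j}:0\le j\le\varphi(p^b)-1\}$ (the exponent computation $k_1p^{b-1}+l_1$ running bijectively over $0,\dots,\varphi(p^b)-1$ is right), so that uniqueness of the expansion makes the coefficient comparison $a_2x_j=a_1y_j$ and the coefficientwise divisibility criterion immediate; the clearing of denominators and the coprimality step are also sound. What each approach buys: yours is shorter and conceptually cleaner once a genuine integral basis is in hand, since no rewriting trick is needed; the paper's stays entirely within Lemma \ref{IB}, a tool it needs anyway elsewhere (e.g., in the proof of Theorem \ref{pfK}), and so avoids invoking the integral-basis machinery at the cost of the extra manipulation with the redundant spanning set.
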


\begin{proof}
Set $X=\sum\limits_{i=0}^{p^{b}-1}c_i\zeta_{p^b}^i$ and $Y=\sum\limits_{i=0}^{p^{b}-1}d_{i}\zeta_{p^b}^i$,  where $c_i$ and $d_{i}$ are integers. Since  $X=\frac{a_1}{a_2}Y$, we have $a_2X-a_1Y=0.$ Thus, $\sum\limits_{i=0}^{p^{b}-1}(a_2c_i-a_1d_i)\zeta_{p^b}^i=0$. By Lemma \ref{IB}, we have
$$a_2c_i-a_1d_i=a_2c_j-a_1d_j\ {\rm for\ all}\ i\ {\rm and}\ j\ {\rm with}\ i\equiv j \pmod{p^{b-1}}.$$
Then we have
$$a_2(c_i-c_j)=a_1(d_i-d_j)\ {\rm for\ all}\ i\ {\rm and}\ j\ {\rm with}\ i\equiv j \pmod{p^{b-1}}.$$
Since gcd$(a_1,a_2)=1$, we obtain
$$d_i\equiv d_j \pmod{a_2} \ {\rm and}\ c_i\equiv c_j \pmod{a_1}\ {\rm for\ all}\ i\ {\rm and}\ j\ {\rm with}\ i\equiv j \pmod{p^{b-1}}.$$
Thus, it holds that
\[
\begin{array}{l}
\vspace{0.2cm}X=\sum\limits_{i=0}^{p^{b}-1}c_i\zeta_{p^b}^i=\sum\limits_{i=0}^{p^{b-1}-1}\zeta_{p^b}^{i}\sum\limits_{j=0}^{p-1}(c_{i+p^{b-1}j}-c_i)\zeta_{p^b}^{p^{b-1}j}\equiv 0 \pmod{a_1},\ \ \ {\rm and}\\
Y=\sum\limits_{i=0}^{p^{b}-1}d_i\zeta_{p^b}^i=\sum\limits_{i=0}^{p^{b-1}-1}\zeta_{p^b}^{i}\sum\limits_{j=0}^{p-1}(d_{i+p^{b-1}j}-d_i)\zeta_{p^b}^{p^{b-1}j}\equiv 0 \pmod{a_2}.
\end{array}
\]
This completes the proof. \qed
\end{proof}

Applying Lemma \ref{NBIB}, we obtain some information about $\sum\limits_{i=1}^m\chi(D_i)$.
\begin{corollary}\label{pf1}
Suppose there exists a $(v,m, k, \lambda)$-SEDF, $\{D_1,\ldots, D_m\}$ in $G$ with $m\geq 3$. Let $p>2$ be a prime divisor of $v$ and let $\chi$ be a nonprincipal character of $G$ of order $p^a$ such that $\sum\limits_{i=1}^m\chi(D_i)\neq0$. Then it holds that
\[
\begin{array}{l}
\left\{\begin{array}{ll}
\vspace{0.2cm}\sum\limits_{i=1}^m\chi(D_i)=2a_{\chi} X\ {\rm for\ some}\ X\in \mathbb{Z}[\zeta_{p^a}]\ {\rm such\ that}\ X\overline{X}=\frac{\lambda}{b_{\chi}^2-a_{\chi}^2},  & {\rm  if}  \ {\rm gcd}(a_{\chi}+b_{\chi},2a_{\chi})=1,\  \\
\sum\limits_{i=1}^m\chi(D_i)=a_{\chi} X\ {\rm for\ some}\ X\in \mathbb{Z}[\zeta_{p^a}]\ {\rm such\ that}\ X\overline{X}=\frac{4\lambda}{b_{\chi}^2-a_{\chi}^2},  & {\rm \ otherwise, \ }
\end{array}
\right .
\end{array}
\]
where $a_{\chi}, b_{\chi}$ are as defined in Equation {\rm(\ref{AX})}.
\end{corollary}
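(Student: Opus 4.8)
The plan is to pin down a single index $j$ for which $\chi(D_j)$ is an explicit rational multiple of $\chi(D)=\sum_{i=1}^m\chi(D_i)$, and then to feed that relation into Lemma \ref{NBIB} after reducing the multiplier to lowest terms. Since $\chi$ is nonprincipal and $\chi(D)\neq 0$, it lies in $\widehat{G}^+\cup\widehat{G}^-$, so the analysis leading up to Table I applies: by Equations (\ref{Dj2}), (\ref{E1}) and (\ref{AX}) every $\chi(D_j)$ equals either $\frac{a_\chi+b_\chi}{2a_\chi}\chi(D)$ or $\frac{a_\chi-b_\chi}{2a_\chi}\chi(D)$, and by Equation (\ref{E2}) the number of indices of the first type is $x=\frac{m}{2}-\frac{(m-2)a_\chi}{2b_\chi}$, which is a positive integer because $b_\chi>a_\chi>0$ and $m\geq 3$. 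Hence there is an index $j$ with $\chi(D_j)=\frac{a_\chi+b_\chi}{2a_\chi}\chi(D)$. Because $\chi$ has order $p^a$, both $\chi(D)$ and $\chi(D_j)$ lie in $\mathbb{Z}[\zeta_{p^a}]$, placing us exactly in the setting of Lemma \ref{NBIB}.

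Next I would reduce the fraction $\frac{a_\chi+b_\chi}{2a_\chi}$. From Equation (\ref{AX}) we have $\gcd(a_\chi,b_\chi)=1$, hence $\gcd(a_\chi+b_\chi,a_\chi)=\gcd(b_\chi,a_\chi)=1$, which forces $\gcd(a_\chi+b_\chi,2a_\chi)=\gcd(a_\chi+b_\chi,2)\in\{1,2\}$, the two cases corresponding to $a_\chi+b_\chi$ being odd or even. If $\gcd(a_\chi+b_\chi,2a_\chi)=1$, the fraction is already in lowest terms, and applying Lemma \ref{NBIB} with $a_1=a_\chi+b_\chi$, $a_2=2a_\chi$ to $\chi(D_j)=\frac{a_\chi+b_\chi}{2a_\chi}\chi(D)$ gives $\chi(D)\equiv 0\pmod{2a_\chi}$, i.e. $\chi(D)=2a_\chi X$ for some $X\in\mathbb{Z}[\zeta_{p^a}]$. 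Otherwise $a_\chi+b_\chi$ is even, so $\frac{a_\chi+b_\chi}{2a_\chi}=\frac{(a_\chi+b_\chi)/2}{a_\chi}$ with $\gcd\big((a_\chi+b_\chi)/2,\,a_\chi\big)=1$, and Lemma \ref{NBIB} then yields $\chi(D)\equiv 0\pmod{a_\chi}$, i.e. $\chi(D)=a_\chi X$ for some $X\in\mathbb{Z}[\zeta_{p^a}]$.

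Finally I would evaluate $X\overline{X}$. By Lemma \ref{integer1} the quantity $\chi(D)\overline{\chi(D)}=|\chi(D)|^2$ is a positive integer, and from Table I (which applies since $\chi\in\widehat{G}^+\cup\widehat{G}^-$) it equals $\frac{4a_\chi^2\lambda}{b_\chi^2-a_\chi^2}$. As $2a_\chi$ and $a_\chi$ are real, conjugation acts only on $\chi(D)$, so in the coprime case $X\overline{X}=\frac{\chi(D)\overline{\chi(D)}}{(2a_\chi)^2}=\frac{\lambda}{b_\chi^2-a_\chi^2}$, and in the remaining case $X\overline{X}=\frac{\chi(D)\overline{\chi(D)}}{a_\chi^2}=\frac{4\lambda}{b_\chi^2-a_\chi^2}$, which are precisely the two displayed conclusions. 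The step I expect to require the most care is the reduction of $\frac{a_\chi+b_\chi}{2a_\chi}$ to lowest terms together with the verification that the coprimality hypothesis of Lemma \ref{NBIB} holds in each case; once the correct pair $(a_1,a_2)$ is identified, both the divisibility and the evaluation of $X\overline{X}$ are immediate. I would also emphasize that the congruence $\chi(D)\equiv 0\pmod{2a_\chi}$ (respectively $\pmod{a_\chi}$) delivered by Lemma \ref{NBIB} is exactly the statement that the quotient $X$ lies in $\mathbb{Z}[\zeta_{p^a}]$, so no separate integrality argument for $X$ is needed.
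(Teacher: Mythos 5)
Your proposal is correct and takes essentially the same route as the paper's proof: both isolate an index $j$ with $\chi(D_j)=\frac{a_{\chi}+b_{\chi}}{2a_{\chi}}\chi(D)$, split into the two cases $\gcd(a_{\chi}+b_{\chi},2a_{\chi})\in\{1,2\}$, invoke Lemma \ref{NBIB} on the reduced fraction to get $\chi(D)\equiv 0\pmod{2a_{\chi}}$ (respectively $\pmod{a_{\chi}}$), and then compute $X\overline{X}$ by dividing $\chi(D)\overline{\chi(D)}=\frac{4a_{\chi}^2\lambda}{b_{\chi}^2-a_{\chi}^2}$ by the square of the extracted factor. Your explicit verification via Equation (\ref{E2}) that $x>0$ is simply a more careful justification of the paper's terser assertion that $|\widehat{G}^{+}|>0$, so no substantive difference remains.
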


\begin{proof}
Set $D=\bigcup\limits_{i=1}^m D_i$. 
Since $\chi(D)\neq 0$, by Equation (\ref{AX}) we have
\begin{equation}\label{XXX}
\chi(D)\overline{\chi(D)}=\frac{4a_{\chi}^2\lambda}{b_{\chi}^2-a_{\chi}^2}.
\end{equation}
Since gcd$(a_{\chi},b_{\chi})=1$, we have gcd$(a_{\chi}+b_{\chi},2a_{\chi})$=1 or 2. We distinguish two cases.

{\bf Case (i)} gcd$(a_{\chi}+b_{\chi},2a_{\chi})$=1.

Since $m\geq 3$, we have $|\widehat{G}^{+}|>0$. Without loss of generality, we assume that $\chi(D_1)= \frac{a_{\chi}+b_{\chi}}{2a_{\chi}}\chi(D)$ and $\chi(D_m)= \frac{a_{\chi}-b_{\chi}}{2a_{\chi}}\chi(D)$. By using Lemma \ref{NBIB}, we obtain
$$\chi(D_1) \equiv 0\pmod {a_{\chi}+b_{\chi}},\ \ \chi(D_m) \equiv 0\pmod {a_{\chi}-b_{\chi}}\ {\rm and}\ \chi(D) \equiv 0\pmod {2a_{\chi}}.$$
 Then there exists an $X\in \mathbb{Z}[\zeta_{p^a}]$ such that
\begin{equation}\label{XX}
\chi(D_1)=\big{(}a_{\chi}+b_{\chi}\big{)}X, \ \ \chi(D_m)=\big{(}a_{\chi}-b_{\chi}\big{)}X\ {\rm and}\ \chi(D)=2a_{\chi}X.
\end{equation}
Pluging Equation (\ref{XX}) in Equation (\ref{XXX}), we have
\begin{equation}\label{XX1}
X\overline{X}=\frac{\lambda}{b_{\chi}^2-a_{\chi}^2}.
\end{equation}

{\bf Case (ii)} gcd$(a_{\chi}+b_{\chi},2a_{\chi})$=2.

In this case, we have gcd$(\frac{a_{\chi}+b_{\chi}}{2},a_{\chi})$=1. Since $\chi(D_1)= \frac{a_{\chi}+b_{\chi}}{2a_{\chi}}\chi(D)$ and $\chi(D_m)= \frac{a_{\chi}-b_{\chi}}{2a_{\chi}}\chi(D)$, by Lemma \ref{NBIB} we have
$$\chi(D_1) \equiv 0\pmod {\frac{a_{\chi}+b_{\chi}}{2}},\ \ \chi(D_m) \equiv 0\pmod {\frac{a_{\chi}-b_{\chi}}{2}}\  {\rm and}\ \chi(D) \equiv 0\pmod {a_{\chi}}.$$
Then there exists an $X\in \mathbb{Z}[\zeta_{p^a}]$ such that
\begin{equation}\label{XX'}
\chi(D_1)=\left(\frac{a_{\chi}+b_{\chi}}{2}\right)X,\ \ \chi(D_m)=\left(\frac{a_{\chi}-b_{\chi}}{2}\right) X \  {\rm and}\ \chi(D)=a_{\chi}X.
\end{equation}
Pluging Equation (\ref{XX'}) in Equation (\ref{XXX}), we have
\begin{equation}\label{XX2}
X\overline{X}=\frac{4\lambda}{b_{\chi}^2-a_{\chi}^2}.
\end{equation}
This completes the proof. \qed
\end{proof}

The key to our results on SEDFs is to study solutions of $XY=N$, where $X=r_YY$ and $r_Y\in \mathbb{Q}$. Recently, K. H. Leung and B. Schmidt \cite{LS2016}, \cite{LS2019} gave the following necessary conditions for the existence of the solutions of $X\overline{X}=N$. For relatively prime integers $t$ and $s$, we denote the multiplicative order of $t$ modulo $s$ by $\text{ord}_s(t)$. 

\begin{theorem}{\rm \cite{LS2016, LS2019} }\label{Kv^2}
Let $p$ be an odd prime and let $a,N$ be positive integers with $\gcd(N,p)=1$. Suppose that $X\in \mathbb{Z}[\zeta_{p^a}]$ satisfies $X\overline{X}=N$. Let $q_1,\ldots,q_s$ be distinct prime divisors of $N$ and set
$$f=\gcd\big{(}\text{ord}_p(q_1),\ldots,\text{ord}_p(q_s)\big{)}.$$
Let $K$ be the subfield of $\mathbb{Q}(\zeta_{p^a})$ with $[\mathbb{Q}(\zeta_{p^a}):K]=f$. Then $X\zeta_{p^a}^i\in K$ for some integer $i$.
\end{theorem}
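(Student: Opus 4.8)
The plan is to run a field-descent argument at the level of prime ideals and then promote the ideal-theoretic conclusion to the level of elements. Write $G=\mathrm{Gal}(\mathbb{Q}(\zeta_{p^a})/\mathbb{Q})$, which by Lemma \ref{BOOK1} is isomorphic to $\mathbb{Z}_{p^a}^*$ and is \emph{cyclic} because $p$ is odd. Since each $\mathrm{ord}_p(q_j)$ divides $p-1$, so does $f=\gcd_j\mathrm{ord}_p(q_j)$, and hence $f\mid \varphi(p^a)=|G|$; thus there is a unique subgroup $H\le G$ with $|H|=f$, and $K=\mathrm{Fix}(H)$ is exactly the subfield with $[\mathbb{Q}(\zeta_{p^a}):K]=f$. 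The goal then becomes to produce an integer $i$ with $X\zeta_{p^a}^i$ fixed by $H$.

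First I would factor the ideal $(N)$. Writing $N=\prod_j q_j^{v_j}$, each $q_j$ is prime to $p$, so by Theorem \ref{g-1} the ideal $(q_j)$ splits in $\mathbb{Z}[\zeta_{p^a}]$ into distinct primes all having the same decomposition group $D_j=\langle\sigma_{q_j}\rangle$ (Lemma \ref{BOOK2}), of order $f_j=\mathrm{ord}_{p^a}(q_j)$; here I use that $G$ is abelian, so every prime over $q_j$ has the same decomposition group. The key containment is $H\subseteq D_j$ for all $j$: indeed $\mathrm{ord}_p(q_j)\mid \mathrm{ord}_{p^a}(q_j)=f_j$ gives $f\mid f_j$, and in the cyclic group $G$ the subgroup of order $f$ lies inside the unique subgroup $D_j$ of order $f_j$. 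Consequently every element of $H$ fixes each prime dividing $(N)$. Since $X\overline{X}=N$ forces $(X)$ to be a product of these primes, every $\sigma\in H$ satisfies $\sigma\big((X)\big)=(X)$, that is, $\sigma(X)=u_\sigma X$ for some unit $u_\sigma\in\mathbb{Z}[\zeta_{p^a}]$.

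Next I would pin down $u_\sigma$. Applying $\sigma$ to $X\overline{X}=N$ and using that $\mathbb{Q}(\zeta_{p^a})$ is a CM field (so complex conjugation commutes with every embedding) shows $u_\sigma\overline{u_\sigma}=1$ and that every conjugate of $u_\sigma$ has absolute value $1$; by Kronecker's theorem $u_\sigma$ is a root of unity, hence $u_\sigma=\pm\zeta_{p^a}^{c_\sigma}$. To kill the sign I reduce modulo the unique prime $\mathcal{P}=(1-\zeta_{p^a})$ above $p$: since $p$ is totally ramified, $G$ equals its own inertia group and therefore acts trivially on the residue field $\mathbb{F}_p$, so $\sigma(X)\equiv X\pmod{\mathcal P}$; as $X$ is prime to $\mathcal{P}$ this forces $u_\sigma\equiv1$, and because $\zeta_{p^a}\equiv1\pmod{\mathcal P}$ the sign must be $\equiv1\pmod{\mathcal P}$, which is impossible for $-1$ when $p$ is odd. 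Hence $u_\sigma=\zeta_{p^a}^{c_\sigma}\in\langle\zeta_{p^a}\rangle$.

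Finally, the assignment $\sigma\mapsto u_\sigma$ is a $1$-cocycle of the cyclic group $H=\langle\sigma_0\rangle$ into $\langle\zeta_{p^a}\rangle$; writing $\sigma_0(\zeta_{p^a})=\zeta_{p^a}^{t}$ with $t$ of order exactly $f$ modulo $p^a$, I want an integer $i$ solving $u_{\sigma_0}=\zeta_{p^a}^{i(1-t)}$, for then $X\zeta_{p^a}^i$ is $\sigma_0$-fixed, hence $H$-fixed, hence in $K$. The cocycle identity gives the telescoping relation $\prod_{l=0}^{f-1}\sigma_0^l(u_{\sigma_0})=1$, i.e. $p^a\mid c_{\sigma_0}\,(1+t+\cdots+t^{f-1})$, and solvability of the displayed congruence amounts to $\gcd(t-1,p^a)\mid c_{\sigma_0}$. \textbf{This last step is where I expect the real work to lie}: deducing the divisibility from the norm relation is a lifting-the-exponent computation on $t^f-1=(t-1)(1+t+\cdots+t^{f-1})$, using that $t$ has order exactly $f$ modulo $p^a$ and that $p$ is odd, with the cases $t\equiv1\pmod p$ and $t\not\equiv1\pmod p$ handled separately. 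Everything before this — the ideal factorization, the containment $H\subseteq D_j$, and the root-of-unity and sign arguments — is structural and should go through cleanly.
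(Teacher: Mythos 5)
Your argument is sound, and a direct comparison with the paper is not possible here: the paper does not prove Theorem \ref{Kv^2} at all, but imports it from Leung--Schmidt \cite{LS2016, LS2019}. What you have written is essentially a reconstruction of the standard Leung--Schmidt field-descent proof: the containment $H\subseteq D_j$ of the order-$f$ subgroup in each decomposition group $\langle\sigma_{q_j}\rangle$ (valid since $f\mid \mathrm{ord}_p(q_j)\mid \mathrm{ord}_{p^a}(q_j)$ and $G$ is cyclic), the conclusion $\sigma(X)=u_\sigma X$ with $u_\sigma$ a unit, Kronecker's theorem giving $u_\sigma=\pm\zeta_{p^a}^{c_\sigma}$, and the reduction modulo the totally ramified prime $(1-\zeta_{p^a})$ to kill the sign (using that the residue field is $\mathbb{F}_p$ and $p$ is odd) are all exactly the right structural steps, and each is justified correctly.

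The one substantive comment: the step you flag as ``where I expect the real work to lie'' is in fact vacuous, and you already wrote down the fact that trivializes it. You observed at the outset that $f\mid p-1$. If $f=1$ there is nothing to prove, since then $K=\mathbb{Q}(\zeta_{p^a})$. If $f>1$, the generator $\sigma_0=\sigma_t$ of $H$ has $t$ of multiplicative order $f$ modulo $p^a$ with $\gcd(f,p)=1$ and $f>1$; were $t\equiv 1\pmod p$, then $t$ would lie in the kernel of the reduction $\mathbb{Z}_{p^a}^*\to\mathbb{Z}_p^*$, a group of order $p^{a-1}$, forcing $f$ to be a power of $p$ --- a contradiction. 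Hence $t\not\equiv 1\pmod p$, so $\gcd(t-1,p^a)=1$ and the congruence $c_{\sigma_0}+i(t-1)\equiv 0\pmod{p^a}$ is solvable for every value of $c_{\sigma_0}$, namely $i\equiv -c_{\sigma_0}(t-1)^{-1}\pmod{p^a}$. No lifting-the-exponent computation is needed, the case $t\equiv 1\pmod p$ never occurs, and even the telescoping cocycle relation $p^a\mid c_{\sigma_0}(1+t+\cdots+t^{f-1})$ can be discarded. With that simplification your proof is complete as written.
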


\begin{theorem}{\rm \cite{LS2019}}\label{N^2+N+1}
Let $p$ be a prime. Let $N$ be a square-free integer not divisible by $p$ and let $q_1,\ldots,q_s$ be the distinct prime divisors of $N$. Write $f=\gcd\big{(}\text{ord}_p(q_1),\ldots,\text{ord}_p(q_s)\big{)}$ and let $t$ be an integer with $\text{ord}_{p^a}(t)=f.$ Let $\sigma$ be the automorphism of $Q(\zeta_{p^a})$ determined by $\zeta_{p^a}^{\sigma}=\zeta_{p^a}^t$. Suppose $X\in \mathbb{Z}(\zeta_{p^a})$ satisfies $X\overline{X}=N$, 
\vspace{0.1cm}then  the following holds.\\
\vspace{0.1cm}{\rm (1)} $p\leq N^2+N+1$. \\
\vspace{0.1cm}{\rm (2)} $f$ is odd.\\
\vspace{0.1cm}{\rm (3)} There exists an integer $i$ such that $(X\zeta_{p^a}^i)^{\sigma}=X\zeta_{p^a}^i$.
\end{theorem}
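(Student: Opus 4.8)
The plan is to handle the three assertions in increasing order of difficulty, squeezing as much as possible out of Theorem \ref{Kv^2}. Throughout I would identify $\text{Gal}(\mathbb{Q}(\zeta_{p^a})/\mathbb{Q})$ with $\mathbb{Z}_{p^a}^*$ via $\sigma_c\mapsto c$, so that this group is cyclic of order $\varphi(p^a)=p^{a-1}(p-1)$, complex conjugation is the unique involution $\sigma_{-1}$, and $\sigma=\sigma_t$ generates the unique subgroup $\langle\sigma\rangle$ of order $f=\text{ord}_{p^a}(t)$. Since the group is cyclic, the field $K$ of Theorem \ref{Kv^2}, characterized by $[\mathbb{Q}(\zeta_{p^a}):K]=f$, is exactly the fixed field of $\langle\sigma\rangle$. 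For part (3) I would simply apply Theorem \ref{Kv^2} to get an integer $i$ with $X\zeta_{p^a}^i\in K$; membership in the fixed field of $\langle\sigma\rangle$ is the same as being fixed by $\sigma$, so $(X\zeta_{p^a}^i)^{\sigma}=X\zeta_{p^a}^i$, which is (3). I then set $Y=X\zeta_{p^a}^i$; because conjugation commutes with every automorphism of an abelian extension, $\overline{Y}=\overline{X}\zeta_{p^a}^{-i}$, hence $Y\overline{Y}=X\overline{X}=N$ with $Y\in K$, and since $K/\mathbb{Q}$ is Galois, $\overline{Y}\in K$ as well. This normalized $Y$ is what I would use for the other two parts.

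For part (2) I would argue by contradiction: suppose $f$ is even. Then the cyclic group $\langle\sigma\rangle$ of even order $f$ contains an element of order $2$, which must be the unique involution $\sigma_{-1}$ of $\mathbb{Z}_{p^a}^*$. Thus complex conjugation lies in $\text{Gal}(\mathbb{Q}(\zeta_{p^a})/K)$, fixing $K$ pointwise, so $K\subseteq\mathbb{R}$. Then $Y\in K$ is real, $\overline{Y}=Y$, and $Y^2=N$, forcing $\mathbb{Q}(\sqrt{N})\subseteq K\subseteq\mathbb{Q}(\zeta_{p^a})$. But a cyclic Galois group has a unique quadratic subfield, namely $\mathbb{Q}(\sqrt{p^{*}})$ with $p^{*}=(-1)^{(p-1)/2}p$; as $N>1$ is square-free with $\gcd(N,p)=1$, $\mathbb{Q}(\sqrt{N})\neq\mathbb{Q}(\sqrt{p^{*}})$, a contradiction. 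Hence $f$ is odd.

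Part (1) is where I expect the real work. First I would record the ideal-theoretic constraint forced by square-freeness. In $\mathcal{O}_K$ one has $(Y)(\overline{Y})=(N)=\prod_{j}(q_j)\mathcal{O}_K$; since each $q_j\neq p$ is unramified, every prime $\mathfrak{Q}$ of $K$ above $q_j$ occurs to the first power, hence divides exactly one of $(Y),(\overline{Y})$. As conjugation sends $(Y)$ to $(\overline{Y})$ and permutes the primes above $q_j$, a self-conjugate prime $\mathfrak{Q}=\overline{\mathfrak{Q}}$ would have to divide both, which is impossible; so no prime of $K$ above any $q_j$ is self-conjugate. Because $f=\gcd_j\text{ord}_p(q_j)$ forces $\langle\sigma\rangle\subseteq\langle q_j\rangle$ in $\mathbb{Z}_{p^a}^*$, the decomposition group of $q_j$ in $\text{Gal}(K/\mathbb{Q})$ is $\langle q_j\rangle/\langle\sigma\rangle$, and non-self-conjugacy is equivalent to $-1\notin\langle q_j\rangle$, i.e. $\text{ord}_{p^a}(q_j)$ is odd for every $j$.

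The hard part is converting these parity and decomposition constraints into the explicit bound $p\leq N^2+N+1$. The route I would take is to reduce modulo the unique (totally ramified) prime $\mathfrak{p}$ of $K$ above $p$, whose residue field is $\mathbb{F}_p$: since $\sigma_{-1}$ fixes $\mathfrak{p}$ and acts trivially on $\mathbb{F}_p$, one gets $\overline{Y}\equiv Y$ and so $Y^2\equiv N \pmod{\mathfrak{p}}$, and I would then exploit the multiplicative relations among the reductions of $Y$, $\overline{Y}$ and the Frobenius elements at the $q_j$ to control the order of $N$ (equivalently of $p$). The cubic shape $N^2+N+1=\Phi_3(N)$ strongly suggests that the inequality ultimately comes either from forcing a $p\mid\Phi_3(N)$-type divisibility or from a pigeonhole among the $O(N^2)$ admissible residues. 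This quantitative step is where I anticipate the main obstacle, and it is precisely the point at which I would lean on the finer field-descent and Gauss-sum estimates of Leung and Schmidt \cite{LS2019} rather than on the elementary ideal bookkeeping above.
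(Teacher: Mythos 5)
First, note that the paper does not prove this statement at all: Theorem \ref{N^2+N+1} is imported verbatim from Leung and Schmidt \cite{LS2019}, so there is no internal proof to compare against. Judged on its own merits, your proposal correctly disposes of parts (3) and (2) for odd $p$. For (3), identifying the field $K$ of Theorem \ref{Kv^2} with the fixed field of $\langle\sigma_t\rangle$ is legitimate, because ${\rm Gal}(\mathbb{Q}(\zeta_{p^a})/\mathbb{Q})\cong\mathbb{Z}_{p^a}^*$ is cyclic for odd $p$, hence has a unique subgroup of order $f$, and ${\rm ord}_{p^a}(t)=f$ makes $\langle\sigma_t\rangle$ that subgroup; Theorem \ref{Kv^2} then gives (3) immediately. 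Your part (2) argument (if $f$ were even then $\sigma_{-1}\in\langle\sigma\rangle$, so $K$ is real, $Y=X\zeta_{p^a}^i$ satisfies $Y^2=N$, and $\mathbb{Q}(\sqrt{N})$ would be a second quadratic subfield of $\mathbb{Q}(\zeta_{p^a})$ distinct from the unique one $\mathbb{Q}(\sqrt{p^*})$ since $\gcd(N,p)=1$ and $N$ is square-free) is also sound. Two small caveats: everything you do presumes $p$ odd (Theorem \ref{Kv^2} is stated only for odd $p$, and cyclicity of $\mathbb{Z}_{2^a}^*$ fails for $a\geq 3$), whereas the statement as quoted allows any prime $p$; and you implicitly need $s\geq 1$, i.e.\ $N>1$.

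The genuine gap is part (1), which is the entire quantitative content of the theorem and the part actually used elsewhere in the paper (e.g.\ in Theorem \ref{pf}). Your ideal-theoretic bookkeeping is correct as far as it goes — square-freeness and unramifiedness of the $q_j$ do force each prime of $K$ above $q_j$ to divide exactly one of $(Y)$, $(\overline{Y})$, hence none is self-conjugate, hence ${\rm ord}_{p^a}(q_j)$ is odd — but none of this produces an upper bound on $p$. The reduction modulo the totally ramified prime $\mathfrak{p}$ yields only $Y^2\equiv N\pmod{\mathfrak{p}}$, i.e.\ the congruence condition that $N$ is a quadratic residue mod $p$, which constrains $p$ to a residue class structure but bounds nothing; and the pigeonhole you gesture at among ``$O(N^2)$ admissible residues'' is never set up. Your closing sentence then defers the inequality $p\leq N^2+N+1$ to ``the finer field-descent and Gauss-sum estimates of Leung and Schmidt \cite{LS2019}'' — but that citation is precisely the theorem being proved, so the argument is circular at exactly the decisive step. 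What is missing is genuinely new input beyond Theorem \ref{Kv^2}: one must exploit the normalized element $Y\in K$ concretely (e.g.\ control the number and size of its nonzero coefficients in an integral basis of $\mathcal{O}_K$, combined with the oddness of $f$ and the factorization $(Y)(\overline{Y})=(N)$) to convert $Y\overline{Y}=N$ into an inequality between $p$ and $N$. As it stands, your proposal proves (2) and (3) but not (1).
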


By using Corollary \ref{pf1} and Theorem \ref{N^2+N+1}, we obtain the following necessary conditions for the existence of SEDFs.

\begin{theorem}\label{pf}
Suppose there exists a $(v,m, k, \lambda)$-SEDF, $\{D_1,\ldots, D_m\}$ in $G$ with $m\geq 3$. 
Let $p>2$ be a prime divisor of $v$. Let $\chi$ be a nonprincipal character of $G$ of order $p^a$ 
such that 
$\lambda$ is not divisible by $p$. 
Let $q$ be a prime divisor of $\lambda$ such that $q^b\mid \lambda$ and $q$ is self-conjugate modulo $p^a$.
Suppose $\frac{4\lambda}{ b_{\chi}^2-a_{\chi}^2}$ and $\lambda$ are nonsquare integers, where $a_{\chi}, b_{\chi}$ are as defined in Equation {\rm(\ref{AX})}. Then it holds that \\
\[
\begin{array}{l}
\vspace{0.2cm}
\left\{\begin{array}{ll}
\vspace{0.2cm}\ \ p\leq \left( \frac{\lambda}{q^{2\lfloor \frac{b+1}{2}\rfloor}}\right )^2+\frac{\lambda}{q^{2\lfloor \frac{b+1}{2}\rfloor}}+1,\ \ &  {\rm if}\ \sum\limits_{i=1}^m\chi(D_i)=0,\ \\
\vspace{0.2cm}\ \ p\leq \left(\frac{\lambda}{b_{\chi}^2-a_{\chi}^2}\right)^2+\frac{\lambda}{b_{\chi}^2-a_{\chi}^2}+1, \ \ & {\rm  if}  \ {\rm gcd}(a_{\chi}+b_{\chi},2a_{\chi})=1 \ {\rm and}\ \sum\limits_{i=1}^m\chi(D_i)\not=0, \\
\ \ p\leq \left(\frac{4\lambda}{b_{\chi}^2-a_{\chi}^2}\right)^2+\frac{4\lambda}{b_{\chi}^2-a_{\chi}^2}+1, \ \ & {\rm  if}  \ {\rm gcd}(a_{\chi}+b_{\chi},2a_{\chi})=2 \ {\rm and}\ \sum\limits_{i=1}^m\chi(D_i)\not=0.
\end{array}
\right .
\end{array}
\]
\end{theorem}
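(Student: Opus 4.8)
The plan is to read the three displayed inequalities as the three cases determined by the value of $\sum_{i=1}^m\chi(D_i)$, and in each case to manufacture an element of $\mathbb{Z}[\zeta_{p^a}]$ whose norm is exactly the integer $N$ appearing on the right-hand side, after which Theorem \ref{N^2+N+1}(1) supplies a bound of the shape $p\le N^2+N+1$. When $\sum_{i=1}^m\chi(D_i)=0$ we have $\chi(D)=0$, so Equation (\ref{Dj1}) gives $\chi(D_j)\overline{\chi(D_j)}=\lambda$; when $\sum_{i=1}^m\chi(D_i)\neq 0$, Corollary \ref{pf1} directly produces an $X\in\mathbb{Z}[\zeta_{p^a}]$ with $X\overline{X}=\frac{\lambda}{b_\chi^2-a_\chi^2}$ if $\gcd(a_\chi+b_\chi,2a_\chi)=1$, and $X\overline{X}=\frac{4\lambda}{b_\chi^2-a_\chi^2}$ if $\gcd(a_\chi+b_\chi,2a_\chi)=2$. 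Thus each line of the statement corresponds to an equation $X\overline{X}=N$ for the precise $N$ named there.

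First I would dispose of the zero case, the only one in which the listed $N$ is not already the output of Corollary \ref{pf1}. Starting from $\chi(D_j)\overline{\chi(D_j)}=\lambda$, and using that $q$ is self-conjugate modulo $p^a$ with $q^b\mid\lambda$ and $q\nmid p^a$, Lemma \ref{X1} yields $q^{\lfloor (b+1)/2\rfloor}\mid\chi(D_j)$; writing $\chi(D_j)=q^{\lfloor (b+1)/2\rfloor}Y$ gives $Y\overline{Y}=\lambda/q^{2\lfloor (b+1)/2\rfloor}=N$, a rational algebraic integer and hence a genuine positive integer by Lemma \ref{integer r}. I would also record the bookkeeping that lets the two nonsquare hypotheses suffice for all three cases: since $4$ is a perfect square, $\frac{4\lambda}{b_\chi^2-a_\chi^2}$ is a nonsquare if and only if $\frac{\lambda}{b_\chi^2-a_\chi^2}$ is, so the single assumption that $\frac{4\lambda}{b_\chi^2-a_\chi^2}$ is nonsquare covers both nonzero cases; similarly, $N=\lambda/q^{2\lfloor (b+1)/2\rfloor}$ differs from $\lambda$ by a perfect square, so the assumption that $\lambda$ is nonsquare covers the zero case.

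It then remains to verify the hypotheses of Theorem \ref{N^2+N+1}: $N$ is prime to $p$ because $p$ is odd and $p\nmid\lambda$, and $N$ is not a perfect square by the previous paragraph. To meet the square-free requirement I would pass to the square-free part $N_0\mid N$, which satisfies $N_0>1$ precisely because $N$ is a nonsquare, descend $X$ to some $Z\in\mathbb{Z}[\zeta_{p^a}]$ with $Z\overline{Z}=N_0$ by stripping the square factors via Lemma \ref{X1}, and apply Theorem \ref{N^2+N+1}(1) to obtain $p\le N_0^2+N_0+1$. Since $t\mapsto t^2+t+1$ is increasing on the positive reals and $N_0\le N$, this upgrades to $p\le N^2+N+1$, which is exactly the claimed inequality in each of the three cases. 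The step I expect to be the main obstacle is this descent to the square-free part: Lemma \ref{X1} only peels off squares at primes that are self-conjugate modulo $p^a$, whereas the hypotheses guarantee self-conjugacy only for $q$. In the zero case the square factor being removed is a power of $q$ by construction, so the descent is clean; the care needed is to organize the general argument so that the passage to $N_0$ together with the monotonicity bound restores the stated, possibly non-square-free, value of $N$ on the right-hand side.
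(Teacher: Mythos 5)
Your proposal follows the paper's route almost verbatim in its main body: the same three-way case split, with the case $\chi(D)=0$ handled by Equation (\ref{Dj1}) plus Lemma \ref{X1} (giving $q^{\lfloor (b+1)/2\rfloor}\mid \chi(D_j)$ and hence $Y\overline{Y}=\lambda/q^{2\lfloor (b+1)/2\rfloor}$), the two nonzero cases supplied directly by Corollary \ref{pf1}, and the prime bound read off from Theorem \ref{N^2+N+1}(1); your observation that nonsquareness of $\lambda$ and of $4\lambda/(b_\chi^2-a_\chi^2)$ propagates to all three values of $N$ (division by the perfect squares $q^{2\lfloor (b+1)/2\rfloor}$ and $4$) is also exactly the paper's implicit bookkeeping.

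The step you add --- descending from $X\overline{X}=N$ to $Z\overline{Z}=N_0$ for the square-free part $N_0$ of $N$ --- is a genuine gap, precisely at the point you flagged. Lemma \ref{X1} strips a factor $r^c$ from $X$ only when the prime $r$ is self-conjugate modulo $p^a$, and the hypotheses give self-conjugacy only for $q$; for a non-self-conjugate prime $r$ with $r^2\mid N$, the implication ``$X\overline{X}=N \Rightarrow r\mid X$'' is simply false. For instance, if $r\equiv 1\pmod{p^a}$ then $r$ splits completely in $\mathbb{Z}[\zeta_{p^a}]$; taking $p^a=5$ (where $\mathbb{Z}[\zeta_5]$ is a PID) and $r=11=\pi_1\overline{\pi}_1\pi_2\overline{\pi}_2$, the element $X=\pi_1^2\pi_2^2$, adjusted by a unit, satisfies $X\overline{X}=121$ while $11\nmid X$. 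So no $Z$ with $Z\overline{Z}=N_0$ need exist, and the monotonicity upgrade $p\le N_0^2+N_0+1\le N^2+N+1$ has nothing to stand on. Note also that, contrary to your closing remark, the zero case is \emph{not} clean: $\lambda/q^{2\lfloor (b+1)/2\rfloor}$ may still carry square factors at primes other than $q$ (nonsquare $\lambda$ does not mean $\lambda$ is $q^b$ times a square-free number), so the defect afflicts all three cases. For comparison, the paper's own proof performs no descent at all: it applies Theorem \ref{N^2+N+1} directly to the merely nonsquare integers $\lambda/q^{2\lfloor (b+1)/2\rfloor}$, $\lambda/(b_\chi^2-a_\chi^2)$ and $4\lambda/(b_\chi^2-a_\chi^2)$, in effect reading the cited Leung--Schmidt result with ``nonsquare'' in place of the quoted ``square-free''. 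You correctly detected this mismatch between the theorem as stated and the way it must be used, but the repair has to come from the form of the Leung--Schmidt result itself, not from a square-free descent, which the paper's tools cannot deliver.
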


\begin{proof}
Set $D=\bigcup\limits_{i=1}^m D_i$. The nonprincipal character $\chi$ falls into one of the following two cases: 

{\bf Case (i)} $\chi(D)=0$.

By Equation (\ref{Dj1}), it holds that
$$\chi(D_i)\overline{\chi(D_i)}=\lambda.$$
Since $q^b\mid \lambda$, we obtain
$$\chi(D_i)\overline{\chi(D_i)}\equiv 0\pmod {q^b}.$$
Since $p\nmid \lambda$ and $q$ is self-conjugate modulo $p^a$, by Lemma \ref{X1} we have
$$\chi(D_i) \equiv 0\pmod {q^{\lfloor \frac{b+1}{2}\rfloor}}.$$
Thus, there exists an $X_i\in \mathbb{Z}[\zeta_{p^a}]$ such that
$$\chi(D_i)=q^{\lfloor \frac{b+1}{2}\rfloor} X_i, \ \ {\rm and}\ X_i\overline{X_i}=\frac{\lambda}{q^{2\lfloor \frac{b+1}{2}\rfloor}}.$$
Since $\lambda$ is a nonsquare integer, we have $\frac{\lambda}{ q^{2\lfloor \frac{b+1}{2}\rfloor} }$ is a nonsquare integer. By Theorem \ref{N^2+N+1}, we obtain $p\leq \big{(}\frac{\lambda}{q^{2\lfloor \frac{b+1}{2}\rfloor}}\big{)}^2+\frac{\lambda}{q^{2\lfloor \frac{b+1}{2}\rfloor}}+1$.%

{\bf Case (ii)} $\chi(D)\neq 0$.

By Corollary \ref{pf1}, it holds that
\[
\begin{array}{l}
\left\{\begin{array}{ll}
\vspace{0.2cm}\chi(D)=2a_{\chi} X\ {\rm for\ some}\ X\in \mathbb{Z}[\zeta_{p^a}]\ {\rm such\ that}\ X\overline{X}=\frac{\lambda}{b_{\chi}^2-a_{\chi}^2}, \ \ & {\rm  if}  \ {\rm gcd}(a_{\chi}+b_{\chi},2a_{\chi})=1,\  \\
\chi(D)=a_{\chi} X\ {\rm for\ some}\ X\in \mathbb{Z}[\zeta_{p^a}]\ {\rm such\ that}\ X\overline{X}=\frac{4\lambda}{b_{\chi}^2-a_{\chi}^2},  \ \ & {\rm  if}  \ {\rm gcd}(a_{\chi}+b_{\chi},2a_{\chi})=2.
\end{array}
\right .
\end{array}
\]
Since $\frac{4\lambda}{ b_{\chi}^2-a_{\chi}^2}$ is a nonsquare integer, by Theorem \ref{N^2+N+1} we obtain
\[
\begin{array}{l}
\vspace{0.2cm}
\left\{\begin{array}{ll}
\vspace{0.2cm}\ \ p\leq \left(\frac{\lambda}{b_{\chi}^2-a_{\chi}^2}\right)^2+\frac{\lambda}{b_{\chi}^2-a_{\chi}^2}+1, \ \ & {\rm  if}  \ {\rm gcd}(a_{\chi}+b_{\chi},2a_{\chi})=1, \\
\ \ p\leq \left(\frac{4\lambda}{b_{\chi}^2-a_{\chi}^2}\right)^2+\frac{4\lambda}{b_{\chi}^2-a_{\chi}^2}+1, \ \ & {\rm  if}  \ {\rm gcd}(a_{\chi}+b_{\chi},2a_{\chi})=2.
\end{array}
\right .
\end{array}
\]
This completes the proof. \qed
\end{proof}

We now illustrate the use of Theorem \ref{pf} to rule out the existence of a $(6976,218,30,28)$-SEDF.

\begin{example}
There does not exist a $(6976,218,30,28)$-SEDF in any abelian group $G$.
\end{example}
\begin{proof}
Suppose, for a contradiction, that $\{D_1,\ldots, D_{218}\}$  is a $(6976,218,30,28)$-SEDF in $G$. Let $\chi$ be a character of $G$ of order $109$ and set $D=\bigcup\limits_{i=1}^{218} D_i$.

If $\chi\big{(}D\big{)}=0$, by Equation (\ref{Dj1}) we have $\chi\big{(}D_i\big{)}\overline{\chi\big{(}D_i\big{)}}=28.$ Since $2^{18}\equiv -1 \pmod{109}$, by Theorem \ref{pf} with $q=2$, $b=2$, $p=109$ and $\lambda=28$, we have
$$p\leq \left(\frac{\lambda}{q^{2\lfloor \frac{b+1}{2}\rfloor}}\right)^2+\frac{\lambda}{q^{2\lfloor \frac{b+1}{2}\rfloor}}+1=57,$$
 a contradiction. Then we have $\chi\big{(}D\big{)}\neq 0$. From the proof of Corollary \ref{k(m-2)}, we have $109\mid 28\times 28$, again a contradiction. \qed
\end{proof}

Let $\chi$ be a nonprincipal character of $G$ of order $p^a$ such that $\sum_{i=1}^m\chi(D_i)\neq0$. Then by using Corollary \ref{pf1}, Theorems \ref{Kv^2} and \ref{N^2+N+1}, we obtain the following necessary conditions for the existence of SEDFs. We omit the proof here.

\begin{theorem}\label{pfK1}
Suppose there exists a $(v, m, k, \lambda)$-SEDF, $\{D_1,\ldots, D_m\}$ in $G$ with $m\geq 3$. Let $p>2$ be a prime divisor of $v$. Let $\chi$ be a nonprincipal character of $G$ of order $p^a$ such that $\sum\limits_{i=1}^m\chi(D_i)\neq0$ and $\frac{4\lambda}{b_{\chi}^2-a_{\chi}^2}$ is not divisible by $p$, where $a_{\chi}, b_{\chi}$ are as defined in Equation {\rm(\ref{AX})}. Let $q_1,\ldots,q_s$ be the distinct prime divisors of $\frac{4\lambda}{b_{\chi}^2-a_{\chi}^2}$ and set
$$f=\gcd\big{(}\text{ord}_p(q_1),\ldots, \text{ord}_p(q_s)\big{)}.$$
Let $K$ be the subfield of $\mathbb{Q}(\zeta_{p^a})$ with $[\mathbb{Q}(\zeta_{p^a}):K]=f$. Then there exists a $j$ such that
$$\chi(D_i)\zeta_{p^a}^j\in K\ {\rm and}\  \chi(D_i)\overline{\chi(D_i)}=\frac{(a_{\chi}+b_{\chi})\lambda}{b_{\chi}-a_{\chi}}\ {\rm or}\ \frac{(b_{\chi}-a_{\chi})\lambda}{b_{\chi}+a_{\chi}}\ {\rm for\ all}\ i,\ 1\leq i\leq m. $$
\end{theorem}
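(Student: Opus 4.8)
The plan is to combine Corollary~\ref{pf1} with the structural results of Theorems~\ref{Kv^2} and \ref{N^2+N+1}, exactly as signalled in the sentence preceding the statement. First I would set $D=\bigcup_{i=1}^m D_i$ and invoke Corollary~\ref{pf1} under the hypothesis $\sum_{i=1}^m\chi(D_i)=\chi(D)\neq 0$. This yields, according to whether $\gcd(a_\chi+b_\chi,2a_\chi)=1$ or $2$, an element $X\in\mathbb{Z}[\zeta_{p^a}]$ with $\chi(D)=2a_\chi X$ or $\chi(D)=a_\chi X$ respectively, and in both cases $X\overline{X}=\frac{4\lambda}{b_\chi^2-a_\chi^2}$ (after clearing the factor $a_\chi^2$ from the value $\chi(D)\overline{\chi(D)}=\frac{4a_\chi^2\lambda}{b_\chi^2-a_\chi^2}$ recorded in Table~I). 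Setting $N=\frac{4\lambda}{b_\chi^2-a_\chi^2}$, the hypothesis that $p\nmid N$ is precisely what licenses the application of the Leung--Schmidt machinery with modulus $p^a$.

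Next I would apply Theorem~\ref{Kv^2} directly to $X$ with this $N$. With $q_1,\ldots,q_s$ the distinct prime divisors of $N$ and $f=\gcd(\mathrm{ord}_p(q_1),\ldots,\mathrm{ord}_p(q_s))$, Theorem~\ref{Kv^2} produces an integer $i$ such that $X\zeta_{p^a}^i\in K$, where $K$ is the subfield of $\mathbb{Q}(\zeta_{p^a})$ of index $f$. Since $\chi(D)$ is a rational (in fact integer) multiple of $X$ by Corollary~\ref{pf1}, the same cyclotomic shift places $\chi(D)\zeta_{p^a}^i\in K$. The remaining task is to transfer this conclusion from the aggregate $\chi(D)$ to each individual $\chi(D_i)$: by Equation~(\ref{Dj2}) each $\chi(D_i)=\alpha_i\chi(D)$ with $\alpha_i\in\mathbb{Q}$ (this rationality is guaranteed by Lemma~\ref{integer1}), so $\chi(D_i)\zeta_{p^a}^j=\alpha_i\,\chi(D)\zeta_{p^a}^j\in K$ for the same exponent $j=i$, since $K$ is a $\mathbb{Q}$-vector space. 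Finally, the value $\chi(D_i)\overline{\chi(D_i)}$ is read off directly from Table~I as $\frac{(b_\chi+a_\chi)\lambda}{b_\chi-a_\chi}$ when $\chi\in\widehat{G}^+$ and $\frac{(b_\chi-a_\chi)\lambda}{b_\chi+a_\chi}$ when $\chi\in\widehat{G}^-$, which gives the two listed possibilities.

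The step I expect to require the most care is the bookkeeping of the common cyclotomic shift: Theorem~\ref{Kv^2} only asserts the existence of \emph{some} $i$ for which $X\zeta_{p^a}^i\in K$, and I must verify that this single exponent works simultaneously for every $\chi(D_i)$. This is not automatic from a naive reading, but it follows because all the $\chi(D_i)$ are $\mathbb{Q}$-multiples of the one element $\chi(D)$, so a single shift that lands $\chi(D)$ in $K$ lands all of them in $K$; the role of Lemma~\ref{integer1} (ensuring $\alpha_i\in\mathbb{Q}$ when $m\geq 3$) is essential here and must be cited explicitly. A minor subtlety is confirming that the distinct prime divisors of $N=\frac{4\lambda}{b_\chi^2-a_\chi^2}$ are genuinely coprime to $p$, which is exactly the stated hypothesis, so no further argument is needed. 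Since the two boxed cases in the statement correspond only to which scalar ($2a_\chi$ or $a_\chi$) appears in Corollary~\ref{pf1} and do not affect the field-membership conclusion, the proof proceeds uniformly in both, which is why the paper elects to omit the routine details.
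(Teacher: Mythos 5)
Your proposal follows essentially the same route the paper signals for its (omitted) proof: derive the theorem from Corollary~\ref{pf1} combined with Theorems~\ref{Kv^2} and~\ref{N^2+N+1}, then transfer the field membership from $\chi(D)$ to each $\chi(D_i)$ via the rational scalars $\alpha_i$ (Lemma~\ref{integer1}), reading the norm values off Table~I. One small slip: in the case $\gcd(a_\chi+b_\chi,2a_\chi)=1$, Corollary~\ref{pf1} gives $X\overline{X}=\frac{\lambda}{b_\chi^2-a_\chi^2}$, not $\frac{4\lambda}{b_\chi^2-a_\chi^2}$; this is harmless, because the prime divisors of $\frac{\lambda}{b_\chi^2-a_\chi^2}$ form a subset of $\{q_1,\ldots,q_s\}$, so the value of $f$ produced by Theorem~\ref{Kv^2} there is a multiple of the $f$ in the statement and the corresponding fixed field is contained in $K$, leaving the conclusion intact.
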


\begin{lemma}{\rm \cite{IR1992}}\label{GS}
Let $p$ be a prime. Let $\eta$ be the quadratic multiplicative character of $\mathbb{Z}_p$ and $\varphi$ an additive character of $\mathbb{Z}_p$. Then the value of the {\bf quadratic Gauss sum} $G(\eta,\varphi)$ is given by
\[
\begin{array}{l}
G(\eta,\varphi)=\sum\limits_{i=1}^{p-1}\eta(i)\varphi(i)=
\left\{\begin{array}{ll}
\vspace{0.1cm} \sqrt{-p} , \ \ & {\rm  if}  \ p\equiv 3\pmod 4,\  \\
\sqrt{p}, \ \ & {\rm if } \ p\equiv 1\pmod 4.
\end{array}
\right .
\end{array}
\]
\end{lemma}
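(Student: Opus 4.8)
The plan is to prove the lemma in two stages: first pin down $G(\eta,\varphi)$ up to a sign by a short character-theoretic computation, and then resolve that sign, which is the genuinely hard classical part (Gauss's sign theorem). At the outset I would note that the stated values correspond to the \emph{principal} additive character $\varphi(i)=\zeta_p^i$; a general additive character $\varphi_a(i)=\zeta_p^{ai}$ only rescales the sum, since the substitution $j=ai$ gives $G(\eta,\varphi_a)=\eta(a)G(\eta,\varphi)$ with $\eta(a)\in\{\pm 1\}$, so it suffices to treat the principal case.

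\textbf{Stage 1 (value up to sign).} First I would use that $\eta$ is real-valued, so $\overline{\eta(j)}=\eta(j)=\eta(j^{-1})$ for $j\neq 0$; conjugating the defining sum and substituting $j\mapsto -j$ then yields $\overline{G}=\eta(-1)G$. Next I would compute the magnitude by orthogonality: after the substitution $i=jt$ one writes $G\overline G=\sum_{t}\eta(t)\sum_{j\neq 0}\zeta_p^{j(t-1)}$, where the inner sum equals $p-1$ for $t=1$ and $-1$ otherwise, and using $\sum_{t\neq 0}\eta(t)=0$ this collapses to $G\overline G=p$. Combining the two identities gives $\eta(-1)G^2=p$, hence $G^2=\eta(-1)\,p$. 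By Euler's criterion $\eta(-1)=(-1)^{(p-1)/2}$, so $G^2=p$ when $p\equiv 1\pmod 4$ and $G^2=-p$ when $p\equiv 3\pmod 4$. This forces $G\in\{\pm\sqrt p\}$ in the first case and $G\in\{\pm\sqrt{-p}\}$ in the second, matching the shape of the claimed answer but leaving the overall sign open.

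\textbf{Stage 2 (the sign).} The crux is showing that the sign is $+$ in both cases. I would first reformulate the Gauss sum as a quadratic theta sum: since $j^2$ hits $0$ once and each nonzero quadratic residue twice as $j$ ranges over $\mathbb{Z}_p$, one verifies $\sum_{j=0}^{p-1}\zeta_p^{j^2}=G(\eta,\varphi)$, which opens the door to a global argument. My preferred route is the Schur matrix method: form the Fourier matrix $F=(\zeta_p^{jk})_{0\le j,k\le p-1}$, whose trace is exactly $\sum_j\zeta_p^{j^2}=G$. A direct computation gives $F^2=p\,\Pi$, where $\Pi$ is the permutation matrix of $j\mapsto -j$, so $F^4=p^2 I$ and every eigenvalue of $F$ lies in $\{\sqrt p,-\sqrt p,i\sqrt p,-i\sqrt p\}$. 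I would then determine the four eigenvalue multiplicities by combining the multiplicity split read off from $F^2=p\,\Pi$ (eigenvalue $p$ with multiplicity $(p+1)/2$, eigenvalue $-p$ with multiplicity $(p-1)/2$, via the fixed-point structure of $j\mapsto -j$), the sign-ambiguous value $\operatorname{tr}F=G$ from Stage 1, and $\det F$ evaluated as a Vandermonde product; matching these pins down each multiplicity and hence fixes $\operatorname{tr}F$ with its correct sign. Alternatively I would invoke the analytic argument via Poisson summation or the Jacobi theta transformation applied to a Gaussian sum, which produces the sign directly at the cost of importing more machinery.

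\textbf{Main obstacle.} Everything in Stage 1 is routine orthogonality together with Euler's criterion. The real difficulty is Stage 2: the local character identities are blind to the global sign, and determining it requires either delicate eigenvalue-multiplicity bookkeeping for $F$ (the subtle point being the evaluation of $\det F$ and matching its phase against the ambiguous trace) or an analytic transformation formula. This is precisely the step that resisted Gauss for years, so in a final write-up I would either reproduce the Schur argument in full or simply cite the sign determination from the standard reference, as the paper does.
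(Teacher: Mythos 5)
Your proposal is mathematically sound, but there is an important point of comparison to make first: the paper does not prove this lemma at all. It is imported verbatim, with citation, from Ireland--Rosen \cite{IR1992}, so there is no internal proof to measure your attempt against; the only question is whether your argument is a valid proof of the quoted classical fact. It is. Stage 1 ($\overline{G}=\eta(-1)G$ by conjugating and substituting $j\mapsto -j$; $G\overline{G}=p$ by the orthogonality computation with $i=jt$; hence $G^2=(-1)^{(p-1)/2}p$ by Euler's criterion) is the standard easy half, and it is correct as written. Stage 2 via Schur's matrix is likewise a correct and complete route to the sign: $\operatorname{tr}F=\sum_j\zeta_p^{j^2}=G$ (your residue-counting identity $1+2\sum_{r\in QR}\zeta_p^r=G$ checks out), $F^2=p\,\Pi$ forces the eigenvalues into $\{\pm\sqrt{p},\pm i\sqrt{p}\}$ since $x^4-p^2$ has distinct roots, the involution $\Pi$ with its single fixed point $j=0$ gives the multiplicity splits $a+b=(p+1)/2$ and $c+d=(p-1)/2$, and the Vandermonde evaluation of $\det F$ closes the linear system for $(a,b,c,d)$ and hence fixes $\operatorname{tr}F$. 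This differs from the sign determination actually carried out in the cited reference, but it buys the same conclusion using only linear algebra, and your assessment of where the difficulty lives (the phase of $\det F=\prod_{j<k}(\zeta_p^k-\zeta_p^j)$, which must be computed in full in a final write-up) is accurate.

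Two caveats. First, a terminology slip: $\varphi(i)=\zeta_p^i$ is the canonical nontrivial additive character, not the \emph{principal} one — the principal additive character is trivial, and for it the sum is $0$. Second, your opening reduction $G(\eta,\varphi_a)=\eta(a)G(\eta,\varphi_1)$ is not merely a normalization: it shows the lemma as stated in the paper is literally correct only when $\varphi=\varphi_a$ with $a$ a quadratic residue (for $a$ a nonresidue the value is $-\sqrt{\pm p}$, and for trivial $\varphi$ it is $0$), so your remark is in fact a correct sharpening of the statement. This imprecision is harmless in the paper's application (Theorem \ref{pfK}), where the ambient integers $a,b$ absorb the sign, but it is right to flag it.
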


By using quadratic Gauss sums and Theorem \ref{N^2+N+1}, we obtain the following necessary conditions for the existence of SEDFs.


\begin{theorem}\label{pfK}
Let $p$ be an odd prime and let $N$ be a positive integer with $\gcd(N,p)=1$. Let $H=\langle g\rangle$ be a cyclic group of order $p$ and let $\chi$ be a character of $H$ such that $\chi(g)=\zeta_p$. Suppose there exists an element $D=\sum_{i=0}^{p-1}a_ig^i\in \mathbb{Z}[H]$ with $a_i\geq 0$ such that $\chi(D)\overline{\chi(D)}=N$ and $\sum_{i=0}^{p-1}a_i=k$. Let $q_1,\ldots,q_s$ be distinct prime divisors of $N$ and set
$$f=\gcd\left(\text{ord}_p(q_1),\ldots,\text{ord}_p(q_s)\right).$$
If $\frac{p-1}{2}\mid f$, then one of the following holds.\\
\[
\begin{array}{l}
\vspace{0.2cm} {\rm (1)}\ \  k\equiv t \pmod p,\ k\geq t(1-p)\ {\rm and}\ N=t^2\ {\rm for\ some\ integer}\ t. \\
\vspace{0.2cm} {\rm (2)}\
\left\{\begin{array}{ll}
\vspace{0.2cm}  k\equiv a\pmod p \ {\rm and}\ k\geq \max\{a+pb, a-pb, a(1-p)\},  & {\rm  if}  \ p\equiv 3\pmod 4,  N=a^2+pb^2,  \\
\vspace{0.2cm} k\equiv a \pmod p \ {\rm and}\ k\geq \max\left\{a, a(1-p)\right\}, & {\rm if } \ p\equiv 1\pmod 4,   N=a^2, \\
k\equiv 0 \pmod p \ {\rm and}\ k\geq \max\left\{ap, \ -ap\right\}, & {\rm if } \ p\equiv 1\pmod 4,  N=a^2p,
\end{array}
\right .
\end{array}
\]
where $a,b$ are integers.
\end{theorem}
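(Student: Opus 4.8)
The plan is to work with $X:=\chi(D)=\sum_{i=0}^{p-1}a_i\zeta_p^i\in\mathbb{Z}[\zeta_p]$, which satisfies $X\overline{X}=N$, and to first locate $X$ up to a power of $\zeta_p$ inside a small subfield, then to read off the arithmetic of $N$ and $k$ from the explicit shape of $X$ together with the non-negativity of the $a_i$. Since $\gcd(N,p)=1$, each $\mathrm{ord}_p(q_j)$ divides $p-1$, so $f\mid(p-1)$; combined with the hypothesis $\frac{p-1}{2}\mid f$ this forces $f=\frac{p-1}{2}$ or $f=p-1$. Applying Theorem \ref{Kv^2} with $a=1$ to $X\overline{X}=N$ produces an integer $e$ with $Y:=X\zeta_p^{e}\in K$, where $K$ is the subfield of $\mathbb{Q}(\zeta_p)$ with $[\mathbb{Q}(\zeta_p):K]=f$, so $[K:\mathbb{Q}]=(p-1)/f\in\{1,2\}$. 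Here $Y\in\mathbb{Z}[\zeta_p]\cap K$ and $Y\overline{Y}=X\overline{X}=N$, so the whole problem reduces to understanding the quadratic integer (or rational integer) $Y$.

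In the case $f=p-1$ we have $K=\mathbb{Q}$, hence $Y=t\in\mathbb{Z}$ and $N=Y\overline{Y}=t^2$. Writing $X=t\zeta_p^{-e}=\sum_i a_i\zeta_p^i$ and invoking Lemma \ref{IB} (with $b=1$, where $i\equiv j\pmod{1}$ holds for all $i,j$), the coefficients must all be equal except at the single index $-e$, where they differ by $t$. Non-negativity of the $a_i$, namely $c\ge\max(0,-t)$ for the common value $c$, together with $\sum_i a_i=k=pc+t$, then yields $k\equiv t\pmod p$ and $k\ge t(1-p)$. This is exactly conclusion (1).

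When $f=\frac{p-1}{2}$, $K$ is the unique quadratic subfield of $\mathbb{Q}(\zeta_p)$. By Lemma \ref{GS} the quadratic Gauss sum $\tau=\sum_{i=1}^{p-1}\eta(i)\zeta_p^i$ satisfies $\tau^2=(-1)^{(p-1)/2}p$ and generates $K$, so $K=\mathbb{Q}(\sqrt p)$ for $p\equiv1\pmod4$ and $K=\mathbb{Q}(\sqrt{-p})$ for $p\equiv3\pmod4$. I would split on $p\bmod 4$. For $p\equiv3\pmod4$, $-1$ is a non-residue, so complex conjugation is the nontrivial automorphism of $K$ and $N=Y\overline{Y}$ is the field norm of $Y$; expressing $Y$ through $\tau$ gives $N=a^2+pb^2$. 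For $p\equiv1\pmod4$, $K$ is real and complex conjugation acts trivially on $K$, so $Y\overline{Y}=Y^2=N$ forces $Y\in\mathbb{Z}$ or $Y\in\mathbb{Z}\sqrt p$, yielding $N=a^2$ or $N=a^2p$ respectively. In each sub-case I would then substitute $X=Y\zeta_p^{-e}$, write $Y=r+s\eta_0$ in terms of the Gaussian period $\eta_0=\sum_{l\in R}\zeta_p^l$ over the quadratic residues $R$, and expand in the power basis $\{\zeta_p^j\}$. The resulting coefficients take only the values $r$, $s$, $0$ (with multiplicities $1$, $\frac{p-1}{2}$, $\frac{p-1}{2}$); Lemma \ref{IB} shows the genuine non-negative coefficients $a_j$ differ from these by one common constant $c$. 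Summing over $j$ expresses $k$ as an explicit integer-linear combination of $r,s,c$, from which the stated congruence for $k\bmod p$ is immediate, and each inequality $k\ge\max\{\dots\}$ arises from requiring a shifted coefficient $a_j=(\text{value})+c\ge0$, one bound per possible minimal coefficient value.

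The hard part will be the bookkeeping in the quadratic case: expanding $Y\zeta_p^{-e}$ over the residue/non-residue split, matching it to the non-negative coefficient vector via Lemma \ref{IB}, and tracking parities so that the norm $N$ lands in the claimed integral form $a^2+pb^2$ (resp.\ $a^2$, $a^2p$) while simultaneously producing the congruence and the family of lower bounds on $k$. The genuinely delicate point is that $\{1,\tau\}$ spans $K$ over $\mathbb{Q}$ but is \emph{not} an integral basis of $\mathcal{O}_K$ when $p\equiv3\pmod4$ (there $\mathcal{O}_K=\mathbb{Z}[\frac{1+\sqrt{-p}}{2}]$), so the half-integer coefficients that can occur when $s$ is odd must be controlled before one may assert $N=a^2+pb^2$ with integral $a,b$; reconciling the Gaussian-period representation of $Y$ with this integral form is where the argument requires the most care.
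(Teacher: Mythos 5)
Your route coincides with the paper's own proof almost step for step: apply Theorem \ref{Kv^2} (with $a=1$) to place $Y=\chi(D)\zeta_p^{e}$ in the subfield $K$ with $[\mathbb{Q}(\zeta_p):K]=f$, split into $f=p-1$ and $f=\frac{p-1}{2}$, handle the rational case by Lemma \ref{IB} (your coefficient count $k=pc+t$, $c\ge\max(0,-t)$ is exactly the paper's Case (1)), and in the quadratic case identify $K$ through the Gauss sum of Lemma \ref{GS} and match coefficients via Lemmas \ref{KN} and \ref{IB}. The $f=p-1$ case and the $p\equiv 1\pmod 4$ subcase do close as you describe: for $p\equiv 1\pmod 4$ complex conjugation fixes $K$, and a half-integral $Y=\frac{a'+b'\sqrt p}{2}$ with $a',b'$ odd cannot have $Y^2\in\mathbb{Z}$, so $Y\in\mathbb{Z}$ or $Y\in\mathbb{Z}\sqrt p$ as claimed.

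However, the ``delicate point'' you flag for $p\equiv 3\pmod 4$ and then leave open is a genuine gap, and it cannot be closed -- it is precisely where the paper's own proof goes wrong. The paper asserts that the ring of integers of $K=\mathbb{Q}(\sqrt{-p})$ is $\mathbb{Z}+\mathbb{Z}\sqrt{-p}$, but since $-p\equiv 1\pmod 4$ it is $\mathbb{Z}\bigl[\frac{1+\sqrt{-p}}{2}\bigr]$. Carrying out your own plan in the period basis, $Y=r+s\eta_0$ with $\eta_0=\frac{-1+\sqrt{-p}}{2}$ and $r,s\in\mathbb{Z}$ (this genuinely is an integral basis), gives $N=Y\overline{Y}=r^2-rs+\frac{1+p}{4}s^2$, i.e. $4N=(2r-s)^2+ps^2$, which for odd $s$ need not have the form $a^2+pb^2$. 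Concretely, take $p=7$ and $D=g+g^2+g^4$ (the quadratic residues, i.e. the Fano difference set): then $\chi(D)=\zeta_7+\zeta_7^2+\zeta_7^4=\frac{-1+\sqrt{-7}}{2}$, so $N=\chi(D)\overline{\chi(D)}=2$, $k=3$, $q_1=2$, $f=\text{ord}_7(2)=3=\frac{p-1}{2}$; all hypotheses of the theorem hold, yet $2$ is neither a perfect square nor of the form $a^2+7b^2$. So the $p\equiv 3\pmod 4$ branch of the statement is false as written, and no amount of care can turn your outline into a proof of it. What your period-basis computation actually delivers is the corrected conclusion $4N=a^2+pb^2$ with $a\equiv b\pmod 2$, together with correspondingly modified congruences and lower bounds on $k$ read off from the coefficient pattern of $(r+s\eta_0)\zeta_p^{-e}$; your instinct that this step ``requires the most care'' was exactly right, but the outcome of that care is a weaker statement, not the one posed.
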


\begin{proof}
Since $D=\sum_{i=0}^{p-1}a_ig^i\in \mathbb{Z}[H]$ satisfies $\chi(D)\overline{\chi(D)}=N$, by Theorems \ref{Kv^2} and \ref{N^2+N+1}, we have
$\chi(D)\zeta_{p}^c\in K$ for some integer $c$. Since $\frac{p-1}{2}\mid f$, we have $f=\frac{p-1}{2}$ or $p-1$. We distinguish two cases.

{\bf Case (1)} $f=p-1$.

Clearly, $K=\mathbb{Q}$. Since $\chi(D)\zeta_{p}^c$ is an algebraic integer, we have $\chi(D)\zeta_{p}^c\in \mathbb{Z}$. Set $t=\chi(D)\zeta_{p}^c$, then it follows that
$$N=\chi(D)\overline{\chi(D)}=t^2\ {\rm and}\ \chi(D)=t\zeta_{p}^{-c}.$$
Then we have
\[
\begin{array}{l}
\chi\big{(}D-tg^{p-c}\big{)}=\chi(D)-t\zeta_p^{-c}=0.
\end{array}
\]
By Lemma \ref{KN}, there exists an element $Y \in \mathbb{Z}[H]$ with nonnegative coefficients such that
\[
\begin{array}{l}
D-tg^{p-c}=\sum\limits_{j=0}^{p-1}a_jg^j-tg^{p-c}= YH.
\end{array}
\]
By Lemma \ref{IB}, we have $a_{p-c}-t=a_j$ for each $j$ with $j\neq p-c$. Since $\sum_{j=0}^{p-1}a_j=k$ and $a_j\geq 0$, we have
$$k=\sum\limits_{j=0}^{p-1}a_j=pa_{p-c}+t(1-p).$$
Hence, we have $p\mid (k-t)$ and $k\geq t(1-p).$

{\bf Case (2)} $f=\frac{p-1}{2}$.

Then  $[K:\mathbb{Q}]=2$ and $K=\mathbb{Q}(\sqrt{d})$ for some integer $d$. Since $K$ is a quadratic number field,  the rings of algebraic integers in $K$ is
\[
\begin{array}{l}
\left\{\begin{array}{ll}
\vspace{0.1cm} \mathbb{Z}+ \mathbb{Z}\sqrt{-p} , \ \ & {\rm  if}  \ p\equiv 3\pmod 4,\  \\
\mathbb{Z}+ \mathbb{Z}\big{(}\frac{-1+\sqrt{p}}{2}\big{)}, \ \ & {\rm if } \ p\equiv 1\pmod 4.
\end{array}
\right .
\end{array}
\]
Since $\chi(D)\zeta_{p}^c$ is an algebraic integer, there exist two integers $a,b$ such that
\[
\begin{array}{l}
\chi(D)\zeta_{p}^c=
\left\{\begin{array}{ll}
\vspace{0.1cm} a+ b\sqrt{-p} , \ \ & {\rm  if}  \ p\equiv 3\pmod 4,\  \\
a+b\frac{-1+\sqrt{p}}{2}, \ \ & {\rm if } \ p\equiv 1\pmod 4.
\end{array}
\right .
\end{array}
\]
Then we have 
\[
\begin{array}{l}
N=\chi(D)\zeta_{p}^c\overline{\chi(D)\zeta_{p}^c}=
\left\{\begin{array}{ll}
\vspace{0.1cm} a^2+ pb^2 ,  & {\rm  if}  \ p\equiv 3\pmod 4,\  \\
\frac{(2a-b)^2+pb^2+2(2a-b)b\sqrt{p}}{4},  & {\rm if } \ p\equiv 1\pmod 4.
\end{array}
\right .
\end{array}
\]
Since $N$ is a positive integer, we have $(2a-b)b=0$ for $p\equiv 1\pmod 4$. Then the algebraic integer $\chi(D)\zeta_{p}^c$ can be expressed as
\begin{equation}\label{QFN}
\chi(D)\zeta_{p}^c=
\left\{\begin{array}{ll}
\vspace{0.1cm} a+ b\sqrt{-p},  & {\rm  if}  \ p\equiv 3\pmod 4, N=a^2+pb^2,  \\
\vspace{0.1cm}a,  & {\rm if } \ p\equiv 1\pmod 4, N=a^2,\\
a\sqrt{p},  & {\rm if } \ p\equiv 1\pmod 4, N=a^2p.
\end{array}
\right .
\end{equation}
By Equation (\ref{QFN}) and Lemma \ref{GS}, we have
{\small
\[
\chi(D)=
\left\{\begin{array}{ll}
\vspace{0.1cm} \big{(}a+ b\sqrt{-p}\big{)}\zeta_{p}^{-c}=\left(a+b\sum\limits_{j=1}^{p-1}\eta(j)\zeta_p^j \right)\zeta_{p}^{-c}=a\zeta_{p}^{-c}+b\sum\limits_{j=0}^{p-1}\zeta_p^{(j^2-c)} , & {\rm  if}  \ p\equiv 3\pmod 4, N=a^2+pb^2,  \\
\vspace{0.1cm}a\zeta_{p}^{-c}, & {\rm if } \ p\equiv 1\pmod 4, N=a^2, \\
a\sqrt{p}\zeta_{p}^{-c}=\left(a\sum\limits_{j=1}^{p-1}\eta(j)\zeta_p^j\right)\zeta_{p}^{-c}
=a\sum\limits_{j=0}^{p-1}\zeta_p^{(j^2-c)}, & {\rm if } \ p\equiv 1\pmod 4, N=a^2p, \\
\end{array}
\right .
\]
}
where $\eta$ is the quadratic multiplicative character of $\mathbb{Z}_p$.
Then we have
{
\[
\begin{array}{l}
\left\{\begin{array}{ll}
\vspace{0.1cm} \chi\left(D-ag^{p-c}-b\sum\limits_{j=0}^{p-1}g^{(j^2-c)}\right)=0, & {\rm  if}  \ p\equiv 3\pmod 4,\ N=a^2+pb^2, \\
\vspace{0.1cm} \chi\left(D-ag^{p-c}\right)=\chi(D)-a\zeta_{p}^{-c}=0, & {\rm if } \ p\equiv 1\pmod 4,\ N=a^2,\ \\
\chi\left(D-a\sum\limits_{j=0}^{p-1}g^{(j^2-c)} \right)=\chi(D)-a\sum\limits_{j=0}^{p-1}\zeta_p^{(j^2-c)} =0, & {\rm if } \ p\equiv 1\pmod 4,\  N=a^2p. 
\end{array}
\right .
\end{array}
\]
}
By Lemma \ref{KN}, there exist three elements $Y_1, Y_2, Y_3 \in \mathbb{Z}[H]$ with nonnegative coefficients such that
{\small
\[
\begin{array}{l}
\left\{\begin{array}{ll}
\vspace{0.1cm} D-ag^{p-c}-b\sum\limits_{j=0}^{p-1}g^{(j^2-c)}= \sum\limits_{j=0}^{p-1}a_jg^j-ag^{p-c}-b\sum\limits_{j=0}^{p-1}g^{(j^2-c)} =Y_1H, & {\rm  if}  \ p\equiv 3\pmod 4,\ N=a^2+pb^2,  \\
\vspace{0.1cm}D-ag^{p-c}=\sum\limits_{j=0}^{p-1}a_jg^j-ag^{p-c}=Y_2H, & {\rm if } \ p\equiv 1\pmod 4,\ N=a^2\\
D-a\sum\limits_{j=0}^{p-1}g^{(j^2-c)}=\sum\limits_{j=0}^{p-1}a_jg^j-a\sum\limits_{j=0}^{p-1}g^{(j^2-c)} =Y_3H, & {\rm if } \ p\equiv 1\pmod 4,\ N=a^2p.\\
\end{array}
\right .
\end{array}
\]}
Similarly, by Lemma \ref{IB} with $\sum_{j=0}^{p-1}a_j=k$ and $a_j\geq 0$, we have
\[
\begin{array}{l}
\left\{\begin{array}{ll}
\vspace{0.1cm} p\mid (k-a) \ {\rm and}\ k\geq \max\{a+pb, a-pb, a(1-p)\}, & {\rm  if}  \ p\equiv 3\pmod 4,\ N=a^2+pb^2,  \\
\vspace{0.1cm} p\mid \left(k-a\right) \ {\rm and}\ k\geq \max\left\{a, a(1-p)\right\}, & {\rm if } \ p\equiv 1\pmod 4,\ N=a^2, \\
 p\mid k \ {\rm and}\ k\geq \max\left\{ap, \ -ap\right\}, & {\rm if } \ p\equiv 1\pmod 4,\  N=a^2p.
\end{array}
\right .
\end{array}
\]
This completes the proof. \qed
\end{proof}

We now illustrate the use of Theorem \ref{pfK} to rule out the existence of a $(1540,77,18,16)$-SEDF, a $(1701,35,30,18)$-SEDF and a $(2376, 11, 190, 152)$-SEDF.

\begin{example}
There does not exist a $(1540,77,18,16)$-SEDF in any abelian group $G$.
\end{example}
\begin{proof}
Suppose, for a contradiction, that $\{D_1,\ldots, D_{77}\}$ is a $(1540,77,18,16)$-SEDF in $G$ and set $D=\bigcup\limits_{i=1}^{77} D_i$. Let $H$ be a subgroup of $G$ of order $11$ and set $H=\langle g\rangle$. Let  $\sigma: G \rightarrow H$ be the natural projection and let $\chi$ be a character of $H$ of order $11$. 

If $\chi\big{(}\sigma(D)\big{)}=0$, by Equation (\ref{Dj1}) we have $\chi\big{(}\sigma(D_i)\big{)}\overline{\chi\big{(}\sigma(D_i)\big{)}}=16.$ Since
$\sigma(D)=\sum\limits_{i=0}^{10}a_ig^i\in \mathbb{Z}[H]$ with $a_i\geq 0$, applying Theorem \ref{pfK} with $p=11$, $N=16$ and $k=18$, we obtain $t=-4$ and $k=18\geq t(1-p)=40$, a contradiction. Then we have $\chi\big{(}\sigma(D)\big{)}\neq 0.$ By Theorem \ref{LJ20191},
$$(a_{\chi}, b_{\chi})\in \{(1,3),\ (3,5) \}. $$

For $(a_{\chi}, b_{\chi})=(1,3)$, by Equation (\ref{AX}) we have $\chi\big{(}\sigma(D_{l})\big{)}\overline{\chi\big{(}\sigma(D_{l})\big{)}}= \frac{(b_{\chi}-a_{\chi})\lambda}{b_{\chi}+a_{\chi}}=8$ for some integer $l$. By applying Theorem \ref{pfK} with $p=11, q_1=2$ and $f=10$, we have $N=8$ is a square integer, a contradiction.

For $(a_{\chi}, b_{\chi})=(3,5)$, by Equation (\ref{AX}) we have $\chi\big{(}\sigma(D_{l})\big{)}\overline{\chi\big{(}\sigma(D_{l})\big{)}}= \frac{(b_{\chi}-a_{\chi})\lambda}{b_{\chi}+a_{\chi}}=4$ for some integer $l$. By applying Theorem \ref{pfK} with $p=11, q_1=2$ and $f=10$, we have $t=\pm 2$ and $11\mid (18\pm 2)$. This again leads to a contradiction. \qed
\end{proof}

\begin{example}
There does not exist a $(1701,35,30,18)$-SEDF in any abelian group $G$.
\end{example}
\begin{proof}
Suppose, for a contradiction, that $\{D_1,\ldots, D_{35}\}$ is a $(1701,35,30,18)$-SEDF in $G$ and set $D=\bigcup\limits_{i=1}^{35} D_i$. Let $H$ be a subgroup of $G$ of order $7$ and set $H=\langle g\rangle$. Let $\sigma: G \rightarrow H$ be the natural projection and let $\chi$ be a character of $H$ of order $7$.

If $\chi\big{(}\sigma(D)\big{)}=0$, by Equation (\ref{Dj1}) we would have $\chi\big{(}\sigma(D_i)\big{)}\overline{\chi\big{(}\sigma(D_i)\big{)}}=18.$ Since
$\sigma(D)=\sum\limits_{i=0}^{6}a_ig^i\in \mathbb{Z}[H]$ with $a_i\geq 0$, applying Theorem \ref{pfK} with $p=7$, $N=18$ and $f=3$, we obtain $(a,b)$ is a $\mathbb{Z}$-solution of the equation $a^2+ 7b^2=18$, a contradiction. Then we have $\chi\big{(}\sigma(D)\big{)}\neq 0.$  The character $\chi$ of $H$ can be extend to a character $\phi$ of $G$ of order $7$ such that $\phi(D)=\chi\big{(}\sigma(D)\big{)}\neq 0.$ From the proof of Theorem {\rm\ref{k(m-2)1}}, we have $p\mid (m-2)k$, that is, $7\mid 33\times 30.$ This again leads to a contradiction. \qed
\end{proof}

\begin{example}
There does not exist a $(2376, 11, 190, 152)$-SEDF in any abelian group $G$.
\end{example}
\begin{proof}
Suppose, for a contradiction, that $\{D_1,\ldots, D_{11}\}$ is a $(2376, 11, 190, 152)$-SEDF in $G$ and set $D=\bigcup\limits_{i=1}^{11} D_i$. Let $H$ be a subgroup of $G$ of order $11$ and set $H=\langle g\rangle$. Let $\sigma: G \rightarrow H$ be the natural projection and let $\chi$ be a character of $H$ of order $11$.

If $\chi\big{(}\sigma(D)\big{)}=0$, by Equation (\ref{Dj1}) we would have $\chi\big{(}\sigma(D_i)\big{)}\overline{\chi\big{(}\sigma(D_i)\big{)}}=152.$ Since
$\sigma(D)=\sum\limits_{i=0}^{10}a_ig^i\in \mathbb{Z}[H]$ with $a_i\geq 0$, applying Theorem \ref{pfK} with $p=11$, $N=152$ and $f=10$, we obtain $(a,b)$ is a $\mathbb{Z}$-solution of the equation $a^2+ 11b^2=152$. We get a contradiction. Then we have $\chi\big{(}\sigma(D)\big{)}\neq 0.$ By Theorem \ref{LJ20191},
$$(a_{\chi}, b_{\chi})\in \{(1,3),\ (7,9) \}. $$

For $(a_{\chi}, b_{\chi})=(1,3)$, by Equation (\ref{AX}) we have $\chi\big{(}\sigma(D_{l})\big{)}\overline{\chi\big{(}\sigma(D_{l})\big{)}}= \frac{(b_{\chi}-a_{\chi})\lambda}{b_{\chi}+a_{\chi}}=76$. By applying Theorem \ref{pfK} with $p=11$ and $f=10$, we obtain $(a,b)$ is a $\mathbb{Z}$-solution of the equation $a^2+ 11b^2=76$. This leads to a contradiction.

For $(a_{\chi}, b_{\chi})=(7,9)$, by Equation (\ref{AX}) we have $\chi\big{(}\sigma(D_{l})\big{)}\overline{\chi\big{(}\sigma(D_{l})\big{)}}= \frac{(b_{\chi}-a_{\chi})\lambda}{b_{\chi}+a_{\chi}}=19$. By applying Theorem \ref{pfK} with $p=11$ and $f=10$, we obtain $(a,b)$ is a $\mathbb{Z}$-solution of the equation $a^2+ 11b^2=19$. This again leads to a contradiction. \qed
\end{proof}

\begin{remark}
Theorems {\rm\ref{k(m-2)1}}, {\rm\ref{pf}} and {\rm\ref{pfK}} rule out parameter sets of $(v,m,k,\lambda)$-SEDF in any abelian group $G$ which are not excluded by Jedwab and Li {\rm\cite{JL2018}}, including
\[
\begin{array}{l}
 \vspace{0.2cm}(v,m,k,\lambda)\in \{(1540, 77, 18, 16), (1701, 35, 30, 18), (2376, 11, 190, 152), (2500, 35, 42, 24),\\
 \vspace{0.2cm}  \hspace{2.2cm}(2784, 116, 22, 20), (3381, 23, 130, 110), (4564, 163, 26, 24), (4625, 37, 68, 36), \\
  \vspace{0.2cm} \hspace{2.2cm} (5888, 92, 58, 52), (6400, 80, 54, 36), (6976, 218, 30, 28), (8625, 23, 140, 50), \\
\hspace{2.2cm} (8625, 23, 280, 200), (8960, 7, 1054, 744), (9801, 101, 70, 50)  \}.\\
\end{array}
\]

\end{remark}


\section{An exponent bound obtained by Schmidt's theorem}
In this section, we use Schmidt's theorem to get an exponent bound. We now come to an important method due to Schmidt \cite{S1999}, which concerns the solutions $\gamma$ in $\mathbb{Z}[\zeta_M]$ of the norm equation $\gamma \cdot \overline{\gamma}=N$ for a positive integer $N$. Schmidt proved that if $M$ has square divisors, then very often all solutions are of the form $\zeta^i_M \beta$, where $\beta\in \mathbb{Z}[\zeta_{F(M,N)}]$, $F(M,N)$ has the same prime divisors as $M$. Firstly, we introduce the following notation.
\begin{definition}{\rm \cite{S1999}}
Let $M,N$ be positive integers, and let $M=\prod\limits_{i=1}^{t}p_i^{c_i}$ be the prime power decomposition of $M$. For each prime divisor $q$ of $N$, let
\[
\begin{array}{l}
m_q=
\left\{\begin{array}{ll}
\prod\limits_{p_i\not=q}p_i,  & {\rm  if}  \ M {\rm \ is\ odd\ or\ } q=2,\  \\
4\prod\limits_{p_i\not=2, q}p_i,  &  {\rm \ otherwise. \ }
\end{array}
\right .
\end{array}
\]
Let $D(N)$ be the set of prime divisors of $N$. Next define the $F(M,N)=\prod\limits_{i=1}^tp_i^{b_i}$ to be the minimum multiple of $\prod\limits_{i=1}^tp_i$ such that for every pair $(i, q),\ i\in \{1,2,\ldots,t\},\ q \in D(N)$, at least one of the following conditions is satisfied:
\[
\begin{array}{l}
\\
\hspace{-7cm}\vspace{0.1cm}(1)\ q =p_i {\rm \ and}\  (p_i, b_i) \not= (2, 1),\\
\hspace{-7cm}\vspace{0.1cm}(2)\ b_i=c_i, \\
\hspace{-7cm}\vspace{0.1cm}(3)\ q \not= p_i {\rm \ and}\  q^{\text{ord}_{m_q} (q)}\not\equiv 1 \pmod {p_i^{b_i+1}}.
\end{array}
\]
\end{definition}

\begin{remark}
When $N=1$, we have $F(M,1)=1$.
\end{remark}

 We need the following important Schmidt's theorem.



\begin{theorem}\label{F(m,n)2}{ \rm \cite{S1999}}
Let $X\in \mathbb{Z}[\zeta_M]$ be of the form $$X=\sum\limits_{i=0}^{M-1}a_i\zeta_M^i$$
 with $0\leq a_i \leq C$ for some constant $C$ and assume that $X\cdot \overline{X}=N$ is an integer. Then
$$N \leq \frac{C^2\cdot F(M,N)^2}{4\cdot \varphi (F(M,N))},$$
where $\varphi$ is Euler function. 
\end{theorem}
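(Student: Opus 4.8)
The plan is to reduce the problem to the much smaller cyclotomic field $\mathbb{Q}(\zeta_F)$ with $F=F(M,N)$ by Schmidt's field descent, and then to extract the numerical bound from a character-sum (discrete Fourier) computation followed by an elementary optimization. Recall first two structural facts about $F=F(M,N)$ that I will use repeatedly: by construction $F$ is a multiple of the radical $\prod_{i=1}^{t}p_i$ of $M$ and $F\mid M$, so $F$ and $M$ have \emph{exactly} the same set of prime divisors. I will treat the nondegenerate case $F>1$ (the case $F=1$ forces $N=1$ and is immediate).

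The first and deepest step is the descent itself: there is an integer $j$ with $Y:=\zeta_M^{j}X\in\mathbb{Z}[\zeta_F]$. I expect this to be the main obstacle. The three conditions defining $F(M,N)$ are engineered precisely to force this. Concretely, for each prime divisor $q$ of $N$ one analyzes the factorization of $(q)$ in $\mathbb{Z}[\zeta_M]$ and the decomposition groups of the prime ideals involved (this is where Theorems \ref{g-1} and \ref{g-2} together with Lemma \ref{BOOK2} enter); the constraint $X\overline{X}=N$ then pins down which prime ideals can divide $(X)$ and how the Galois group permutes them, and one deduces that $\zeta_M^{j}X$ is fixed by the subgroup cutting out $\mathbb{Q}(\zeta_F)$ for a suitable $j$. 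The minimality built into the definition makes $F$ the smallest conductor for which this argument closes.

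Granting the descent, I would control the coefficients of $Y$. Multiplication of $X=\sum_{i=0}^{M-1}a_i\zeta_M^i$ by $\zeta_M^{j}$ only permutes the coefficients cyclically, so $Y=\sum_{i=0}^{M-1}a'_i\zeta_M^i$ still satisfies $0\le a'_i\le C$; also $Y\overline{Y}=X\overline{X}=N$ because $\zeta_M^{j}\overline{\zeta_M^{j}}=1$. Since $F$ and $M$ share their prime divisors, the case $t=s$ of Lemma \ref{BSC}(a) (applied with $k=F$) is available, giving coefficients in $[0,C]$; and because $Y$ already lies in $\mathbb{Z}[\zeta_F]$, uniqueness of the basis representation over $B_{M,F}$ (which contains $1$) forces all basis elements except $1$ to drop out. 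Hence $Y=\sum_{i=0}^{F-1}b_i\zeta_F^i$ with $0\le b_i\le C$.

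Finally I would run the Fourier/positivity step. As $N\in\mathbb{Q}$ and the extension is abelian, every Galois conjugate satisfies $|Y(\zeta_F^{t})|^2=N$ for $\gcd(t,F)=1$, so each of these $\varphi(F)$ terms equals $N$. Summing over all residues and using orthogonality of characters,
\[
\sum_{t=0}^{F-1}\bigl|Y(\zeta_F^{t})\bigr|^2=F\sum_{i=0}^{F-1}b_i^2 .
\]
Isolating the term $t=0$, equal to $\bigl(\sum_i b_i\bigr)^2=:S^2$, and the $\varphi(F)$ coprime terms, and discarding the remaining nonnegative terms, gives
\[
\varphi(F)\,N\le F\sum_{i=0}^{F-1}b_i^2-S^2 .
\]
It remains to maximize $F\sum_i b_i^2-\bigl(\sum_i b_i\bigr)^2$ over the box $0\le b_i\le C$. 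This objective is convex in each $b_i$, so its maximum is attained at a vertex where every $b_i\in\{0,C\}$; if $r$ of them equal $C$ the value is $C^2 r(F-r)\le C^2F^2/4$, with equality near $r=F/2$. Combining, $\varphi(F)N\le C^2F^2/4$, that is $N\le \dfrac{C^2F(M,N)^2}{4\varphi(F(M,N))}$. The genuinely hard part is the field descent of the first step; the coefficient bookkeeping and the character-sum optimization are elementary once the descent is in hand.
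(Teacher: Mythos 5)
The paper offers no proof of this statement at all: Theorem \ref{F(m,n)2} is quoted verbatim from Schmidt \cite{S1999}, and the only ``proof'' in the paper is the citation (the preamble of Section 5 even states, as known, the descent result you grant: every solution of $\gamma\overline{\gamma}=N$ in $\mathbb{Z}[\zeta_M]$ has the form $\zeta_M^i\beta$ with $\beta\in\mathbb{Z}[\zeta_{F(M,N)}]$). Your reconstruction follows Schmidt's original argument exactly, and everything downstream of the descent checks out: multiplication by $\zeta_M^{j}$ merely permutes the coefficient vector, since it permutes the set $\{\zeta_M^i\colon 0\le i\le M-1\}$; because $F=F(M,N)$ divides $M$ and shares all its prime divisors, Lemma \ref{BSC}(a) applies in the case $t=s$ with bound $0\le c_{xj}\le C$, and uniqueness of the representation over the integral basis $B_{M,F}$ (which contains $1$) annihilates all other basis components, so $Y=\sum_{i=0}^{F-1}b_i\zeta_F^i$ with $0\le b_i\le C$; the Parseval identity $\sum_{t=0}^{F-1}|Y(\zeta_F^t)|^2=F\sum_i b_i^2$, the fact that each of the $\varphi(F)$ terms with $\gcd(t,F)=1$ equals $N$ (complex conjugation commutes with $\sigma_t$ in the abelian extension, so $\sigma_t(Y)\overline{\sigma_t(Y)}=\sigma_t(N)=N$), and the vertex bound $F\sum_i b_i^2-S^2\le C^2r(F-r)\le C^2F^2/4$ together yield the claim. (An even quicker finish: $\sum_i b_i^2\le CS$ gives $F\sum_i b_i^2-S^2\le S(FC-S)\le C^2F^2/4$ directly.)

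Two caveats. First, the step you defer is not a side lemma but the main theorem of \cite{S1999}; your sketch (prime-ideal factorizations and decomposition groups) correctly names the ingredients but omits the actual mechanism: one shows $(X^\sigma)=(X)$ for $\sigma$ generating $\mathrm{Gal}(\mathbb{Q}(\zeta_M)/\mathbb{Q}(\zeta_F))$, concludes via Kronecker's theorem that $X^\sigma=\zeta X$ for a root of unity $\zeta$, and must then solve $\zeta_M^{j(t-1)}=\zeta^{-1}$ for $j$ where $\sigma(\zeta_M)=\zeta_M^t$ --- the solvability of this last congruence is precisely where the three conditions defining $F(M,N)$, including $(p_i,b_i)\ne(2,1)$, are forced. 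As a citation this is entirely acceptable (the paper under review does no more), but be aware it is the whole difficulty. Second, your aside that ``$F=1$ forces $N=1$ and is immediate'' is off: under the definition as written, $F$ is a multiple of the radical of $M$, so $F=1$ only when $M=1$, where $X=a_0$ gives $N=a_0^2$ up to $C^2$ while the stated bound reads $C^2/4$ --- i.e.\ the theorem tacitly requires $M\ge 2$. This is harmless here, since the paper only invokes the bound with $d>2$, but the degenerate case should be excluded rather than dispatched.
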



By using Theorem \ref{F(m,n)2}, we obtain the following exponent bound.

\begin{theorem}\label{F(m,n)1}
Suppose there exists a $(v,m, k, \lambda)$-SEDF in $G$ with $m\geq 3$. Let $d$ be an integer with $t$ distinct prime divisors such that $2<d$ and $d\mid \text{exp}(G)$. Then
$$\lambda \leq \frac{(2^{t-1}v)^2\cdot F(d,\lambda)^2}{4d^2\cdot \varphi(F(d,\lambda))}.$$
\end{theorem}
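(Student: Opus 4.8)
The plan is to exhibit, for the order-$d$ character promised by $d\mid\text{exp}(G)$, a cyclotomic integer $X\in\mathbb{Z}[\zeta_d]$ whose coefficients in the powers of $\zeta_d$ are non-negative and bounded by $2^{t-1}v/d$, whose norm $X\overline{X}$ is a positive integer that is at least $\lambda$, and all of whose prime divisors already divide $\lambda$; Theorem~\ref{F(m,n)2} together with a monotonicity argument then finishes the proof. First I would fix a character $\chi\in\widehat{G}$ of order exactly $d$ (such a character exists because $G$ is abelian and $d\mid\text{exp}(G)$), and factor it as $\chi=\widehat{\chi}\circ\rho$, where $\rho\colon G\to G/\ker\chi$ is the quotient onto the cyclic group of order $d$ and $\widehat\chi$ sends a fixed generator $\bar g$ to $\zeta_d$. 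For any subset $S\subseteq G$ the image $\rho(S)=\sum_{i=0}^{d-1}c_i\bar g^{\,i}$ has $0\le c_i\le |\ker\chi|=v/d$, so $\chi(S)=\sum_{i=0}^{d-1}c_i\zeta_d^i$ with $0\le c_i\le v/d\le 2^{t-1}v/d$. Thus $\chi(D_j)$ and $\chi(D\setminus D_j)$ are both in the form required by Theorem~\ref{F(m,n)2}, with $C=2^{t-1}v/d$ and $M=d$.

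Next I would produce the element $X$ of norm at least $\lambda$. Writing $D=\bigcup_{i=1}^m D_i$ and applying $\chi$ to the defining identity gives Equation~(\ref{Dj1}) in the form $\chi(D_j)\overline{\chi(D\setminus D_j)}=-\lambda$ for every $j$. \textbf{If $\chi(D)=0$}, then $\chi(D\setminus D_j)=-\chi(D_j)$ and (\ref{Dj1}) yields $|\chi(D_j)|^2=\lambda$; here I take $X=\chi(D_j)$, so that $X\overline{X}=\lambda$ exactly. \textbf{If $\chi(D)\neq 0$}, then by Lemma~\ref{integer1} both $N_j:=|\chi(D_j)|^2$ and $M_j:=|\chi(D\setminus D_j)|^2$ are positive integers, and taking absolute values in (\ref{Dj1}) gives $N_jM_j=\lambda^2$. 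Consequently $\max(N_j,M_j)\ge\lambda$, and I take $X$ to be whichever of $\chi(D_j)$, $\chi(D\setminus D_j)$ realises this maximum; by the previous paragraph $X$ still has non-negative coefficients bounded by $2^{t-1}v/d$, and $X\overline{X}$ is a positive integer that is $\ge\lambda$.

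The crucial observation is that in both cases every prime dividing $N:=X\overline{X}$ divides $\lambda$: this is clear when $N=\lambda$, and when $\chi(D)\neq0$ it follows from $N_jM_j=\lambda^2$, since any prime dividing $N_j$ or $M_j$ divides $\lambda^2$ and hence $\lambda$. Therefore the set $D(N)$ of prime divisors of $N$ is contained in that of $\lambda$. I would then record two monotonicity facts: first, directly from the definition of $F(d,\cdot)$, enlarging the set of prime divisors only adds constraints, so each exponent $b_i$ can only grow and hence $F(d,N)\mid F(d,\lambda)$; second, $x\mapsto x^2/\varphi(x)=x\prod_{p\mid x}\frac{p}{p-1}$ is non-decreasing under divisibility. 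Applying Theorem~\ref{F(m,n)2} to $X$ with $C=2^{t-1}v/d$ then gives
\[
\lambda\le N=X\overline{X}\le\frac{(2^{t-1}v/d)^2\,F(d,N)^2}{4\,\varphi(F(d,N))}\le\frac{(2^{t-1}v)^2\,F(d,\lambda)^2}{4d^2\,\varphi(F(d,\lambda))},
\]
which is the desired inequality.

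The only real obstacle is the case $\chi(D)\neq 0$, in which no single $\chi(D_j)$ has norm equal to $\lambda$: the norms that occur are $\tfrac{(b_\chi+a_\chi)\lambda}{b_\chi-a_\chi}$ and $\tfrac{(b_\chi-a_\chi)\lambda}{b_\chi+a_\chi}$, which involve the auxiliary primes hidden in $b_\chi\pm a_\chi$. The identity $N_jM_j=\lambda^2$ is exactly what guarantees that the prime divisors of the norm we actually use remain confined to those of $\lambda$, so that $F(d,N)$ may legitimately be replaced by $F(d,\lambda)$; everything else — the coefficient bound and the final substitution — is routine. I note in passing that the argument in fact yields the stronger bound with $2^{t-1}$ replaced by $1$, the stated factor $2^{t-1}$ merely reflecting the coarser coefficient estimate of Corollary~\ref{c1}.
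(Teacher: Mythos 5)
Your proof is correct, and its skeleton is the same as the paper's: fix a character $\chi$ of order $d$, split on whether $\chi(D)$ vanishes, feed a suitable element of $\mathbb{Z}[\zeta_d]$ into Theorem~\ref{F(m,n)2}, and pass from $F(d,N)$ to $F(d,\lambda)$ by monotonicity. You diverge in two worthwhile ways. First, in the case $\chi(D)\neq 0$ the paper sets $A=\chi(D_j)\overline{\chi(D_j)}$, applies Schmidt's bound separately to $A$ and to $\chi(D\setminus D_j)\overline{\chi(D\setminus D_j)}=\lambda^2/A$, multiplies the two inequalities, and is then left to evaluate a maximum of $F(d,A)F(d,\lambda^2/A)\big/\sqrt{\varphi(F(d,A))\,\varphi(F(d,\lambda^2/A))}$ over all divisors $A$ of $\lambda^2$, using the same two monotonicity facts you record; your observation that $N_jM_j=\lambda^2$ forces $\max(N_j,M_j)\geq\lambda$ lets you bound a single norm and short-circuits that maximization entirely, while the identity $N_jM_j=\lambda^2$ still confines the prime divisors of the chosen norm to those of $\lambda$, which is all the transfer $F(d,N)\mid F(d,\lambda)$ requires (note $F(d,\cdot)$ depends only on the set of prime divisors, so this is the same fact the paper invokes as $F(M,N')\mid F(M,N)$ for $N'\mid N$). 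Second, the paper gets its coefficient bound from Corollary~\ref{c1}, which is where the factor $2^{t-1}$ enters and which produces signed coefficients $|c_x|\leq 2^{t-1}v/d$ in the integral basis $B_{d,1}$, whereas your factorization $\chi=\widehat{\chi}\circ\rho$ through the cyclic quotient of order $d$ gives $\chi(S)=\sum_{i=0}^{d-1}c_i\zeta_d^i$ with $0\leq c_i\leq v/d$ — which matches the hypothesis $0\leq a_i\leq C$ of Theorem~\ref{F(m,n)2} as quoted more literally than the paper's own route does. So your closing remark is justified: your argument actually proves the sharper bound with $2^{t-1}$ replaced by $1$, the stated inequality following a fortiori.
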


\begin{proof} Let $\{D_1,\ldots, D_m\}$ be a $(v,m, k, \lambda)$-SEDF and set $D=\bigcup\limits_{i=1}^m D_i$. Since $d\mid \text{exp}(G)$, there exists a nonprincipal character $\chi$ of $G$ of order $d$. We distinguish two cases.

{\bf Case (i)} $\chi(D)=0$.

By Equation (\ref{Dj1}), we have $\chi(D_{j})\overline{\chi(D_{j})}=\lambda$. By Theorem \ref{F(m,n)2} and Corollary \ref{c1}, we obtain
$$\lambda \leq \frac{(2^{t-1}\frac{v}{d})^2\cdot F(d,\lambda)^2}{4\cdot \varphi(F(d,\lambda))}.$$

{\bf Case (ii)} $\chi(D)\neq 0$.

Set $\chi(D_{j})\overline{\chi(D_{j})}=A$. Since $m\geq 3$, by Lemma \ref{integer1} we have $A$ is a positive integer and $A\mid \lambda^2$. Then it holds that  $\chi(D-D_{j})\overline{\chi(D-D_{j})}=\frac{\lambda^2}{A}$. By Theorem \ref{F(m,n)2} and Corollary \ref{c1}, we obtain
$$A\leq \frac{(2^{t-1}\frac{v}{d})^2\cdot F(d,A)^2}{4\cdot \varphi(F(d,A))}\ {\rm and}\ \frac{\lambda^2}{A}\leq \frac{(2^{t-1}\frac{v}{d})^2\cdot F\left(d,\frac{\lambda^2}{A}\right)^2}{4\cdot \varphi\left(F\left(d,\frac{\lambda^2}{A}\right)\right)}.$$
Therefore, we obtain
$$\lambda \leq  \max\limits_{A,A\mid \lambda^2} \frac{(2^{t-1}v)^2\cdot F(d,A)F\left(d,\frac{\lambda^2}{A}\right)}{4d^2\sqrt{\varphi(F(d,A))\varphi\left(F\left(d,\frac{\lambda^2}{A}\right)\right)}}.$$
From the definition of $F(M,N)$, we know that $F(M,N')\mid F(M,N)$ for positive integer $N, N'$ with $N'\mid N$. It is easy to verify that $\frac{p^aN}{\sqrt{\varphi(p^aN)}}\leq \frac{p^bN}{\sqrt{\varphi(p^bN)}}$ for any prime $p$ and positive integers $N, a, b$ with $a\leq b$. Then we have 
$$\max\limits_{A,A\mid \lambda^2} \frac{(2^{t-1}v)^2\cdot F(d,A)F\left(d,\frac{\lambda^2}{A}\right)}{4d^2\sqrt{\varphi(F(d,A))\varphi\left(F\left(d,\frac{\lambda^2}{A}\right)\right)}}
=\frac{(2^{t-1}v)^2\cdot (F(d,\lambda))^2}{4d^2\varphi(F(d,\lambda))}.$$
Hence, 
$$\lambda \leq \frac{(2^{t-1}v)^2\cdot (F(d,\lambda))^2}{4d^2\cdot\varphi(F(d,\lambda))}.$$
This completes the proof. \qed
\end{proof}

By directly applying Theorem \ref{F(m,n)1}, we have obtained the following nonexistence results for SEDFs.
\begin{corollary}\label{FMN12}
Let $p$ be a prime and let $n, m,\lambda$ be three positive integers such that $m>2$. If $\lambda >\frac{p^2\cdot  (F(p^{n-1},\lambda))^2}{4\cdot\varphi(F(p^{n-1},\lambda))}$, then a $(p^n, m, k, \lambda)$-SEDF does not exist in any abelian group $G$ with $p^{n-1}\mid \text{exp}(G)$. 
\end{corollary}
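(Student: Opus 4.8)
The plan is to obtain this statement as an immediate specialization of Theorem~\ref{F(m,n)1}, so I would argue by contradiction. Suppose that, contrary to the claim, a $(p^n,m,k,\lambda)$-SEDF exists in some abelian group $G$ with $p^{n-1}\mid \text{exp}(G)$ and $m>2$. The idea is to apply Theorem~\ref{F(m,n)1} to this SEDF using the distinguished divisor $d=p^{n-1}$ of $\text{exp}(G)$ together with $v=p^n$.

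First I would verify the hypotheses of Theorem~\ref{F(m,n)1}: we have $m\geq 3$ and $d=p^{n-1}\mid \text{exp}(G)$, and in the relevant range $d=p^{n-1}>2$. The crucial structural observation is that $d=p^{n-1}$ is a prime power, hence has exactly one distinct prime divisor; thus the count $t$ of distinct prime divisors of $d$ appearing in Theorem~\ref{F(m,n)1} equals $1$, and the leading coefficient degenerates to $2^{t-1}=2^{0}=1$. Substituting $v=p^n$, $d=p^{n-1}$ and $t=1$ into the conclusion of Theorem~\ref{F(m,n)1} gives
\[
\lambda \leq \frac{(p^n)^2\cdot F(p^{n-1},\lambda)^2}{4\,(p^{n-1})^2\cdot \varphi(F(p^{n-1},\lambda))}.
\]
The only remaining step is the routine cancellation $(p^n)^2/(p^{n-1})^2=p^2$, which collapses the right-hand side to $\frac{p^2\cdot F(p^{n-1},\lambda)^2}{4\cdot \varphi(F(p^{n-1},\lambda))}$. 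This contradicts the standing assumption $\lambda > \frac{p^2\cdot F(p^{n-1},\lambda)^2}{4\cdot \varphi(F(p^{n-1},\lambda))}$, and the corollary follows.

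I do not expect a genuine obstacle: the entire content is the correct bookkeeping of the two features particular to this case, namely that the single-prime structure of $d$ forces $t=1$ and that $v/d$ telescopes to the single factor $p$. The one point worth a clarifying remark is the boundary situation $p^{n-1}\leq 2$ (that is, $n=1$, or $p=2$ with $n=2$), in which the requirement $2<d$ of Theorem~\ref{F(m,n)1} is not met; the corollary is intended for the range $p^{n-1}>2$, and the degenerate parameters falling outside it are either incompatible with the hypothesis on $\lambda$ or are already excluded by earlier results such as Lemma~\ref{t.stinson}.
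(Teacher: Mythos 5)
Your proposal is correct and is precisely the paper's (implicit) argument: the paper derives Corollary~\ref{FMN12} by directly applying Theorem~\ref{F(m,n)1} with $v=p^n$, $d=p^{n-1}$, and $t=1$, so that $2^{t-1}=1$ and $v^2/d^2=p^2$, exactly as you do. Your closing remark handling the boundary cases $p^{n-1}\leq 2$ (via Lemma~\ref{t.stinson} and incompatibility with the hypothesis on $\lambda$) is a careful addition the paper leaves unstated.
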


We now illustrate the use of Theorem \ref{F(m,n)1} to rule out the existence of SEDFs in abelian groups.

\begin{example}
There does not exist a $(2401, 9, 180, 108)$-SEDF in any abelian group $G$ with $7^3 \mid \text{exp}(G)$. 
\end{example}
\begin{proof}
Suppose, for a contradiction, that there exists  a $(2401, 9, 180, 108)$-SEDF in an abelian group $G$ with $7^3 \mid \text{exp}(G)$.

Firstly, we compute $F(343, 108)$. By the definition of $F(M,N)$, we have $c_1=4, m_2=7$ and $m_3=7$. If $b_1 < 4$, then $(7, b_1) \neq (2, 1)$, $2^{\text{ord}_7(2)}=2^3\not \equiv 1 \pmod {7^{b_1+1}}$ and $3^{\text{ord}_7(3)}=3^6\not \equiv 1 \pmod {7^{b_1+1}}$. Thus $b_1=1$. So $F(343, 108)=7$.  

 Secondly, we compute
 $$\frac{(2^{t-1}v)^2\cdot (F(d,\lambda))^2}{4d^2\cdot \varphi(F(d,\lambda))}.$$
 Here $v=2401, d=343$, $t=1$ and $\lambda=108$. Then we have $F(d,\lambda)=7$. Applying Theorem \ref{F(m,n)1}, we have
 $$108=\lambda \leq  \frac{(2^{t-1}v)^2\cdot (F(d,\lambda))^2}{4d^2\cdot \varphi(F(d,\lambda))}=\frac{v^2 \times 7\times 7}{4 d^2\times \varphi(7)}=\frac{49v^2}{24d^2},$$
 that is, $v^2\geq 52.9d^2$, a contradiction. \qed
\end{proof}

\begin{example}
There does not exist a $(4096,14,210,140)$-SEDF in any abelian group $G$ with $2^{10}\mid \text{exp}(G)$. 
\end{example}
\begin{proof}
Suppose, for a contradiction, that there exists a $(4096,14,210,140)$-SEDF in an abelian group $G$ with $2^{10}\mid \text{exp}(G)$.

Firstly, we compute $F(1024, 140)$. By the definition of $F(M,N)$, we have $c_1=12, m_5=4, m_7=4$. If $b_1 < 12$, then we have $(2, b_1) \neq (2, 1)$,  $5^{\text{ord}_4(5)}=5 \not \equiv 1 \pmod {2^{b_1+1}}$ and $7^{\text{ord}_4(7)}=7^2\not \equiv 1 \pmod {2^{b_1+1}}$. Thus $b_1=4$. So $F(1024, 140)=2^4$. 

Secondly, we compute
$$\frac{(2^{t-1}v)^2\cdot (F(d,\lambda))^2}{4d^2\cdot \varphi(F(d,\lambda))}.$$
Here $v=4096, d=2^{10}$, $t=1$ and $\lambda=140$. Then we have $F(d,\lambda)=16$. Applying Theorem \ref{F(m,n)1}, we have
$$ \vspace{0.2cm} 140=\lambda \leq  \frac{(2^{t-1}v)^2\cdot (F(d,\lambda))^2}{4d^2\cdot \varphi(F(d,\lambda))} =\frac{v^2 \times 16\times 16}{4 d^2\times \varphi(16)}=\frac{8v^2}{d^2}, $$
that is, $v^2\geq 17.5d^2$, a contradiction. \qed
\end{proof}

\section{Conclusion}

In this paper, we used the results of decomposition of prime ideals, Schmidt's field descent method to obtain two exponent bounds. We also used the field descent method and Gauss sums to present some new necessary conditions for the existence of SEDFs, which yield new nonexistence results for certain types of SEDFs. These results generalize those  from \cite{BJWZ2018}, \cite{HP2018}, \cite{JL2018}, \cite{LLP2019} and \cite{MS-arXiv}.

Generalized SEDFs were also introduced in \cite{PS2016}. In 2018, Wen et al. \cite{WYFF2018} constructed a series of generalized SEDFs by using difference sets and cyclotomic classes. In the same year, Lu et al. \cite{LNC2018} gave the first recursive construction for generalized SEDFs and obtained some new generalized SEDFs for $m=2$. It is also interesting to study generalized SEDFs.


\end{document}